\renewcommand{\a }{\alpha }
\renewcommand{\b }{\beta }
\renewcommand{\d}{\delta }
\newcommand{\D }{\Delta }
\newcommand{\M}{\mathcal{M}}
\newcommand{\C}{\mathcal{C}}
\newcommand{\e }{\varepsilon }
\newcommand{\g }{\gamma}
\newcommand{\G }{\Gamma }
\renewcommand{\l }{\lambda }
\renewcommand{\L }{\Lambda }
\newcommand{\n }{\nabla }
\newcommand{\var }{\varphi }
\newcommand{\rh }{\rho }
\newcommand{\s }{\sigma }
\newcommand{\Sg }{\Sigma}
\newcommand{\Sig }{\Sigma}
\renewcommand{\th }{\theta }
\renewcommand{\O }{\Omega }
\newcommand{\ov}{\overline}
\newcommand{\wtilde }{\widetilde}
\newcommand{\be}{\begin{equation}}
\newcommand{\ee}{\end{equation}}
\newenvironment{pf}{\noindent{\sc Proof}.\enspace}{\rule{2mm}{2mm}\medskip}
\newtheorem{remark}{Remark}[section]
\newcommand{\R}{\mathbb{R}}
\renewcommand{\S}{\mathbb{S}}
\renewcommand{\P}{\mathbb{P}}
\newcommand{\N}{\mathbb{N}}
\newcommand{\no}{\noindent}
\newcommand{\dis}{\displaystyle}
\newcommand{\T}{\mathbb{T}}
\newcommand{\dkr}{{\bf d}}
\newtheorem{theorem}{Theorem}[section]
\newtheorem{proposition}[theorem]{Proposition}
\newtheorem{example}[theorem]{Example}
\newcommand{\bpr}{\begin{proposition}}
\newcommand{\epr}{\end{proposition}}
\newcommand{\bex}{\begin{example}\rm}
\newcommand{\eex}{\end{example}}
\def\bar#1{\mathcal B_{#1}}
\def\sj#1{\hbox{Sym}^{*{#1}}}
\def\sn{{\mathfrak S_n}}
\begin{document}

\newtheorem{lem}{Lemma}[section]
\newtheorem{pro}[lem]{Proposition}
\newtheorem{thm}[lem]{Theorem}
\newtheorem{rem}[lem]{Remark}
\newtheorem{cor}[lem]{Corollary}
\newtheorem{df}[lem]{Definition}

\title[A general existence result for the Toda system on compact surfaces] {A general existence result for the Toda system \\ on compact surfaces}

\author{Luca Battaglia$^{(1)}$, Aleks Jevnikar$^{(1)}$, Andrea Malchiodi$^{(2)}$, David Ruiz$^{(3)}$}

\address{$^{(1)}$ SISSA, via Bonomea 265, 34136 Trieste (Italy).}

\address{$^{(2)}$  Scuola Normale Superiore, Piazza dei Cavalieri 7, 56126 Pisa,   (Italy).}

\address{$^{(3)}$
Departamento de An\'alisis Matem\'atico, University of Granada,
18071 Granada (Spain).}

\thanks{L.B., A.J. and A.M. are supported by the FIRB project {\em
Analysis and Beyond} and by the PRIN {\em Variational Methods and
Nonlinear PDE's}. A. M. and D.R have been supported by the Spanish
Ministry of Science and Innovation under Grant MTM2011-26717. D.
R. has also been supported by J. Andalucia (FQM 116). L.B. and
A.J. acknowledge support from the Mathematics Department at the
University of Warwick.}

\

\email{lbatta@sissa.it, ajevnika@sissa.it, andrea.malchiodi@sns.it, daruiz@ugr.es}

\keywords{Geometric PDEs, Variational Methods, Min-max Schemes.}

\subjclass[2000]{35J50, 35J61, 35R01.}

\begin{abstract}

In this paper we consider the following {\em Toda system} of
equations on a compact surface:
$$ \left\{
    \begin{array}{ll}
      - \D u_1 = 2 \rho_1 \left( \frac{h_1 e^{u_1}}{\int_\Sg
    h_1 e^{u_1} dV_g} - 1 \right) - \rho_2 \left( \frac{h_2 e^{u_2}}{\int_\Sg
    h_2 e^{u_2} dV_g} - 1 \right) - 4 \pi \sum_{j=1}^{m} \a_{1,j} (\d_{p_j}-1), \\
     - \D u_2 = 2 \rho_2 \left( \frac{h_2 e^{u_2}}{\int_\Sg
    h_2 e^{u_2} dV_g} - 1 \right) - \rho_1 \left( \frac{h_1 e^{u_1}}{\int_\Sg
    h_1 e^{u_1} dV_g} - 1 \right) - 4 \pi \sum_{j=1}^{m} \a_{2,j} (\d_{p_j}-1), &
    \end{array}
  \right.$$
which is motivated by the study of models in non-abelian
Chern-Simons theory. Here $h_1, h_2$ are smooth positive
functions, $\rho_1, \rho_2$ two positive parameters, $p_i$ points
of the surface and $\alpha_{1,i}, \alpha_{2,j}$ non-negative
numbers. We prove a general  existence result using variational
methods.

The same analysis applies to the following mean field equation
$$
  - \D u = \rho_1 \left( \frac{h e^{u}}{\int_\Sg
      h e^{u} dV_g} - 1 \right) - \rho_2 \left( \frac{h e^{-u}}{\int_\Sg
      h e^{-u} dV_g} - 1 \right),
$$
which arises in fluid dynamics.
\end{abstract}

\maketitle

\vspace{-0.5cm}

\begin{center}
{\em With an appendix by Sadok Kallel (University of Lille 1)}
\end{center}

\section{Introduction}

\noindent The Toda system
\begin{equation}\label{eq:toda}
    -  \D u_i(x) = \sum_{j=1}^{N} a_{ij} e^{u_j(x)}, \qquad x \in \Sg, \ i = 1, \dots,
    N,
\end{equation}
where $\D$ is the Laplace operator and $A =
(a_{ij})_{ij}$ the {\em Cartan matrix} of $SU(N+1)$,
$$
  A = \left(
        \begin{array}{cccccc}
          2 & -1 & 0 & \dots & \dots & 0 \\
          -1 & 2 & -1 & 0 & \dots & 0 \\
          0 & -1 & 2 & -1 & \dots & 0 \\
          \dots & \dots & \dots & \dots & \dots & \dots \\
          0 & \dots & \dots & -1 & 2 & -1 \\
          0 & \dots & \dots & 0 & -1 & 2 \\
        \end{array}
      \right),
$$
plays an important role in geometry and mathematical physics. In
geometry it appears in the description of holomorphic curves in
$\mathbb{C} \P^n$, see \cite{bw}, \cite{cal}, \cite{cw},
\cite{jw}. In mathematical physics, it is a model for non-abelian
Chern-Simons vortices, which might have applications in
high-temperature superconductivity and which appear in a much
wider variety compared to the Yang-Mills framework, see e.g.
\cite{tar}, \cite{tar3} and \cite{yys} for further details and an
up-to-date set of references.

The existence of abelian Chern-Simons vortices has been quite deeply investigated in the literature,
see e.g. \cite{cay}, \cite{ci}, \cite{ntcv99}, \cite{sy}, \cite{tar96}. The study of the non-abelian
case is more recent, and we refer for example to \cite{dunne1995self}, \cite{kaolee}, \cite{lee},
\cite{noltar2000}.

We will be interested in the following problem on a compact
surface $\Sigma$. For the sake of simplicity,
we will assume that $Vol_g(\Sigma) = 1$.
\begin{equation}\label{eq:e-1}
  \left\{
      \begin{array}{ll}
        - \D u_1 = 2 \rho_1 \left( \frac{h_1 e^{u_1}}{\int_\Sg
      h_1 e^{u_1} dV_g} - 1 \right) - \rho_2 \left( \frac{h_2 e^{u_2}}{\int_\Sg
      h_2 e^{u_2} dV_g} - 1 \right) - 4 \pi \sum_{j=1}^{m} \a_{1,j} (\d_{p_j}-1), \\
       - \D u_2 = 2 \rho_2 \left( \frac{h_2 e^{u_2}}{\int_\Sg
      h_2 e^{u_2} dV_g} - 1 \right) - \rho_1 \left( \frac{h_1 e^{u_1}}{\int_\Sg
      h_1 e^{u_1} dV_g} - 1 \right) - 4 \pi \sum_{j=1}^{m} \a_{2,j} (\d_{p_j}-1). &
      \end{array}
  \right. \end{equation}
Here $h_1, h_2$ are smooth positive functions, and
$\alpha_{i,j}\geq 0$. The above system arises specifically from
gauged self-dual Schr\"odinger equations, see e.g. Chapter 6 in
\cite{yys}: the
Dirac deltas represent {\em vortices} of the wave function, namely
points where the latter vanishes.

To describe the history and the main features of the problem, we first desingularize the
equation using a simple change of variables. Consider indeed the fundamental solution $G_p(x)$
of the Laplace equation on $\Sigma$ with pole at $p$, i.e. the unique solution to
\begin{equation}\label{eq:Green}
   - \D G_p(x) = \d_p - \frac{1}{|\Sig|} \quad \hbox{ on }
   \Sig, \qquad \quad  \mbox{ with } \quad \int_{\Sigma} G_p(x) \, dV_g(x) =0.
\end{equation}
By the substitution
\begin{equation}\label{eq:change}
    u_i(x) \mapsto u_i(x) + 4\pi \sum_{j=1}^m \alpha_{i,j} G_{p_j}(x),\qquad h_i(x)\mapsto \wtilde{h}_i(x)= h_i(x) e^{-4\pi \sum_{j=1}^m \alpha_{i,j} G_{p_j}(x)}
\end{equation}
problem \eqref{eq:e-1} transforms into an equation of the type
\begin{equation} \label{eq:todaregul}
\left\{
    \begin{array}{ll}
      - \D u_1 = 2 \rho_1 \left( \frac{\wtilde{h}_1 e^{u_1}}{\int_\Sg
    \wtilde{h}_1 e^{u_1} dV_g} - 1 \right) - \rho_2 \left( \frac{\wtilde{h}_2 e^{u_2}}{\int_\Sg
    \wtilde{h}_2 e^{u_2} dV_g} - 1 \right), \\
     - \D u_2 = 2 \rho_2 \left( \frac{\wtilde{h}_2 e^{u_2}}{\int_\Sg
    \wtilde{h}_2 e^{u_2} dV_g} - 1 \right) - \rho_1 \left( \frac{\wtilde{h}_1 e^{u_1}}{\int_\Sg
    \wtilde{h}_1 e^{u_1} dV_g} - 1 \right), &
    \end{array}
  \right.
\end{equation}
where the functions $\wtilde{h}_j$ satisfy
\begin{equation} \label{eq:h}
\wtilde{h}_i > 0 \quad \hbox{ on } \Sigma \setminus \{p_1, \dots,
p_{m}\}; \qquad \quad
  \wtilde{h}_i(x) \simeq d(x,p_j)^{2 \a_{i,j}},\mbox{ near } p_j,\ i=1,\ 2.
\end{equation}

Problem \eqref{eq:todaregul} is variational, and solutions can be
found as critical points of the Euler-Lagrange functional $J_{\rho} : H^1(\Sg) \times
H^1(\Sg) \to \R$ ($\rho=(\rho_1,\rho_2)$) given by
\begin{equation} \label{funzionale}
J_{\rho}(u_1, u_2) = \int_\Sg Q(u_1,u_2)\, dV_g  + \sum_{i=1}^2
\rho_i \left ( \int_\Sg u_i dV_g - \log \int_\Sg \wtilde{h}_i e^{u_i} dV_g
\right ),
\end{equation}
 where $Q(u_1,u_2)$ is defined as:
\begin{equation}\label{eq:QQ}
Q(u_1,u_2) = \frac{1}{3} \left ( |\n u_1|^2 + |\n u_2|^2 + \n u_1 \cdot
\n u_2\right ).
\end{equation}

A basic tool for studying functionals like $J_{\rho}$ is the
Moser-Trudinger inequality, see \eqref{eq:mt}.  Its analogue for
the Toda system has been obtained in \cite{jw} and reads as
\begin{equation} \label{mtjw}  4\pi \sum_{i=1}^2 \left (\log \int_\Sg h_i e^{u_i} dV_g -
\int_\Sg u_i dV_g \right ) \leq \int_\Sg Q(u_1,u_2)\, dV_g +C \qquad \quad
\forall \, u_1, u_2 \in H^1(\Sigma),
\end{equation}
for some $C=C(\Sg)$. This inequality immediately allows to find a
global minimum of $J_{\rho}$ provided both $\rho_1$ and $\rho_2$
are less than $4 \pi$. For larger values of the parameters
$\rho_i$ $J_{\rho}$ is unbounded from below and the problem
becomes more challenging. In this paper we use min-max theory to
find a critical point of $J_\rho$ in a general non-coercive
regime. Our main result is the following:

\begin{thm} \label{t:ex} Let $\Lambda \subset \R^2$ be as in Definition \ref{d:glob}.
Let $\Sigma$ be a compact surface neither homeomorphic to $\S^2$ nor to $\R\P^2$, and assume that $(\rho_1, \rho_2) \not\in \Lambda$. Then
\eqref{eq:e-1} is solvable.
\end{thm}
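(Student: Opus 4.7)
The plan is to find a critical point of the functional $J_\rho$ in \eqref{funzionale} by a min-max scheme based on the topology of its low sublevels, use Struwe's monotonicity trick to bypass the lack of a Palais--Smale condition, and finally exploit compactness of the solution set for $\rho\notin\Lambda$ to pass from generic existence to existence at the given parameter.

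First I would carry out a concentration analysis: starting from \eqref{mtjw} and localizing it on balls and annuli, one shows that if $J_\rho(u_1,u_2)\le -L$ with $L$ large, then at least one of the normalized measures $\mu_i=\tilde h_i e^{u_i}/\int_\Sigma \tilde h_i e^{u_i}\,dV_g$ is weakly close to a convex combination of finitely many Diracs on $\Sigma$. The admissible cardinalities and mass distributions, together with the interplay between $\mu_1$ and $\mu_2$ forced by the Cartan structure of $Q$ in \eqref{eq:QQ}, are encoded by Definition \ref{d:glob}; the hypothesis $\rho\notin\Lambda$ is precisely what is needed so that the quantization results of Jost--Lin--Wang and Lin--Wei--Zhang (adapted to the singular weights $\tilde h_i$ of \eqref{eq:h}) rule out blow-up of solutions to \eqref{eq:todaregul}, making the solution set compact.

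Next I would package this as a topological retraction. Let $X$ be a CW model for the admissible concentration profiles, assembled from formal barycenter spaces $\Sigma_k=\bigl\{\sum_{\ell=1}^k t_\ell\delta_{x_\ell}:x_\ell\in\Sigma,\ t_\ell\ge 0,\ \sum t_\ell=1\bigr\}$ and joins thereof, with multiplicities dictated by how far $\rho_1,\rho_2$ lie beyond their quantization thresholds. The concentration analysis produces a continuous projection $\Psi:\{J_\rho\le-L\}\to X$. Dually, placing scaled standard bubbles at the support of each formal barycenter and performing a sharp expansion of $J_\rho$ along that family gives a test map $\Phi:X\to\{J_\rho\le-L'\}$ such that $\Psi\circ\Phi$ is homotopic to $\mathrm{id}_X$. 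By the topological computation in Kallel's appendix, $X$ is non-contractible whenever $\Sigma$ is not homeomorphic to $\S^2$ or $\R\P^2$, so $\Phi(X)$ represents a nontrivial cycle sitting at arbitrarily low energy.

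Using $X$ as the boundary of a cone, one then runs a standard min-max for contractions of $\Phi$ in $H^1(\Sigma)^2$; the non-contractibility of $X$ forces a min-max value $c_\rho>\sup_X J_\rho\circ\Phi$. Since Palais--Smale fails at this level, I would apply Struwe's monotonicity trick to the family $\{J_{t\rho}\}_{t\in[1-\varepsilon,1+\varepsilon]}$: the map $t\mapsto c_{t\rho}/t$ is monotone, hence a.e.\ differentiable, and at each differentiability point a bounded Palais--Smale sequence, hence a solution, exists. Choosing $t_n\to 1$ of such values yields solutions at $t_n\rho$ which, because $\rho\notin\Lambda$, admit a convergent subsequence by the compactness above; inverting \eqref{eq:change} gives a solution of \eqref{eq:e-1}. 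The hard part will be the design of the triple $(X,\Phi,\Psi)$ in the presence of the singular weights $\tilde h_i$ and the coupling between the two components: the critical concentration mass is $4\pi$ at a regular point of $\Sigma$ but $4\pi(1+\alpha_{i,j})$ at the singular point $p_j$, so $X$ must be suitably stratified and the bubble construction for $\Phi$ must distinguish these cases, with the energy expansion kept uniform across strata via carefully localized refinements of \eqref{mtjw}.
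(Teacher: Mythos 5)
Your overall skeleton --- improved Moser--Trudinger/concentration analysis on low sublevels, a barycenter-based topological model $X$ with maps $\Phi$ and $\Psi$ whose composition is homotopic to the identity, a min-max on the cone over $X$, Struwe's monotonicity trick, and compactness for $(\rho_1,\rho_2)\notin\Lambda$ via Theorem \ref{t:lwz} --- is the same as the paper's. But the heart of the argument, the design of $X$, has genuine gaps. The paper's model is not assembled from barycenters of $\Sigma$: it is the topological join $(\gamma_1)_k*(\gamma_2)_l$ of barycenters of two \emph{disjoint simple closed curves} $\gamma_1,\gamma_2\subset\Sigma$ which avoid the singular points and admit global retractions $\Pi_i:\Sigma\to\gamma_i$ (Lemma \ref{new}); this is the only place where the hypothesis that $\Sigma$ is not homeomorphic to $\S^2$ or $\R\P^2$ enters. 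Non-contractibility is then elementary, since $(\gamma_1)_k*(\gamma_2)_l\cong S^{2k+2l-1}$ (Remark \ref{r:sphere}). Your attribution of non-contractibility of $X$ to ``the topological computation in Kallel's appendix'' is incorrect: the appendix only shows that $\Sigma_k$ is a CW complex, hence an ENR, which is what makes the neighborhood retraction $\psi_k$ of Proposition \ref{p:projbar} possible; it says nothing about contractibility. If you insist on joins of the $\Sigma_k$'s themselves you would need a separate, nontrivial non-contractibility proof not contained in this paper, and the hypothesis on $\Sigma$ would then play no role at all --- a sign that your reconstruction of where that hypothesis is used cannot be right. The retractions $\Pi_i$ are also what allow one to push $\psi_k,\psi_l$ onto the curves, define $\Psi$ via the interpolation parameter $\wtilde r$, and prove $\Psi\circ\Phi_\lambda\simeq \mathrm{id}$.

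Second, the step you flag as ``the hard part'' --- stratifying $X$ according to the singular masses $4\pi(1+\alpha_{i,j})$ and building bubbles that distinguish singular from regular points --- is neither needed nor carried out, and as written it is left unresolved. Since all $\alpha_{i,j}\ge 0$, the weights $\wtilde h_i$ are bounded above, so the improved inequality with plain $4\pi$ thresholds (Lemma \ref{l:imprc}) applies verbatim and the sublevel alternative of Proposition \ref{p:altern} involves only $\Sigma_k$ and $\Sigma_l$, with $k,l$ determined by $\rho_1\in(4k\pi,4(k+1)\pi)$, $\rho_2\in(4l\pi,4(l+1)\pi)$; and because $\gamma_1,\gamma_2$ avoid the points $p_j$, the test functions see $\wtilde h_i$ bounded away from zero, so the energy expansion is uniform with no stratification. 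The masses $4\pi(1+\alpha_{i,j})$ enter only through Definition \ref{d:glob}, i.e.\ through the compactness set $\Lambda$ used in the very last step, not through the topology of low sublevels; relatedly, Definition \ref{d:glob} does not ``encode'' the admissible concentration profiles on sublevels as you claim. Your deformation $t\mapsto t\rho$ for the monotonicity trick is fine (the paper uses $(\rho_1+\mu,\rho_2+\mu)$; both keep $(k,l)$ fixed and remain in a compact subset of $\R^2\setminus\Lambda$ for small deformations), and your final compactness step agrees with the paper.
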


Let us point out that $\Lambda \subseteq \R^2$ is an explicit set
formed by an union of straight lines, see
Remark \ref{r:discr}. In particular it is a closed set with zero
Lebesgue measure.

\

Up to our knowledge, \emph{there is no previous existence result
in the literature} for the singular Toda system. Our result is
hence the first one in this direction, and is generic in the
choice of parameters $\rho_1$ and $\rho_2$. In the regular
case there are some previous existence results, see \cite{jlw, cheikh, mr2}, some of which have a counterpart in
\cite{djlw} and \cite{dja} for the scalar case \eqref{scalar} (see also \cite{dm}
for a higher order problem and \cite{bt1, bm, carma, malrui} for
the singular case). However, these require an upper bound either on one of the $\rho_i$'s
or both: hence our result covers most of the unknown cases also for the
regular problem.

The main difficulties in attacking \eqref{eq:todaregul} are mainly of
two kinds: compactness issues and the Morse-structure of the
functional, which we are going to describe below.

\

\noindent As many geometric problems, also  \eqref{eq:todaregul}
presents loss of compactness phenomena, as its solutions might
blow-up. To describe the general  phenomenon it is first
convenient to discuss the case of the scalar counterpart of
\eqref{eq:todaregul}, namely is a Liouville equation the form:
\begin{equation} \label{scalar}
 - \Delta u = 2 \rho \left( \frac{h \,  e^{u}}{\int_\Sigma h \,
e^{u} d V_g} - 1 \right),\end{equation} where $\rho \in \R$ and
where $h(x)$ behaves as in \eqref{eq:h} near the singularities.
Equation \eqref{scalar} rules the change of Gaussian curvature
under conformal deformation of the metric, and it also
describes the abelian counterpart of \eqref{eq:e-1} from the
physical point of view. This equation has been very much studied
in the literature; there are by now many results regarding
existence, compactness of solutions, bubbling behavior, etc. We
refer the interested reader to the reviews \cite{mreview, tar3}.

Concerning \eqref{scalar} it was proved in \cite{breme}, \cite{liyy} and \cite{ls} that for the regular case
a blow-up point $\ov{x}_R$ for a sequence $(u_n)_n$ of solutions satisfies the following quantization
property
\begin{equation} \label{eq:quantscal}
\lim_{r \to 0} \lim_{n \to + \infty} \rho \int_{B_r(\ov{x}_R)} h \, e^{u_n} dV_g = 4 \pi,
\end{equation}
and that the limit profile of solutions is that of a {\em bubble},
namely the logarithm of the conformal factor of the stereographic
projection from $S^2$ onto $\R^2$, composed with a dilation.

For the singular case instead, it was proven in \cite{bt1} and \cite{bmo} that if blow-up occurs at a
singular point $\ov{x}_S$ with weight $- 4 \pi \a$  then one has
\begin{equation} \label{eq:quantscalsing}
\lim_{r \to 0} \lim_{n \to + \infty} \rho \int_{B_r(\ov{x}_S)} h \, e^{u_n} dV_g = 4 \pi (1+\a),
\end{equation}
whereas \eqref{eq:quantscal} still holds true if blow-up occurs at
a regular point.

\

\noindent This  behaviour helps to explain the blow-up feature for
 system \eqref{eq:todaregul}, which inherits some character
from the scalar case. Consider first the regular case, that is,
\eqref{eq:e-1} with $\a_{i,j}=0$. Here a sequence of solutions can
blow-up in three different ways: one component blows-up and the
other does not; one component blows-up faster than the other; both
components blow-up at the same rate.

It was proved in \cite{jlw, jw2} that the quantization values for
the two components are respectively $(4\pi, 0)$ or $(0, 4 \pi)$ in
the first case, $(8 \pi, 4 \pi)$ or $(4 \pi, 8 \pi)$ in the second
case and $(8 \pi, 8 \pi)$ in the third one. Notice that, by the
results in \cite{dekomu}, \cite{esgrpi} and \cite{mupiwe}, all the
five alternatives may indeed happen.

\

When singular sources are present a similar phenomenon happens,
which has been investigated in the recent paper
\cite{linweizhang}. If blow-up occurs at a point $p$ with values
$\a_{1}, \a_2$ (we may allow them to vanish), the corresponding
blow-up values would be
$$
  (4\pi(1+\a_1, 0)); \qquad (0, 4\pi(1+\a_2)); \qquad (4 \pi (1+\a_1), 4 \pi(2 + \a_1 + \a_2));
$$
$$
  (4 \pi(2 + \a_1 + \a_2), 4 \pi (1 + \a_2)); \qquad (4\pi (2 + \a_1 + \a_2), 4\pi (2 + \a_1 + \a_2)).
$$
Other (finitely-many) blow-up values are indeed allowed, see
Theorem \ref{t:lwz} for details, as more involved situations are
not yet excluded (or known to exist). As a consequence, the set of
solutions to \eqref{eq:todaregul} is compact whenever
$(\rho_1,\rho_2) \notin \Lambda$: this is the main reason for our
assumption in  Theorem \ref{t:ex}.

\

Let us now show how we can study the sub-levels of the functional
and conclude existence of solutions via min-max methods.  The main tool in the
variational study of this kind of problems is the so-called
Chen-Li inequality, see \cite{cl}. In the scalar case, it implies
that  a suitable {\em spreading} of the term $e^u$ yields a better constant in the Moser-Trudinger inequality, which
in turn might imply a lower bound on the Euler functional $I_\rho$
of \eqref{scalar}
\begin{equation}\label{scalar2}
    I_\rho(u) =  \frac 1 2 \int_{\Sigma} |\n_g u|^2 dV_g + 2 \rho \left ( \int_\Sigma
    u \, dV_g -  \log \int_\Sigma h \, e^{u} dV_g \right),
    \qquad u \in H^{1}(\Sigma).
\end{equation}
The consequence of this fact is that if $\rho < 4 (k+1) \pi$, $k \in \N$, and if $I_\rho(u)$ is
large negative (i.e. when lower bounds fail) $e^u$ accumulates near at most $k$ points of $\Sigma$,
see e.g. \cite{dja}.
This suggests to introduce the family of unit measures $\Sigma_k$ which are supported in at
most $k$ points of $\Sigma$, known as {\em formal barycenters} of $\Sigma$
\begin{equation}
\label{selru}
\Sigma_{k}=\left\{\sum_{j = 1}^k t_{j}\delta_{x_{j}}:\  \sum_{j=1}^kt_{j}=1, \ x_{j}\in\Sigma \right\}.
\end{equation}
One can show that, for any integer $k$, $\Sigma_k$ is not
contractible and that its homology is mapped injectively into that
of the low sub-levels of $I_\rho$. This allows to prove existence
of solutions via suitable min-max schemes.

\

\noindent When both $\rho_1$ and $\rho_2$ are larger than $4 \pi$
the description of the sub-levels becomes more involved, since the
two components $u_1$ and $u_2$ interact in a non-trivial way. See
\cite{mr2} on this respect. In this paper we obtain a {\em partial} topological
characterization of the low energy levels of $J_\rho$, which is
however sufficient for our purposes. This strategy has been used
in \cite{bdm} and in \cite{bardem} for the singular scalar
equation and for a model in electroweak theory respectively, while
in this paper the general non-abelian case is treated for the
first time.

First, we construct two disjoint simple non-contractible
curves $\gamma_1, \gamma_2$ which do not intersect singular
points, and define global retractions $\Pi_1, \Pi_2$ of $\Sigma$
onto these two curves. Such curves do not exist for $\Sigma= \S^2$ or $\R\P^2$, and hence our arguments do not work in those cases.

Combining arguments from \cite{cl},
\cite{cheikh} and \cite{mr2} we prove that if $\rho_1 < 4 (k+1)
\pi$ and $\rho_2 < 4 (l+1) \pi$, $k, l \in \N$, then either
$\wtilde{h}_1 e^{u_1}$ is close to $\Sigma_k$ or $\wtilde{h}_2
e^{u_2}$ is close to $\Sigma_l$ in the distributional sense. Then
we can map continuously (and naturally) $\wtilde h_1e^{u_1}$ to
$\Sigma_k$ or $\wtilde h_2e^{u_2}$ to $\Sigma_l$; using then the
retractions $\Pi_i$ one can restrict himself to targets in
$(\gamma_1)_k$ or $(\gamma_2)_l$ only. This alternative can be
expressed naturally in terms of the {\em topological join}
$(\gamma_1)_k * (\gamma_2)_l$. Roughly speaking, given two
topological spaces $A$ and $B$, the join $A * B$ is the formal set
of segments joining elements of $A$ with elements of $B$, see
Section \ref{s:prel} for details. In this way, we are able to
define a global projection $\Psi$ from low sub-levels of $J_\rho$
onto $(\gamma_1)_k * (\gamma_2)_l$.

We can also construct a reverse map $\Phi_\lambda$ (where
$\lambda$ is a large parameter) from $(\gamma_1)_k * (\gamma_2)_l$
into arbitrarily low sub-levels of $J_\rho$ using suitable test
functions. Moreover, we show that the composition of both maps is
homotopic to the identity map. Finally, $(\gamma_1)_k
* (\gamma_2)_l$ is homeomorphic to a sphere of dimension $2k +2 l
- 1$ see Remark \ref{r:sphere}: in particular it is not
contractible, and this allows us to apply a min-max argument.

In this step a compactness property is needed, like the
Palais-Smale's. The latter is indeed not known for this problem,
but there is a way around it using a monotonicity method from
\cite{struwe88}. For that, compactness of solutions comes to rescue, and
here we use the results of \cite{jlw} and \cite{linweizhang}. This
is the reason why we assume $(\rho_1, \rho_2) \notin \Lambda$.

\

\noindent In this paper we also give a general result for a mean
field equation, Theorem \ref{t:ex2}, arising from models in fluid
dynamics and in the description of constant mean curvature
surfaces: to keep the introduction short we discuss its motivation
and how our result compares to the existing literature in Section
\ref{s:sh}.

\

\noindent The plan of the paper is the following: in Section
\ref{s:prel} we recall some preliminary results on Moser-Trudinger
inequalities, the notion of topological join and a compactness
theorem. In Section \ref{s:test} we construct a family of test
functions with low energy modelled on the topological join of
$(\gamma_1)_k$ and $(\gamma_2)_l$. In Section \ref{s:mtineq} we
derive suitable improved Moser-Trudinger inequalities to construct
projections from low sub-levels of $J_\rho$ into $(\gamma_1)_k *
(\gamma_2)_l$. In Section \ref{s:minmax} we prove our existence
theorem using the min-max argument and finally in Section
\ref{s:sh} we discuss the mean field equation.

\

\noindent {\bf Acknowledgment:} This paper includes an appendix of
Sadok Kallel which establishes that $\Sigma_k$ is a CW-complex.
This allows us to give a self-consistent and short proof of
Proposition \ref{p:projbar}. The authors are deeply grateful to
him for this contribution.

\

\noindent {\bf Added in Proof:} After this paper has been completed, further existence results have appeared in the literature. In \cite{shadow} the Leray-Schauder degree is computed for the singular Toda System if $\min \{\rho_1,\rho_2\} < 8 \pi$. As a consequence the authors obtain existence results. The computation of the degree is made via a careful description of the blow-up solutions for the Toda System exhibiting partial blow-up (this type of solutions has also been found in \cite{dpr1}). We point out that our existence result in this paper has no limitation on the sizes of $\rho_1$, $\rho_2$.

In \cite{jev-kal-mal} variational methods used to prove existence if $\min \{\rho_1,\rho_2\} < 8 \pi$ but without any restriction on the topology of $\Sigma$. The cases of $\alpha_j$ negative has been considered in \cite{batjmaa, bat-mal}, where existence and non-existence results are given.

\section{Notation and preliminaries} \label{s:prel}

\noindent In this section we collect some useful notation and preliminary material. The Appendix at the end of the paper use independent notation, which will be established there.

Given points $x, y \in \Sg$,  $d(x,y)$ will stand for the metric distance
between $x$ and $y$ on $\Sg$. Similarly, for any $p \in \Sg
$, $\Omega, \Omega' \subseteq \Sg$, we set:

$$ d(p, \Omega) = \inf \left\{ d(p,x) \; :  x \in \Omega
\right\}, \qquad
  d(\Omega,\Omega') = \inf \left\{ d(x,y) \; : \; x \in \Omega,\ y \in
  \Omega'
\right\}.
$$
The symbol $B_s(p)$ stands for the open metric ball of
radius $s$ and centre $p$, and the complement of a set $\Omega$ in
$\Sg$ will be denoted by $\Omega^c$.

Given a function $u \in L^1(\Sg)$ and $\Omega \subset \Sg$,  the average of $u$ on $\Omega$
is denoted by the symbol
$$ \fint_{\Omega} u \, dV_g = \frac{1}{|\Omega|} \int_{\Omega} u \,
dV_g.$$ We denote by $\ov{u}$ the average of $u$ in $\Sg$: since
we are assuming $|\Sg| = 1$, we have
$$
 \ov{u}= \int_\Sg u \, dV_g = \fint_\Sg u\, dV_g.
$$
The sub-levels of the functional $J_\rho$ will be indicated as
$$
J_\rho^{a}:=\left\{u=(u_1,u_2)\in H^1(\Sigma)\times H^1(\Sigma):\;J(u_1,u_2)\le a\right\}
$$
Throughout the paper the letter $C$ will stand for large constants which
are allowed to vary among different formulas or even within the same lines.
When we want to stress the dependence of the constants on some
parameter (or parameters), we add subscripts to $C$, as $C_\d$,
etc. We will write $o_{\alpha}(1)$ to denote
quantities that tend to $0$ as $\alpha \to 0$ or $\alpha \to
+\infty$; we will similarly use the symbol
$O_\a(1)$ for bounded quantities.

\

\noindent We recall next the classical Moser-Trudinger inequality, in its weak form
\begin{equation}\label{eq:mt}
 \log\int_{\Sigma}e^{u-\overline{u}}\,dV_{g}\leq\frac{1}{16 \pi}\int_{\Sigma}\left|\nabla_{g}u\right|^{2}\,dV_{g}+C;
 \qquad \quad u \in H^1(\Sigma),
\end{equation}
where $C$ is a constant depending only on $\Sigma$ and the metric $g$. For the Toda system, a similar
sharp inequality was derived in \cite{jw} concerning the regular case: indeed, since the weights $\a_{ij}$ are
positive, that inequality applies to the singular case as well, as the functions $\wtilde{h}_i$  are
uniformly bounded.


\begin{thm}\label{th:jw} (\cite{jw})
The functional $J_\rho$ is bounded from below if and only if $\rho_i \leq 4 \pi$, $i=1, 2$.
\end{thm}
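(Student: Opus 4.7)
The plan is to prove the two implications separately, in each case using the Jost--Wang inequality \eqref{mtjw}: directly for the easy direction, and to guide the construction of sharp test functions for the other.

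For sufficiency, assume $\rho_1,\rho_2\le 4\pi$ and write
$$ J_\rho(u_1,u_2)=\int_\Sigma Q(u_1,u_2)\,dV_g-\sum_{i=1}^2\rho_i\, a_i(u_i),\qquad a_i(u_i):=\log\int_\Sigma\wtilde h_i e^{u_i}\,dV_g-\overline{u_i}. $$
By Jensen's inequality applied to the density $e^{u_i-\overline{u_i}}$ together with the integrability of $\log\wtilde h_i$ across the singular points, each $a_i$ is bounded below by a universal constant. A case distinction on the signs of the $a_i$'s combined with $0\le\rho_i\le 4\pi$ gives $\sum_i\rho_i a_i\le 4\pi\sum_i a_i+C$, whence \eqref{mtjw} yields $J_\rho\ge -C'$.

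For necessity, by symmetry it is enough to treat $\rho_1>4\pi$. I would construct a one-parameter family of test functions concentrating at a regular point $x_0\in\Sigma\setminus\{p_1,\dots,p_m\}$: in normal coordinates centred at $x_0$ set
$$ \phi_\lambda(x)=\log\frac{\lambda^2}{(1+\lambda^2\,d(x,x_0)^2)^2}, $$
smoothly cut off far from $x_0$, and take $u_1^\lambda=\phi_\lambda$ and $u_2^\lambda=-\tfrac12\phi_\lambda$. The coupling $u_2=-u_1/2$ is precisely the one minimising $Q(u_1,\cdot)$ at fixed $u_1$, so that $Q(u_1^\lambda,u_2^\lambda)=\tfrac14|\nabla\phi_\lambda|^2$ pointwise. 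Standard asymptotics for Liouville bubbles then give
$$ \int_\Sigma Q(u_1^\lambda,u_2^\lambda)\,dV_g=8\pi\log\lambda+O(1),\qquad a_1(u_1^\lambda)=2\log\lambda+O(1),\qquad a_2(u_2^\lambda)=O(1), $$
so that
$$ J_\rho(u_1^\lambda,u_2^\lambda)=(8\pi-2\rho_1)\log\lambda+O(1)\longrightarrow-\infty\quad\text{as }\lambda\to\infty, $$
exactly when $\rho_1>4\pi$.

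The main technical point I expect is the $O(1)$ estimate for $a_2(u_2^\lambda)$: since $-\phi_\lambda/2$ is large and positive away from $x_0$ (where essentially all the volume of $\Sigma$ sits), both $\overline{u_2^\lambda}$ and $\log\int\wtilde h_2 e^{u_2^\lambda}\,dV_g$ individually grow like $\log\lambda$, and one has to track that their leading logarithmic contributions cancel. The singular weights $\wtilde h_i$ do not affect the leading constants, since $x_0$ can be chosen away from all the $p_j$'s and $\log\wtilde h_i$ is integrable on $\Sigma$.
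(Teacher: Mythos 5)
Your proposal is correct, and it is worth noting how it sits relative to the paper: the paper does not prove Theorem \ref{th:jw} at all, but quotes it from \cite{jw}, remarking only that the regular-case inequality \eqref{mtjw} transfers to the singular functional because the $\wtilde h_i$ are bounded above (here $\a_{i,j}\ge 0$ is used). Your sufficiency argument is exactly the deduction the paper alludes to: \eqref{mtjw} plus the uniform lower bound $a_i(u_i)\ge\int_\Sg\log\wtilde h_i\,dV_g$ from Jensen (valid since $\log\wtilde h_i$ has only logarithmic, hence integrable, singularities) gives $J_\rho\ge(4\pi-\rho_1)a_1+(4\pi-\rho_2)a_2-C\ge -C'$ when $\rho_i\le 4\pi$. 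Your necessity construction is, up to an additive constant in $\var_1$, precisely the paper's test function \eqref{e:pl} in the degenerate case $k=1$, $l=0$, $r=0$ (i.e.\ $u_2=-\tfrac12 u_1$, the pointwise minimiser of $Q(u_1,\cdot)$, giving $Q=\tfrac14|\n u_1|^2$), and your asymptotics $\int_\Sg Q=8\pi\log\l+O(1)$, $a_1=2\log\l+O(1)$, $a_2=O(1)$ match the estimates carried out in the proof of Proposition \ref{p:en}; choosing $x_0$ away from the $p_j$'s ensures $\wtilde h_1\ge c>0$ near the concentration point, so the singular weights indeed only affect the $O(1)$ terms. Two minor points: the cutoff of $\phi_\l$ is unnecessary (as in the paper, $d(\cdot,x_0)^2$ is Lipschitz on $\Sg$, so $\phi_\l\in H^1$), and when invoking \eqref{mtjw} for the singular weights you should say explicitly that you use only $\wtilde h_i\le C$, since the inequality as stated concerns smooth positive $h_i$. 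With these remarks your argument is a complete, self-contained proof of the statement, going beyond the paper, which relies on the citation.
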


\

\noindent As it is mentioned in the introduction, some useful
information arising from Moser-Trudinger type inequalities and
their improvements are the concentration of $e^{u_i}$ when
$u=(u_1,u_2)$ belongs to a low sub-level. To express this
rigorously, we denote $\M(\Sigma)$ the set of all Radon measures
on $\Sigma$, and introduce a norm by using duality versus Lipschitz functions, that is, we set:
\begin{equation}
\label{distsup} \| \mu  \|_{Lip'(\Sigma)} =
\sup_{\left\|f\right\|_{Lip\left(\Sigma\right)}\leq 1}
\left| \int_\Sigma f \, d\mu \right|; \qquad \quad \mu, \nu \in \M(\Sigma).
\end{equation}
We denote by $\dkr$ the corresponding distance, which receives the
name of Kantorovich-Rubinstein distance.

When a measure is close in the $Lip'$ sense to an element in
$\Sigma_k$, see \eqref{selru}, it is then possible to map it
continuously to a nearby element in this set. This
has been proved in \cite{dm}, but we give here a much shorter and
self-consistent proof.

\begin{pro}\label{p:projbar}
Given $k\in\N$, for $\e_0 $ sufficiently small there exists a
continuous retraction:
$$\psi_k: \{ \s \in \M(\Sigma),\ \dkr(\s , \Sigma_k) < \e_0 \} \to \Sigma_k.$$

Here continuity is referred to the distance $\dkr$. In particular,
if $\s_n \rightharpoonup \s $ in the sense of measures, with $\s
\in \Sigma_k$, then $\psi_k(\s_n) \to \s$.
\end{pro}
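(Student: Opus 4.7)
The plan is to exploit the fact, established in the appendix by Kallel, that $\Sigma_k$ is a CW-complex. First I would verify that $\Sigma_k$ is compact in $(\M(\Sigma),\dkr)$. This follows because the map $\Sigma^k\times\Delta^{k-1}\to\M(\Sigma)$ sending $((x_1,\dots,x_k),(t_1,\dots,t_k))$ to $\sum_{j=1}^k t_j\delta_{x_j}$ is continuous when $\M(\Sigma)$ is equipped with $\dkr$ (test against Lipschitz functions) and has image exactly $\Sigma_k$, while its domain is compact. Being therefore a \emph{finite} CW-complex, $\Sigma_k$ is a compact ANR (absolute neighborhood retract), a standard fact of algebraic topology.

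Next I would invoke the defining property of ANRs: whenever a metric space contains a compact ANR as a closed subset, the ANR is a neighborhood retract of that ambient space. Applied to $\Sigma_k\subset(\M(\Sigma),\dkr)$, which is closed by compactness, this produces an open neighborhood $U\supset\Sigma_k$ together with a continuous retraction $r:U\to\Sigma_k$. Using compactness once more, one can pick $\e_0>0$ small enough that $\{\sigma\in\M(\Sigma):\dkr(\sigma,\Sigma_k)<\e_0\}\subseteq U$, and then $\psi_k:=r|_{\{\dkr(\cdot,\Sigma_k)<\e_0\}}$ is the desired continuous retraction.

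For the second assertion, I would use that on the compact space $\Sigma$ the Kantorovich--Rubinstein distance metrizes weak-$*$ convergence of probability measures. Hence $\sigma_n\wk\sigma\in\Sigma_k$ forces $\dkr(\sigma_n,\sigma)\to 0$; for $n$ large $\sigma_n$ lies in the domain of $\psi_k$, and continuity yields $\psi_k(\sigma_n)\to\psi_k(\sigma)=\sigma$.

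The only genuinely non-trivial ingredient is the CW-complex structure on $\Sigma_k$, which is precisely what the appendix supplies; once that is available the argument reduces to two standard black boxes (the neighborhood-retraction property of ANRs, and the Kantorovich--Rubinstein duality on compact spaces), so I expect no serious obstacle beyond invoking them correctly.
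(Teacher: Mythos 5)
Your proposal is correct and follows essentially the same route as the paper: Kallel's appendix gives that $\Sigma_k$ is a CW-complex, hence a (compact) neighborhood retract (the paper phrases this as an ENR via Bredon, you as a compact ANR), which yields the retraction on a $\dkr$-neighborhood, and the final claim follows because weak-$*$ convergence coincides with $\dkr$-convergence here (the paper deduces this from compactness of the embedding $Lip(\Sigma)\subset C(\Sigma)$, you from the Kantorovich--Rubinstein duality for probability measures). No essential difference in substance.
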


\begin{pf} Observe that the inclusion $Lip(\Sigma) \subset C(\Sigma)$
is compact: therefore, $\M(\Sigma)=C(\Sigma)' \subset
Lip(\Sigma)'$ is also compact. Of course, the set $\Sigma_k
\subset \M(\Sigma)$, and then it is inside $Lip(\Sigma)'$. Since
$\Sigma_k$ is a Euclidean Neighbourhood Retract (ENR) (see
Appendix E of \cite{bredon}), there exists a neighbourhood $V
\supset \Sigma_k$ in the $Lip'$ topology, and a continuous
retraction $\psi_k: V \to \Sigma_k$.

Now, if $\s_n \rightharpoonup \s \in \Sigma_k$ in the sense of
measures, by compactness, $f_n \to \s$ in $Lip'$, and by
continuity, $\psi_k(f_n) \to \psi_k(\s)$. But, since $\psi_k$ is a
retraction, $\psi_k(\s)=\s$.
\end{pf}

\begin{remark} \label{kallel} In the Appendix to this paper Sadok Kallel proves
that $\Sigma_k$ is a CW-complex. As a consequence it is an
Euclidean Neighborhood Retract, see for instance Appendix E of
\cite{bredon}. And this is the key point of the proof of
Proposition \ref{p:projbar}.

\end{remark}

\noindent At some point of our proof we will be under the  assumptions of Proposition \ref{p:projbar}
for either $f = \wtilde{h}_1 e^{u_1}$ or for $f = \wtilde{h}_2 e^{u_2}$. To deal with this alternative it
will then be convenient to use the notion of {\em topological join}, which we recall here.
The topological join of two sets $A, B$ is defined as the family of elements of the form
$$
 \frac{ \left\{ (a,b,r): \; a \in A,\; b \in B,\; r \in [0,1]  \right\}}R,
$$
where $R$ is an equivalence relation such that
$$
(a_1, b,1) \stackrel{R}{\sim} (a_2,b, 1)  \quad \forall a_1, a_2
\in A, b \in B
  \qquad \quad \hbox{and} \qquad \quad
(a, b_1,0)  \stackrel{R}{\sim} (a, b_2,0) \quad \forall a \in A,
b_1, b_2 \in B.
$$

The elements of the join are usually written as formal sums
$(1-r)a+rb$.

\

\noindent The next tool we will need is a compactness result which follows from \cite{linweizhang,batman}: before stating it
it is convenient to introduce a finite set of couples of numbers, which represent possible
quantization values for the concentration of the exponential functions. Consider a  point $p$
at which \eqref{eq:e-1} has singular weights $\a_1 = \a_1(p), \a_2 = \a_2(p)$
in the first and the second component of the
equation. We give then the following two definitions.

\begin{df}\label{d:cvalues}
Given a couple of non-negative numbers $(\alpha_1, \alpha_2)$ we let
$\Gamma_{\a_1, \a_2}$ be the subset of an ellipse in $\R^2$ defined by the equation
$$
  \Gamma_{\a_1, \a_2} := \left\{ (\s_1, \s_2) \; : \; \s_1, \s_2 \geq 0,
  \s_1^2 - \s_1 \s_2 + \s_2^2 = 2 (1 + \a_1) \s_1 + 2 (1 + \a_2) \s_2    \right\}.
$$
We then let  $\Lambda_{\a_1, \a_2} \subseteq \Gamma_{\a_1, \a_2}$ be the set constructed via the
following rules:

\

\noindent {\bf 1.} the points $(0,0)$,
$(2(1+\a_1, 0)), (0, 2(1+\a_2)),  (2 (1+\a_1), 2 (2 + \a_1 + \a_2))$,
$(2 (2 + \a_1 + \a_2), 2 (1 + \a_2))$, $(2 (2 + \a_1 + \a_2), 2 (2 + \a_1 + \a_2))$ belong to
$\Lambda_{\a_1, \a_2}$;

\

\noindent {\bf 2.} if $(a, b) \in \Lambda_{\a_1, \a_2}$ then also any  $(c,d) \in \Gamma_{\a_1, \a_2}$
with $c = a + 2 m$,  $m  \in \N \cup \{ 0 \}$, $d \geq b$  belongs to $\Lambda_{\a_1, \a_2}$;

\

\noindent {\bf 3.} if $(a, b) \in \Lambda_{\a_1, \a_2}$ then also any  $(c,d) \in \Gamma_{\a_1, \a_2}$
with $d = b + 2 n$,  $n \in \N \cup \{ 0 \}$, $c \geq a$  belongs to $\Lambda_{\a_1, \a_2}$.
\end{df}

\

\begin{df} \label{d:glob} Given $\Lambda_{\a_1, \a_2}$ as in Definition  \ref{d:cvalues}, we set
$$
  \Lambda_i= 2 \pi \left\{ 2 n + \sum \limits_{j=1}^m a_j n_j,\;n\in\N\cup\{0\},\
  n_j\in\{0,1\} ,\;a_j\in\pi_i(\G_{\a_{1,j},\a_{2,j}})\right\}, \qquad i = 1, 2.
$$
where $\pi_i$ is the projection on the $i^{th}$ component.
We finally set
$$
   \Lambda = (\Lambda_1 \times \R) \cup (\R \times \Lambda_2) \subseteq \R^2.
$$
\end{df}

\

\noindent In \cite{linweizhang} it was proved that the local quantization values for blow-up at a point $p_j$ must belong to $\Lambda_{\a_{1,j},\a_{2,j}}$. Anyway, this does not suffice to get a global compactness result because, a priori, there could be some non-vanishing residual mass. In \cite{dpr1, dpr2} the authors construct sequences of blowing-up solutions for the Toda System where one component has some residual mass.\\
Such an issue was solved in \cite{batman}; given a blowing-up sequence of solutions of \eqref{eq:e-1}, at least one of the components must have zero residual mass. This easily implies that in case of blow-up one of the $\rho_i$ must be a finite combination of the local blow-up values, that is, it must belong to $\Lambda_i$.

\begin{thm}\label{t:lwz} (\cite{linweizhang,batman}) For $(\rho_1, \rho_2)$ in a fixed compact set
of $\R^2 \setminus \L$ the family of solutions to \eqref{eq:todaregul} is uniformly bounded in
$C^{2,\b}$ for some $\b > 0$.
\end{thm}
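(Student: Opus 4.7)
The plan is to argue by contradiction, combining the classical Brezis-Merle concentration-compactness strategy with the local blow-up classification of Lin-Wei-Zhang. Suppose the conclusion fails: there exist a compact set $K \subset \R^2 \setminus \Lambda$, parameters $(\rho_{1,n},\rho_{2,n}) \in K$ with $(\rho_{1,n},\rho_{2,n}) \to (\ov\rho_1,\ov\rho_2) \in K$, and solutions $(u_{1,n},u_{2,n})$ of \eqref{eq:todaregul} (normalized by $\ov{u}_{i,n}=0$ using the translation invariance of the equation) with $\max_i \|u_{i,n}\|_{L^\infty(\Sigma)} \to +\infty$. Setting $v_{i,n} := u_{i,n} - \log \int_\Sigma \wtilde h_i e^{u_{i,n}}\,dV_g$ and
\[
  S_i := \{p \in \Sigma : \exists\, x_n \to p \text{ with } v_{i,n}(x_n) \to +\infty\}, \qquad S := S_1 \cup S_2,
\]
the standard Brezis-Merle alternative (Section~1 of \cite{breme}), adapted to \eqref{eq:todaregul} with the H\"older weights $\wtilde h_i$, would show that $S$ is finite and that $v_{i,n}$ is locally uniformly bounded above in $C^{2,\beta}_{loc}(\Sigma \setminus S)$. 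Along a subsequence the probability densities $\mu_{i,n} := \wtilde h_i e^{v_{i,n}}$ then converge weakly as measures to $\mu_i = \sum_{p \in S_i} m_i(p)\,\delta_p + r_i(x)\,dV_g$ with $m_i(p) \ge 0$ and $r_i \in L^\infty_{loc}(\Sigma \setminus S)$.

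The core step is the classification of the local concentration pairs
\[
  (\sigma_1(p), \sigma_2(p)) := \tfrac{1}{2\pi}(\ov\rho_1\, m_1(p),\ \ov\rho_2\, m_2(p)), \qquad p \in S.
\]
I would invoke the Lin-Wei-Zhang local quantization theorem for the $SU(3)$ Toda system with conical singularities as a black box: it asserts that $(\sigma_1(p),\sigma_2(p)) \in \Lambda_{\alpha_1(p),\alpha_2(p)}$ (cf.\ Definition~\ref{d:cvalues}), where $\alpha_i(p) = \alpha_{i,j}$ if $p = p_j$ and $\alpha_i(p) = 0$ otherwise, and that when only one component blows up at $p$ one recovers the scalar quantization \eqref{eq:quantscal}-\eqref{eq:quantscalsing} for that component. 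This is the genuinely deep input, resting on delicate Pohozaev identities coupled with the classification of entire solutions to the singular Toda system on $\R^2$; it is also clearly the \emph{main obstacle} and I would not attempt to reprove it.

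Once the local quantization is in hand, assembling the global contradiction is mostly bookkeeping. I split into cases on the structure of $(S_1,S_2)$. If both sets are non-empty, a Pohozaev-Harnack argument for the full system rules out the regular residual ($r_i \equiv 0$), so each $\mu_i$ is purely atomic; summing gives $\ov\rho_i = 2\pi \sum_{p \in S} \sigma_i(p)$, and by Definition~\ref{d:glob} this places $(\ov\rho_1,\ov\rho_2) \in \Lambda_0$. If $S_2 = \emptyset$, then $v_{2,n}$ is globally bounded and $u_{1,n}$ satisfies, up to smooth lower-order terms, a scalar singular Liouville equation; \eqref{eq:quantscal}-\eqref{eq:quantscalsing} then force $\ov\rho_1 \in \Lambda_1$, so $(\ov\rho_1,\ov\rho_2) \in \Lambda_1 \times \R$. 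The symmetric case $S_1 = \emptyset$ gives $\R \times \Lambda_2$, and $S_1 = S_2 = \emptyset$ is excluded by the blow-up assumption. Hence $(\ov\rho_1,\ov\rho_2) \in \Lambda$ in every alternative, contradicting $K \cap \Lambda = \emptyset$.

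Finally, the uniform $L^\infty$ bound just obtained upgrades to a uniform $C^{2,\beta}$ bound by applying standard Schauder estimates directly to \eqref{eq:todaregul}: the right-hand sides become uniformly bounded, and the H\"older exponent $\beta > 0$ is controlled by the regularity of the weights $\wtilde h_i$ near the singular points $p_j$ (concretely, any $\beta \le \min_{i,j}(2\alpha_{i,j}) \wedge 1$ works).
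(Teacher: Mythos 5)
Your proposal is correct and follows essentially the same route as the paper, which does not reprove this statement but obtains it exactly as you do: the local quantization of \cite{linweizhang} taken as a black box, combined with the standard Brezis--Merle-type global blow-up analysis (Section 1 of \cite{breme}) and elliptic (Schauder) estimates, with the generous definition of $\Lambda$ absorbing all admissible local masses. The only inessential slip is your final H\"older exponent: when some $\alpha_{i,j}=0$ the corresponding weight is smooth and imposes no constraint, so $\beta$ should be limited only by the strictly positive exponents $2\alpha_{i,j}$ (and by $1$).
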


\

\begin{rem}\label{r:discr} Observe that $\L_{\a_1, \a_2}$ is finite,
and it coincides with the five elements $(4\pi, 0)$, $(0, 4 \pi)$,
$(8 \pi, 4 \pi)$, $(4 \pi, 8 \pi)$, $(8 \pi, 8 \pi)$ when both
$\a_1$ and $\a_2$ vanish. The quantization for the regular Toda system was proved in \cite{jlw}.
\end{rem}

\section{The topological set and test functions} \label{s:test}

We begin this section with an easy topological result, which will be essential in our analysis:

\begin{lem} \label{new} Let $\Sigma$ be a compact surface not homeomorphic to $\S^2$ nor $\R\P^2$. Then, there exist two
simple closed curves $\gamma_1, \gamma_2 \subseteq \Sigma$
satisfying (see Figure \ref{fig:torogammai})

\begin{enumerate}
\item $\gamma_1, \gamma_2$ do not intersect
each other nor any of the singular points $p_j$, $j=1 \dots m$;

\item there exist global retractions $\Pi_i: \Sigma \to \gamma_i$,
$i=1,2$.

\end{enumerate}

\end{lem}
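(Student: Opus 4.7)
The plan is to combine the classification of compact surfaces with basic obstruction theory. I will produce one 2-sided, non-separating simple closed curve $\gamma_0\subset\Sigma$ whose class in $H_1(\Sigma;\Z)$ is primitive, take two disjoint parallel copies of $\gamma_0$ inside a tubular neighbourhood as $\gamma_1,\gamma_2$, and then build each retraction $\Pi_i$ from a suitable map $\Sigma\to\S^1$.

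First, by the classification of compact surfaces and the hypothesis $\Sigma\not\cong\S^2,\R\P^2$, the homology group $H_1(\Sigma;\Z)$ has positive free rank: it is $\Z^{2g}$ with $g\geq 1$ in the orientable case, and $\Z^{k-1}\oplus\Z/2\Z$ with $k\geq 2$ in the non-orientable case. Standard surface topology -- e.g.\ direct inspection of the fundamental polygon -- then furnishes a simple closed curve $\gamma_0\subset\Sigma$ that is 2-sided, non-separating, and whose class generates a $\Z$-summand of $H_1(\Sigma;\Z)$. Being 2-sided, $\gamma_0$ has an open annular tubular neighbourhood $U\cong\gamma_0\times(-\varepsilon,\varepsilon)$; since the $p_j$ are finitely many, after shrinking $\varepsilon$ I may assume $\overline{U}\cap\{p_1,\dots,p_m\}=\emptyset$. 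Then $\gamma_1:=\gamma_0\times\{-\varepsilon/2\}$ and $\gamma_2:=\gamma_0\times\{\varepsilon/2\}$ are disjoint simple closed curves, isotopic to $\gamma_0$ and avoiding every singular point, which establishes property (1).

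For property (2) I use the Eilenberg--MacLane identification $[\Sigma,\S^1]=H^1(\Sigma;\Z)=\mathrm{Hom}(H_1(\Sigma;\Z),\Z)$ coming from $\S^1=K(\Z,1)$. Primitivity of $[\gamma_i]=[\gamma_0]$ provides a homomorphism $\phi_i:H_1(\Sigma;\Z)\to\Z$ with $\phi_i([\gamma_i])=1$, realised by a continuous map $F_i:\Sigma\to\S^1$. Fixing a homeomorphism $\gamma_i\cong\S^1$, the restriction $F_i\vert_{\gamma_i}:\gamma_i\to\gamma_i$ has degree $1$ and is therefore homotopic to the identity. Since $(\Sigma,\gamma_i)$ is a CW pair with $\gamma_i$ as a subcomplex, the homotopy extension property lifts this homotopy to one of $F_i$ on all of $\Sigma$, and its terminal map is the desired retraction $\Pi_i:\Sigma\to\gamma_i$.

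The step I expect to be the main obstacle is the production of the 2-sided, non-separating, primitive curve $\gamma_0$ in the non-orientable case, where one-sided curves carrying torsion homology must be avoided; this is precisely why $\R\P^2$ has to be excluded, since its only non-separating simple closed curve is the one-sided core of a Möbius band, representing the generator of $H_1(\R\P^2;\Z)=\Z/2\Z$, so no homomorphism into $\Z$ can hit it as a generator and no retraction onto it can exist. For non-orientable genus $k\geq 2$, however, $\gamma_0$ can be read off from the polygonal presentation (on the Klein bottle, for instance, the ``vertical'' generator in the word $abab^{-1}$ is 2-sided, non-separating and corresponds to the $\Z$ summand of $H_1$), and an analogous choice works for higher non-orientable genus. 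The orientable case poses no difficulty: every non-separating simple closed curve on an oriented surface is automatically 2-sided and extends to a symplectic basis of $H_1(\Sigma;\Z)$, hence is primitive.
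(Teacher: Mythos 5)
Your retraction step is sound (the identification $[\Sigma,\S^1]=H^1(\Sigma;\Z)=\mathrm{Hom}(H_1(\Sigma;\Z),\Z)$, degree~$\pm1$ restriction, homotopy extension), and for orientable $\Sigma$ of genus $\geq1$ and non-orientable $\Sigma$ of genus $\geq3$ your curve $\gamma_0$ exists, so there your argument gives a genuinely different (and cleaner, more conceptual) route than the paper, which instead treats $\T^2$ and the Klein bottle by hand and reduces everything else to $\Sigma=\T^2\# M$ via Dyck's theorem. But there is a genuine gap exactly at the Klein bottle $K$, which your lemma must cover: on $K$ there is \emph{no} simple closed curve that is simultaneously 2-sided and of infinite order in $H_1(K;\Z)\cong\Z\oplus\Z/2\Z$. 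Indeed, a 2-sided non-separating curve on $K$ cuts it into an annulus (the only connected surface with $\chi=0$ and two boundary circles), so it is isotopic to the fiber of the circle-bundle structure, whose class is the $\Z/2\Z$-torsion generator; and a 2-sided separating curve is null-homologous. Concretely, in the word $abab^{-1}$ one has $2a=0$ in $H_1$, and the curve carrying the free generator $b$ has a M\"obius-band neighbourhood (it is 1-sided), while the 2-sided curve is $a$, the torsion class. So your parenthetical claim about the ``vertical'' generator being both 2-sided and a $\Z$-summand generator is false, and no relabeling fixes it: the curve $\gamma_0$ you need simply does not exist on $K$.

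This breaks the construction at two points for $K$: you cannot take two disjoint parallel copies inside an annular neighbourhood (a 1-sided curve has a M\"obius neighbourhood, whose ``parallel'' boundary curve is a single circle doubly covering the core), and primitivity fails for the 2-sided candidates. The repair is what the paper does: on the fundamental square of $K$ take the two disjoint \emph{one-sided} vertical circles (the edge $B$ and the middle vertical segment), both of which have infinite-order, $\Z$-summand classes, and retract by freezing the horizontal coordinate. Your $K(\Z,1)$/HEP argument would in fact also produce retractions onto each of these two curves, since a homomorphism $H_1(K;\Z)\to\Z$ sends both classes to $1$; but you must obtain the two disjoint curves by this separate ad hoc choice rather than as parallel copies of a single 2-sided curve, and as written your proof does not do this.
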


\begin{proof}

The result is quite evident for the torus. For the Klein bottle, consider its fundamental square $ABAB^{-1}$. there is no previous existence result in the literature for the singular Toda
system.We can take $\gamma_1$ as the segment $B$, and $\gamma_2$ a segment parallel to $B$ and passing by the center of the square. The retractions are given by just freezing one cartesian component of the point in the square.

Observe that we can assume that $p_i$ do not intersect those curves.

For any other $\Sigma$ under the conditions of the lemma, Dyck's Theorem implies that it is the connected sum of a torus and another compact surface, $\Sigma= \T^2 \# M$. Then, one can modify the retractions of the torus so that they are constant on $M$.
\end{proof}

\begin{remark} Observe that each curve $\gamma_i$ generates a free subgroup in the first co-homology group of $\Sigma$. Then, Lemma \ref{new} cannot hold for $\S^2$ or $\R \P^2$.
\end{remark}

\begin{figure}[h]
\centering
\includegraphics[width=0.7\linewidth]{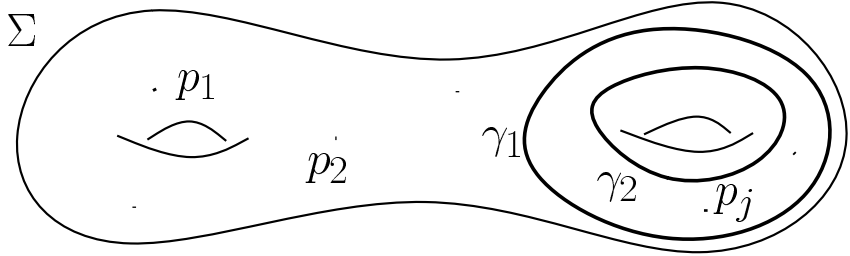}
\caption{The curves $\gamma_i$}
\label{fig:torogammai}
\end{figure}

\noindent For $\rho_1 \in (4 k \pi, 4 (k+1) \pi)$ and $\rho_2 \in
(4 l \pi, 4 (l+1) \pi)$ we would like to build a family of
test functions modelled on the topological join $ (\gamma_1)_k
* (\gamma_2)_l$, involving the formal barycenters of the curves
$\gamma_1, \gamma_2$, see \eqref{selru}.

\begin{rem} \label{r:sphere}
Since each $\gamma_i$ is homeomorphic to $\S^1$, it follows from
Proposition 3.2 in \cite{bm} that $(\gamma_1)_k$ is homeomorphic
to $\S^{2k-1}$ and $(\gamma_2)_l$ to $\S^{2l-1}$ (the homotopy
equivalence was found before in \cite{kk}). As it is well-known,
the join $S^m
* S^n$ is homeomorphic to $S^{m+n+1}$ (see for example
\cite{hat}), and therefore $ (\gamma_1)_k * (\gamma_2)_l$ is
homeomorphic to the sphere $\S^{2k+2l-1}$.
\end{rem}

\

\noindent Let $\zeta=(1-r)\s_2+r\s_1\in (\gamma_1)_k * (\gamma_2)_l$, where:
$$
  \s_1 := \sum_{i=1}^k t_i \d_{x_i} \in (\gamma_1)_k \qquad \quad
  \hbox{ and } \qquad \quad \s_2 :=  \sum_{j=1}^l s_j \d_{y_j} \in
  (\gamma_2)_l.
$$
Our goal is to define a test function modelled on any $\zeta
\in (\gamma_1)_k * (\gamma_2)_l$, depending on a positive parameter $\lambda$ and belonging to low sub-levels of $J$ for large $\lambda$, that
is a map
$$
\Phi_\lambda: (\gamma_1)_k * (\gamma_2)_l \to J_\rho^{-L}; \qquad \quad L \gg 0.
$$
For any $\l > 0$, we define the parameters
$$
  \l_{1,r} = (1-r) \l; \qquad \qquad \l_{2,r} = r \l.
$$
We introduce $\Phi_\lambda(\zeta)= \var_{\l , \zeta}$
whose components are defined by
\begin{equation}
\label{e:pl}
  \left( \begin{array}{c} \var_1(x)
   \\ \var_2(x)
  \end{array} \right)
   =  \left(  \begin{matrix}
    \log \, \sum_{i=1}^{k} t_i \left( \frac{1}{1 + \l_{1,r}^2 d(x,x_i)^2} \right)^2
    - \frac{1}{2} \log  \, \sum_{j=1}^{l} s_j \left( \frac{1}{1 + \l_{2,r}^2 d(x,y_j)^2} \right)^2
   \\ - \frac{1}{2} \log \, \sum_{i=1}^{k} t_i \left( \frac{1}{1 + \l_{1,r}^2 d(x,x_i)^2} \right)^2
      + \log \, \sum_{j=1}^{l} s_j \left( \frac{1}{1 + \l_{2,r}^2 d(x,y_j)^2} \right)^2
   \end{matrix}  \right).
\end{equation}
Notice that when $r = 0$ we have that $\l_{2,r} = 0$, and
therefore, as $\sum_{j=1}^l s_j = 1$, the second terms in both
rows are constant, independent
of $\s_2$; a similar consideration holds when $r = 1$.  These
arguments imply that the function $\Phi_\lambda$ is indeed well defined on
$(\gamma_1)_k * (\gamma_2)_l$.

We have then the following result.

\begin{pro}\label{p:en} Suppose $\rho_1 \in (4 k \pi, 4 (k+1) \pi)$ and $\rho_2 \in (4 l \pi, 4 (l+1) \pi)$.
Then one has
$$
  J_{\rho}( \var_{\l , \zeta}) \to - \infty \quad \hbox{ as  } \l \to + \infty
  \qquad \quad \hbox{ uniformly in }  \zeta \in (\gamma_1)_k * (\gamma_2)_l.
$$
\end{pro}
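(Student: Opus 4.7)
Set $U_1(x) = \log \sum_{i=1}^k t_i (1+\lambda_{1,r}^2 d(x,x_i)^2)^{-2}$ and $U_2(x) = \log \sum_{j=1}^l s_j (1+\lambda_{2,r}^2 d(x,y_j)^2)^{-2}$, so that the components in \eqref{e:pl} become $\varphi_1 = U_1 - \tfrac{1}{2} U_2$ and $\varphi_2 = -\tfrac{1}{2} U_1 + U_2$. A direct algebraic expansion of $Q$ from \eqref{eq:QQ} yields the clean identity
\[
Q(\varphi_1, \varphi_2) \;=\; \tfrac{1}{4}\bigl(|\nabla U_1|^2 + |\nabla U_2|^2 - \nabla U_1 \cdot \nabla U_2\bigr),
\]
which is the algebraic reason for adopting the weighted ansatz of \eqref{e:pl}.

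The plan is then to establish, uniformly in $\zeta \in (\gamma_1)_k * (\gamma_2)_l$, the four asymptotic bounds
\[
\int_\Sigma |\nabla U_i|^2 \, dV_g \;\le\; 32\pi k \log(1+\lambda_{i,r}) + C \quad (\text{with } l \text{ in place of } k \text{ for } i=2),
\]
\[
\Bigl|\int_\Sigma \nabla U_1 \cdot \nabla U_2 \, dV_g\Bigr| \;\le\; C, \qquad \int_\Sigma U_i \, dV_g \;=\; -4\log(1+\lambda_{i,r}) + O(1),
\]
\[
\log \int_\Sigma \wtilde h_i e^{\varphi_i} \, dV_g \;=\; -2\log(1+\lambda_{i,r}) + 2\log(1+\lambda_{3-i,r}) + O(1), \quad i=1,2.
\]
The first is the standard bubble-gradient computation, obtained by localising $U_i$ near each bubble centre where a single summand dominates. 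The third is the equally classical observation that $U_i \approx -4\log\lambda_{i,r}$ on the bulk of $\Sigma$. The second is the decisive point where Lemma \ref{new} enters: since $\mathrm{dist}(\gamma_1,\gamma_2)>0$, the concentration regions of $\nabla U_1$ and of $\nabla U_2$ lie a uniform positive distance apart, so on the support of the dominant mass of one gradient the other is of order one, and the cross integral remains bounded (for instance by Cauchy--Schwarz against $\int|\nabla U_i|\,dV_g = O(1)$). For the fourth, the upper bound follows by splitting the integral into a neighbourhood of $\{x_i\}$ (where $e^{U_1}$ is large while $e^{-U_2/2}$ is pointwise controlled by a multiple of $\lambda_{2,r}^2$ via $\mathrm{dist}(\gamma_1,\gamma_2)>0$) and its complement (where $e^{U_1}$ is negligible); the matching lower bound comes from restricting to a fixed ball around the $x_i$ of largest weight $t_i$, exploiting that $\wtilde h_1 \ge c_0 > 0$ near $\gamma_1$ since the $p_j$ lie off the curves by Lemma \ref{new}.

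Plugging the four estimates into the definition \eqref{funzionale} of $J_\rho$ and noting that the $\pm 2\log(1+\lambda_{3-i,r})$ contributions cancel exactly between $\rho_i \int \varphi_i \, dV_g$ and $-\rho_i \log \int \wtilde h_i e^{\varphi_i} \, dV_g$, one obtains
\[
J_\rho(\varphi_{\lambda,\zeta}) \;\le\; 2(4\pi k - \rho_1)\log(1+\lambda_{1,r}) + 2(4\pi l - \rho_2)\log(1+\lambda_{2,r}) + C,
\]
with $C$ independent of $\lambda$ and of $\zeta$. Under the hypothesis $\rho_1 > 4\pi k$, $\rho_2 > 4\pi l$, both coefficients are strictly negative. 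Since $\lambda_{1,r} + \lambda_{2,r} = \lambda$, at least one of the two scales exceeds $\lambda/2$, so the right-hand side tends to $-\infty$ as $\lambda \to +\infty$, uniformly in $\zeta$, which is the desired conclusion.

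The main technical obstacle lies in securing the estimates uniformly on the closed join, especially across the degenerate endpoints $r=0$ and $r=1$ where one of $\lambda_{1,r}, \lambda_{2,r}$ vanishes and the corresponding $U_i$ collapses to the constant $\log\sum_j s_j = 0$. The equivalence relations defining the topological join force the test function to be independent of the collapsing barycenter at those points, so the estimates must pass continuously to the limit; writing all error terms in terms of $\log(1+\lambda_{i,r})$ rather than $\log \lambda_{i,r}$ is exactly what makes this uniformity automatic. A further subtlety is the possible degeneration of the barycentric weights $t_i$ or $s_j$ to zero, which effectively reduces the number of bubbles in the corresponding barycenter; since the coefficients $4\pi k - \rho_1$ and $4\pi l - \rho_2$ remain negative under such a reduction, the gradient estimate $\int|\nabla U_1|^2 \le 32\pi k \log(1+\lambda_{1,r}) + C$ may be applied as a (non-sharp) uniform upper bound throughout the whole join.
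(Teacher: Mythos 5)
Your proposal is correct and follows essentially the same route as the paper's proof: the same algebraic identity reducing $Q(\varphi_1,\varphi_2)$ to $\tfrac14\left(|\nabla U_1|^2+|\nabla U_2|^2-\nabla U_1\cdot\nabla U_2\right)$, the boundedness of the cross term coming from the uniform separation of $\gamma_1$ and $\gamma_2$, the standard bubble estimates for the Dirichlet energies, averages and exponential integrals, and the identical cancellation yielding $J_\rho(\varphi_{\lambda,\zeta})\le 2(4\pi k-\rho_1)\log(1+\lambda_{1,r})+2(4\pi l-\rho_2)\log(1+\lambda_{2,r})+C$. Your uniform use of $\log(1+\lambda_{i,r})$ in place of the paper's separate treatment of $r=0,1$ with the correction terms $\delta_{i,r}$ is only a cosmetic streamlining, and the cross-term bound is really an $L^1$--$L^\infty$ pairing (bounded gradient of one function on the singular region of the other) rather than Cauchy--Schwarz, but the argument is sound.
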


\begin{pf}
We define $v_1,v_2: \Sg \rightarrow \mathbb{R}$ as follows;
\begin{equation*}
    v_1(x) = \log \, \sum_{i=1}^{k} t_i \left( \frac{1}{1 + \l_{1,r}^2 d(x,x_i)^2} \right)^2, \qquad
    v_2(x) = \log \, \sum_{j=1}^{l} s_j \left( \frac{1}{1 + \l_{2,r}^2 d(x,y_j)^2} \right)^2.
\end{equation*}
With this notation the components of $\var(x)$ are given by
$$
  \left( \begin{array}{c} \var_1(x)
   \\ \var_2(x)
  \end{array} \right)
   =  \left(  \begin{matrix}
    v_1(x) - \frac{1}{2} \,v_2(x)
   \\ - \frac{1}{2} \,v_1(x) + v_2(x)
   \end{matrix}  \right).
$$
We first prove two estimates on the gradients of $v_1$ and $v_2$.
\begin{equation} \label{gr1}
    |\n v_i(x)| \leq C \l_{i,r}, \qquad \mbox{for every $x\in\Sg$ and $r\in[0,1],$} \quad i=1,2,
\end{equation}
where $C$ is a constant independent of $\l$, $\zeta \in (\gamma_1)_k * (\gamma_2)_l$, and
\begin{equation} \label{gr2}
    |\n v_i(x)| \leq \frac{4}{d_{\,i,min}(x)}, \qquad \mbox{for every $x\in\Sg,$} \quad i=1,2,
\end{equation}
where $\dis{d_{1,min}(x) = \min_{i=1,\dots,k} d(x,x_i)}$ and $\dis{d_{2,min}(x) = \min_{j=1,\dots,l} d(x,y_j).}$

We show the inequalities just for $v_1$, as for $v_2$ the proof is similar. We have that
$$
    \n v_1(x) = - 2 \l_{1,r}^2 \frac{\sum_{i=1}^k t_i \bigr(1 + \l_{1,r}^2 d^2(x,x_i)\bigr)^{-3} \n \bigr(d^2(x,x_i)\bigr)}{\sum_{j=1}^k t_j \bigr(1 + \l_{1,r}^2 d^2(x,x_j)\bigr)^{-2}}.
$$
Using the estimate $\left|\n \bigr(d^2(x,x_i)\bigr)\right| \leq 2 d(x,x_i)$ and the following inequality
$$
    \frac{\l_{1,r}^2 d(x,x_i)}{1 + \l_{1,r}^2 d^2(x,x_i)} \leq C \l_{1,r}, \qquad i = 1, \dots, k,
$$
with $C$ a fixed constant, we obtain (\ref{gr1}). For proving (\ref{gr2}) we observe that if $\l_{1,r} = 0$ the inequality is trivially satisfied. If instead $\l_{1,r} > 0$ we have
\begin{eqnarray*}
  |\n v_1(x)| & \leq & 4 \l_{1,r}^2 \frac{\sum_{i=1}^k t_i \bigr(1 + \l_{1,r}^2 d^2(x,x_i)\bigr)^{-3} d(x,x_i)}{\sum_{j=1}^k t_j \bigr(1 + \l_{1,r}^2 d^2(x,x_j)\bigr)^{-2}} \leq 4 \l_{1,r}^2 \frac{\sum_{i=1}^k t_i \bigr(1 + \l_{1,r}^2 d^2(x,x_i)\bigr)^{-2} \frac{d(x,x_i)}{\l_{1,r}^2 d^2(x,x_i)}}{\sum_{j=1}^k t_j \bigr(1 + \l_{1,r}^2 d^2(x,x_y)\bigr)^{-2}} \\
   & \leq & 4 \frac{\sum_{i=1}^k t_i \bigr(1 + \l_{1,r}^2 d^2(x,x_i)\bigr)^{-2} \frac{1}{d_{\,1,min}(x)}}{\sum_{j=1}^k t_j \bigr(1 + \l_{1,r}^2 d^2(x,x_j)\bigr)^{-2}} = \frac{4}{d_{\,1,min}(x)},
\end{eqnarray*}
which proves (\ref{gr2}).

We consider now the Dirichlet part of the functional $J_\rho$. Taking into account the definition of $\var_1, \var_2$ we have
\begin{eqnarray*}
    \int_{\Sg} Q(\var_1, \var_2) \,dV_g & = & \frac{1}{3} \int_\Sg \bigr(|\n \var_1|^2 + |\n \var_2|^2 + \n \var_1 \cdot \n \var_2\bigr) \,dV_g \\
                                        & = & \frac{1}{3} \int_\Sg \! \left( \! |\n v_1|^2 + \frac{1}{4}|\n v_2|^2 - \n v_1 \cdot \n v_2 \! \right) \! dV_g + \frac{1}{3} \int_\Sg \! \left( \!|\n v_2|^2 + \frac{1}{4}|\n v_1|^2 - \n v_2 \cdot \n v_1 \! \right) \! dV_g + \\
                                        & + &  \frac{1}{3} \int_\Sg \left(-\frac{1}{2}|\n v_1|^2 - \frac{1}{2}|\n v_2|^2 + \frac54 (\n v_1 \cdot \n v_2)\right) \,dV_g \\
                                        & = & \frac{1}{4} \int_\Sg |\n v_1|^2 \,dV_g + \frac{1}{4} \int_\Sg |\n v_2|^2 \,dV_g - \frac{1}{4} \int_\Sg \n v_1 \cdot\n v_2 \,dV_g.
\end{eqnarray*}
We first observe that the part involving the mixed term $\n v_1 \cdot\n v_2$ is bounded by a constant depending only on $\Sg$. Indeed, we introduce the sets
\begin{equation}\label{sets}
    A_i = \left\{ x \in \Sg : d(x,x_i) = \min_{j=1}^k d(x,x_j) \right\}.
\end{equation}
Using then (\ref{gr2}) we have
\begin{eqnarray*}
    \int_\Sg \n v_1 \cdot\n v_2 \,dV_g & \leq & \int_\Sg |\n v_1| |\n v_2| \,dV_g \leq 16 \int_\Sg \frac{1}{d_{1,min}(x) \, d_{2,min}(x)} \,dV_g(x) \\
                                       & \leq & 16 \sum_{i=1}^k \int_{A_i} \frac{1}{d(x,x_i) \, d_{2,min}(x)} \,dV_g(x).
\end{eqnarray*}
We take now $\d > 0$ such that
$$
    \d = \frac 12 \min \left\{ \min_{i\in \{1,\dots k\}, j\in \{1,\dots l\}} d(x_i,
    y_j),\ \
    \min_{m,\ n\in \{1,\dots k\}, m \neq n} d(x_m,x_n) \right\}
$$
and we split each $A_i$ into $A_i = B_\d(x_i) \cup (A_i \setminus B_\d (x_i)), i = 1,\dots k$. By a change of variables and exploiting the fact that $d_{2,min}(x) \geq \frac 1C$ in $B_\d(x_i)$ we obtain
$$
    \sum_{i=1}^k \int_{B_\d(x_i)} \frac{1}{d(x,x_i) \, d_{2,min}(x)} \,dV_g(x) \leq C.
$$
Using the same argument for the part $A_i \setminus B_\d (x_i)$ with some modifications and exchanging the role of $d_{1,min}$ and $d_{2,min}$ we finally deduce that
\begin{equation} \label{eq:misto}
\int_\Sg \n v_1 \cdot\n v_2 \,dV_g \leq C.
\end{equation}

\noindent We want now to estimate the remaining part of the Dirichlet energy. For convenience we treat
the cases  $r=0$ and $r=1$ separately.  Consider first the case $r=0$: we then have  $\n v_2(x) = 0$ and we get
$$
    \int_{\Sg} Q(\var_1, \var_2) \,dV_g = \frac{1}{4} \int_{\Sg} |\n v_1(x)|^2 \,dV_g(x).
$$
We divide now the integral into two parts;
$$
    \frac{1}{4} \int_{\Sg} |\n v_1(x)|^2 \,dV_g(x) = \frac{1}{4} \int_{\bigcup_i B_{\frac{1}{\l}}(x_i)} |\n v_1(x)|^2 \,dV_g(x) + \frac{1}{4}\int_{\Sg \setminus \bigcup_i B_{\frac{1}{\l}}(x_i)} |\n v_1(x)|^2 \,dV_g(x).
$$
From (\ref{gr1}) we deduce that
$$
    \int_{\bigcup_i B_{\frac{1}{\l}}(x_i)} |\n v_1(x)|^2 \,dV_g(x) \leq C.
$$
Using then (\ref{gr2}) for the second part of the integral, recalling the definition (\ref{sets}) of the sets $A_i$, one finds that
\begin{eqnarray*}
    \frac 14 \int_{\Sg \setminus \bigcup_i B_{\frac{1}{\l}}(x_i)} |\n v_1(x)|^2(x) \,dV_g  & \leq & 4 \int_{\Sg \setminus \bigcup_i B_{\frac{1}{\l}}(x_i)} \frac{1}{d_{1,min}^2(x)} \,dV_g(x) + C \\
    & \leq & 4 \sum_{i=1}^k \int_{A_i \setminus B_{\frac{1}{\l}}(x_i)} \frac{1}{d_{1,min}^2(x)} \,dV_g(x) + C \\
                                         & \leq & 8 k \pi\bigr(1 + o_\l(1)\bigr) \log \l + C,
\end{eqnarray*}
where $o_\l(1) \to 0$ as  $\l \to + \infty$. Therefore we have
\begin{equation} \label{eq:1111111}
\int_{\Sg} Q(\var_1, \var_2) \,dV_g \leq 8 k \pi\bigr(1 + o_\l(1)\bigr) \log \l + C.
\end{equation}
Reasoning as in \cite{mreview}, Proposition 4.2 part $(ii)$, it is possible to show that
$$
  \int_\Sg v_1 \,dV_g = - 4 \bigr(1 + o_\l(1)\bigr) \log \l;  \qquad\qquad \log \int_\Sg e^{v_1} \,dV_g = - 2 \bigr(1 + o_\l(1)\bigr) \log \l$$
$$
  \log \int_\Sg e^{-\frac {1}{2} v_1} \,dV_g = 2 \bigr(1 + o_\l(1)\bigr) \log \l,
$$
and clearly
$$
\int_\Sg v_2 \,dV_g = O(1);  \qquad \log \int_\Sg e^{v_2} \,dV_g = O(1); \qquad \log \int_\Sg e^{-\frac {1}{2} v_2} \,dV_g = O(1).
$$
Therefore we get
$$
  \int_\Sg \var_1 \,dV_g = - 4 \bigr(1 + o_\l(1)\bigr) \log \l;  \qquad \log \int_\Sg e^{\var_1} \,dV_g = - 2 \bigr(1 + o_\l(1)\bigr) \log \l;
$$
$$
    \int_\Sg \var_2 \,dV_g = 2 (1 + o_\l(1)) \log \l;  \qquad \log \int_\Sg e^{\var_2} \,dV_g = - 2 (1 + o_\l(1)) \log \l.
$$
Inserting the latter equalities in the expression of the functional $J_\rho$ and using the fact that $\widetilde{h}_i \geq \frac{1}{C}, i=1,2$ outside a small neighbourhood of the singular points (which are avoided by the curves $\gamma_1, \gamma_2$), we obtain
$$
  J_\rho(\var_1, \var_2) \leq \bigr( 8 k \pi - 2 \rho_1 + o_\l(1) \bigr)\log \l + C,
$$
where $C$ is independent of $\l$ and $\s_1 ,\s_2$.

For the case $r=1$, by the same argument we have that
$$
  J_\rho(\var_1, \var_2) \leq \bigr( 8 l \pi - 2 \rho_2 + o_\l(1) \bigr)\log \l + C.
$$

\noindent We consider now the case $r\in (0,1)$. By \eqref{eq:misto} the Dirichlet part can be estimated by
$$
    \int_{\Sg} Q(\var_1, \var_2) \,dV_g \leq \frac{1}{4} \int_{\Sg} |\n v_1(x)|^2 \,dV_g(x) + \frac{1}{4} \int_{\Sg} |\n v_2(x)|^2 \,dV_g(x) + C.
$$
For general $r$ one can just substitute $\l$ with $\l_{1,r}$ in \eqref{eq:1111111} (and similarly for the $v_2$),
to get the following estimate
\begin{equation} \label{gr}
    \int_{\Sg} Q(\var_1, \var_2) \,dV_g \leq 8 k \pi\bigr(1 + o_\l(1)\bigr) \log \bigr(\l_{1,r} + \d_{1,r}\bigr) + 8 l \pi\bigr(1 + o_\l(1)\bigr) \log \bigr(\l_{2,r} + \d_{2,r}\bigr) + C,
\end{equation}
where $\d_{1,r} > \d > 0$ as $r \to 1$ and $\d_{2,r} > \d > 0$ as $r \to 0$, for
some fixed $\d$. The same argument as for $r=0,1$ leads to
$$
  \int_\Sg v_1 \,dV_g = - 4 \bigr(1 + o_\l(1)\bigr) \log \bigr(\l_{1,r} + \d_{1,r}\bigr) + O(1); \qquad \int_\Sg v_2 \,dV_g = - 4 \bigr(1 + o_\l(1)\bigr) \log \bigr(\l_{2,r} + \d_{2,r}\bigr) + O(1),
$$
therefore we obtain
\begin{eqnarray}
    \int_\Sg \var_1 \,dV_g = - 4 \bigr(1 + o_\l(1)\bigr) \log \bigr(\l_{1,r} + \d_{1,r}\bigr) + 2 \bigr(1 + o_\l(1)\bigr) \log \bigr(\l_{2,r} + \d_{2,r}\bigr) + O(1), \label{av1} \\
    \int_\Sg \var_2 \,dV_g = 2 \bigr(1 + o_\l(1)\bigr) \log \bigr(\l_{1,r} + \d_{1,r}\bigr) - 4 \bigr(1 + o_\l(1)\bigr) \log \bigr(\l_{2,r} + \d_{2,r}\bigr) + O(1).   \label{av2}
\end{eqnarray}
We consider now the exponential term. We have
$$
    \int_\Sg e^{\var_1} \,dV_g = \sum_{i=1}^k t_i \int_\Sg \frac{1}{\bigr( 1 + \l_{1,r}^2 d(x,x_i)^2 \bigr)^2} \left( \sum_{j=1}^l s_j \frac{1}{\bigr( 1 + \l_{2,r}^2 d(x,y_j)^2 \bigr)^2} \right)^{-\frac 12}\,dV_g(x).
$$

Clearly it is enough to estimate the term
$$
\int_\Sg \frac{1}{\bigr( 1 + \l_{1,r}^2 d(x,\ov{x})^2 \bigr)^2} \left( \sum_{j=1}^l s_j \frac{1}{\bigr( 1 + \l_{2,r}^2 d(x,y_j)^2 \bigr)^2} \right)^{-\frac 12}\,dV_g(x)
$$
with $\ov{x}\in \{ x_1,\dots x_k \}$. Letting $\dis{\d = \frac{\min_j \{ d(\ov{x},y_j) \}}2}$ we  divide the domain into two regions as follows: $\Sg = B_\d(\ov{x}) \cup (\Sg \setminus B_\d(\ov{x}))$. When we integrate in $B_\d(\ov{x})$ we perform a change of variables for the part involving $\l_{1,r}$ and observing that $\frac 1C \leq d(x, y_j) \leq C, j=1,\dots,l$, for every $x\in B_\d(\ov{x})$, we deduce
$$
\int_{B_\d(\ov{x})} \frac{1}{\bigr( 1 + \l_{1,r}^2 d(x,\ov{x})^2 \bigr)^2} \left( \sum_{j=1}^l s_j \frac{1}{\bigr( 1 + \l_{2,r}^2 d(x,y_j)^2 \bigr)^2} \right)^{-\frac 12}\,dV_g(x) = \frac{\bigr(\l_{2,r} + \d_{2,r}\bigr)^2}{\bigr(\l_{1,r} + \d_{1,r}\bigr)^2}\bigr(1 + O(1)\bigr).
$$
On the other hand for the integral over $\Sg \setminus
B_\d(\ov{x})$ we use that $\frac 1C \leq d(x, \ov{x}) \leq C$ to
get that this part is a higher-order term and can be absorbed by
the latter estimate. Recall now that $\wtilde{h}_1$ stays bounded
away from zero in a neighbourhood of the curve $\gamma_1$ (see
the beginning of the section). Therefore, since the contribution
of the integral outside a neighbourhood of $\gamma_1$ is
negligible, we can conclude that
\begin{equation} \label{exp1}
\log \int_\Sg  \wtilde{h}_1 e^{\var_1} \,dV_g = 2 \log \bigr(\l_{2,r} + \d_{2,r}\bigr) - 2 \log \bigr(\l_{1,r} + \d_{1,r}\bigr) + O(1).
\end{equation}
Similarly we have that
\begin{equation} \label{exp2}
\log \int_\Sg \wtilde{h}_2 e^{\var_2} \,dV_g = 2\log \bigr(\l_{1,r} + \d_{1,r}\bigr) - 2\log \bigr(\l_{2,r} + \d_{2,r}\bigr) + O(1).
\end{equation}

\noindent Using the estimates (\ref{gr}), (\ref{av1}), (\ref{av2}), (\ref{exp1}) and (\ref{exp2}) we finally obtain
$$
  J_\rho(\var_1, \var_2) \leq \bigr( 8 k \pi - 2 \rho_1 + o_\l(1) \bigr)\log \bigr(\l_{1,r} + \d_{1,r}\bigr) + \bigr( 8 l \pi - 2 \rho_2 + o_\l(1) \bigr)\log \bigr(\l_{2,r} + \d_{2,r}\bigr) + O(1).
$$
Recalling that $\rho_1 > 4k\pi, \rho_2 > 4l\pi$ and observing that $\dis{\max_{r\in[0,1]}\{ \l_{1,r}, \l_{2,r} \}} \to +\infty$ as $\l \to \infty$, we conclude the proof.
\end{pf}

\section{Moser-Trudinger inequalities and topological join} \label{s:mtineq}

\noindent In this section we are going to give an improved version of the Moser-Trudinger inequality $\dis{\eqref{mtjw}}$, where the constant $\dis{4\pi}$ can be replaced by an integer multiple under the assumption that the integral of $\dis{\wtilde h_ie^{u_i}}$ is distributed on different sets with positive mutual distance. The improved inequality implies that if $\dis{J_\rh(u_1,u_2)}$ attains very low values, then $\dis{\wtilde h_ie^{u_i}}$ has to concentrate near a given number (depending on $\rho_i$) of points
for some $\dis{i\in\{1,2\}}$. As anticipated in the introduction, we will see that this induces
a natural map  from low sub-levels of $J_\rho$ to the topological join of some sets of barycenters. This   extends some analysis  from \cite{jlw} and \cite{cheikh}, where the authors considered the case $\dis{\rh_2<4\pi}$, and from \cite{mr2}, where both parameters belong to the range $\dis{(4\pi,8\pi)}$. We start with a covering lemma:

\begin{lem}\label{l:step1}
Let $\dis{\d>0}$, $\dis{\th>0}$, $\dis{k,l\in\N}$ with $\dis{k\ge l}$, $\dis{f_i\in L^1(\Sg)}$ be non-negative functions with $\dis{\|f_i\|_{L^1(\Sg)}=1}$ for $\dis{i=1,2}$ and $\dis{\{\O_{1,i},\O_{2,j}\}_{i\in\{0,\dots,k\},j\in\{0,\dots,l\}}\subset\Sigma}$ such that
$$d(\O_{1,i},\O_{1,i'})\ge\d\qquad\qquad\forall\;i, \ i'\in\{0,\dots,k\}\hbox{ with }i\ne i';$$
$$d(\O_{2,j},\O_{2,j'})\ge\d\qquad\qquad\forall\;j,\ j' \in\{0,\dots,l\}\hbox{ with }j\ne j',$$
and
$$\int_{\O_{1,i}}f_1dV_g\ge\th\qquad\qquad\forall\;i\in\{0,\dots,k\};$$
$$\int_{\O_{2,j}}f_2dV_g\ge\th\qquad\qquad\forall\;j\in\{0,\dots,l\}.$$
Then, there exist $\dis{\ov\d>0,\;\ov\th>0}$, independent of
$\dis{f_i}$, and $\dis{\{\O_n\}_{n=0}^k\subset\Sigma}$ such that
$$d(\O_n,\O_n')\ge\ov\d\qquad\qquad\forall\;n,\ n'\in\{0,\dots,k\}\hbox{ with }n\ne n'$$
and
$$ |\O_n| \geq \ov\th \qquad\qquad\forall\;n\in\{0,\dots,k\};$$
$$\int_{\O_n}f_1dV_g\ge\ov\th\qquad\qquad\forall\;n\in\{0,\dots,k\};$$
$$\int_{\O_n}f_2dV_g\ge\ov\th\qquad\qquad\forall\;n\in\{0,\dots,l\}.$$
\end{lem}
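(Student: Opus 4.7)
My plan is to reduce to a finite combinatorial problem by replacing each $\Omega_{1,i}$ and $\Omega_{2,j}$ with a representative small ball carrying a definite fraction of their mass, and then constructing an injective assignment between the two families which, thanks to the $\delta$-separation hypothesis, is automatic once chosen carefully.

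For the reduction, fix $r=\delta/10$ and cover $\Sigma$ by $N=N(r,\Sigma)$ geodesic balls of radius $r$. By pigeonhole, for each $i\in\{0,\dots,k\}$ there is a cover ball $\beta_i$ meeting $\Omega_{1,i}$ with $\int_{\beta_i}f_1\,dV_g\ge\theta/N$, and the map $i\mapsto\beta_i$ is injective: if two distinct $\Omega_{1,i},\Omega_{1,i'}$ both met the same cover ball, then $d(\Omega_{1,i},\Omega_{1,i'})\le 2r<\delta$, contradicting the hypothesis. Analogously construct balls $\beta'_j$ for $f_2$. The within-family separations become $d(\beta_i,\beta_{i'})\ge\delta-4r=3\delta/5$ and $d(\beta'_j,\beta'_{j'})\ge 3\delta/5$, and each ball has volume $\ge c(r,\Sigma)>0$.

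The key geometric observation is a uniqueness of ``close partners.'' Say $j$ is \emph{close} to $i$ if $d(\beta_i,\beta'_j)<\delta/10$. If $j$ were close to two distinct $i,i'$, then
$$d(\beta_i,\beta_{i'})\le\delta/10+2r+\delta/10=2\delta/5,$$
contradicting the bound $3\delta/5$; similarly no $i$ is close to two distinct $j$'s. Hence ``close'' defines a partial injection $i^{\ast}$ from a subset $J_{II}\subseteq\{0,\dots,l\}$ into $\{0,\dots,k\}$. I then define $\tau:\{0,\dots,l\}\hookrightarrow\{0,\dots,k\}$ by setting $\tau(j):=i^{\ast}(j)$ for $j\in J_{II}$ and, for the remaining $j\in J_I:=\{0,\dots,l\}\setminus J_{II}$, by picking distinct unused values in $\{0,\dots,k\}\setminus i^{\ast}(J_{II})$; this is possible because $|J_I|+|J_{II}|=l+1\le k+1$.

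Enumerating $\{0,\dots,k\}\setminus\mathrm{Im}\,\tau=\{i_{l+1},\dots,i_k\}$, I set $\Omega_n:=\beta_{\tau(n)}\cup\beta'_n$ for $0\le n\le l$ and $\Omega_n:=\beta_{i_n}$ for $l+1\le n\le k$. The volume, $f_1$-mass, and (for $n\le l$) $f_2$-mass lower bounds are immediate with $\bar\theta:=\min\{\theta/N,\,c(r,\Sigma)\}$. The only nontrivial verification is the cross-family separation $d(\beta_{\tau(n)},\beta'_{n'})$ for $n\neq n'$: if $\tau(n)\in i^{\ast}(J_{II})$, the unique $j$ close to $\beta_{\tau(n)}$ is $n$ itself by injectivity of $i^{\ast}$, so $\beta_{\tau(n)}$ is not close to $\beta'_{n'}$; if $\tau(n)\notin i^{\ast}(J_{II})$, then by construction $\beta_{\tau(n)}$ is close to no $\beta'_j$ at all. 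In either case $d(\beta_{\tau(n)},\beta'_{n'})\ge\delta/10$, yielding $d(\Omega_n,\Omega_{n'})\ge\bar\delta:=\delta/10$. The main obstacle I foresee is precisely this close-partner uniqueness: without the quantitative use of the $\delta$-separation, unmatched balls could be dragged arbitrarily close to matched ones, destroying the required mutual distance between the $\Omega_n$'s.
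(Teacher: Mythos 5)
Your proof is correct and follows essentially the same route as the paper's: cover $\Sigma$ by finitely many balls of radius comparable to $\delta$, select for each $\O_{1,i}$ and $\O_{2,j}$ a ball carrying mass at least $\th/N$, use the $\d$-separation to show each selected ball has at most one \emph{close} partner in the other family, and then form the $\O_n$ as matched unions plus arbitrary unions of the leftovers. Your write-up is in fact slightly more explicit than the paper's (which leaves the final separation check as ``easy to verify''), but the construction and constants are the same in spirit, so nothing further is needed.
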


\begin{pf}
We set $\dis{\ov\d=\frac{\d}8}$ and consider the open cover
$\dis{\left\{B_{\ov\d}(x)\right\}_{x\in\Sg}}$ of $\dis{\Sg}$; by compactness,
$\dis{\Sg\subset\bigcup_{h=1}^{H}B_{\ov\d}(x_h)}$ for some $\dis{\{x_h\}_{h=1}^{H}\subset\Sg}$,
$H=H\left(\ov\d,\Sg\right)$.\\
We choose
$\dis{\{y_{1,i},y_{2,j}\}_{i\in\{0,\dots,k\},j\in\{0,\dots,l\}}\subset\{x_h\}_{h=1}^{H}}$
such that
$$\int_{B_{\ov\d}(y_{1,i})}f_1dV_g=\max\left\{\int_{B_{\ov\d}(x_h)}f_1dV_g:\;B_{\ov\d}(x_h)\cap\O_{1,i}\ne\emptyset\right\};$$
$$\int_{B_{\ov\d}(y_{2,j})}f_2dV_g=\max\left\{\int_{B_{\ov\d}(x_h)}f_2dV_g:\;B_{\ov\d}(x_h)\cap\O_{2,j}\ne\emptyset\right\}$$
Since $d(y_{1,i}, \Omega_{1,i})< \ov\d$,  we have that
$\dis{d(y_{1,i},y_{1,i'})\ge6\ov\d}$ for $i\neq i'$. Analogously,
$\dis{d(y_{2,j},y_{2,j'})\ge6\ov\d}$ if $j \neq j'$.\\
In particular, this implies that for any $\dis{i\in\{0,\dots,k\}}$
there exists at most one $\dis{j(i)}$ such that
$\dis{d(y_{2,j(i)},y_{1,i})<3\ov\d}$. We relabel the index $i$ so
that for $i=1,\  \dots l$ such $j(i)$ exists, and we relabel the
index $j$ so that $j(i)=i$. We now define:
$$\O_n:=\left\{\begin{array}{ll}B_{\ov\d}(y_{1,n})\cup B_{\ov\d}(y_{2,n})&\hbox{if }n\in\{0,\dots,l\}\\
B_{\ov\d}(y_{1,n})&\hbox{if
}n\in\{l+1,\dots,k\}.\end{array}\right.
$$ In other words, we make unions of balls $B_{\ov\d}(y_{1,n})\cup
B_{\ov\d}(y_{2,n})$ if they are close to each other: for separate
balls, we make arbitrary unions. If $k>l$, the remaining balls are
considered alone.

It is easy to check that those sets satisfy the theses of Lemma
\ref{l:step1}.

\end{pf}

\

\noindent To show the improved Moser-Trudinger inequality, we will need a {\em localized}
version of the inequality $\dis{\eqref{mtjw}}$, which was proved in \cite{mr2}.

\begin{lem}\label{l:mrtoda} (\cite{mr2})
Let $\dis{\d>0}$ and $\dis{\O\Subset\wtilde\O\subset\Sg}$ be such that $\dis{d\left(\O, \partial \wtilde\O\right)\ge\d}$.\\
Then, for any $\dis{\e>0}$ there exists $\dis{C=C(\e,\d)}$ such that for any $\dis{u=(u_1,u_2)\in H^1(\Sg)\times H^1(\Sg)}$
$$\log\int_\O e^{u_1-\fint_{\wtilde\O}u_1dV_g}dV_g+\log\int_\O e^{u_2-\fint_{\wtilde\O}u_2dV_g}dV_g\le
\frac{1}{4\pi}\int_{\wtilde\O}Q(u_1,u_2)dV_g+
\e\int_{\Sigma}Q(u_1,u_2)dV_g+  C.$$
\end{lem}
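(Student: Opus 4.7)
The plan is to reduce the claim to the global Jost--Wang inequality \eqref{mtjw} applied to a suitable truncation of $(u_1,u_2)$ with support inside $\tilde\O$. Pick a smooth cut-off $\chi\in C_c^\infty(\tilde\O)$ with $0\le\chi\le 1$, $\chi\equiv 1$ on a slightly smaller neighbourhood $\O'$ with $\O\Subset\O'\Subset\tilde\O$ (chosen so that $d(\O,\pa\O')\ge\d/2$ and $d(\O',\pa\tilde\O)\ge\d/2$), and $\|\n\chi\|_\infty\le C/\d$. Writing $c_i:=\fint_{\tilde\O}u_i\,dV_g$, set $\tilde u_i:=\chi(u_i-c_i)\in H^1(\Sg)$; then $\tilde u_i\equiv u_i-c_i$ on $\O$ and $\mathrm{supp}\,\tilde u_i\subset\tilde\O$. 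Applying \eqref{mtjw} to $(\tilde u_1,\tilde u_2)$ with $h_i\equiv 1$ yields
\begin{equation*}
4\pi\sum_{i=1}^2\left(\log\int_\Sg e^{\tilde u_i}\,dV_g-\int_\Sg\tilde u_i\,dV_g\right)\le\int_\Sg Q(\tilde u_1,\tilde u_2)\,dV_g+C.
\end{equation*}

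I would then estimate the three contributions separately. Since $\chi\equiv 1$ on $\O$, $\int_\Sg e^{\tilde u_i}\,dV_g\ge\int_\O e^{u_i-c_i}\,dV_g$, producing the left-hand side of the target inequality. For the mean-value terms, Cauchy--Schwarz and the Poincar\'e inequality on $\tilde\O$ give $\left|\int_\Sg\tilde u_i\,dV_g\right|\le C_\d\|\n u_i\|_{L^2(\tilde\O)}$; combined with Young's inequality and the equivalence between $Q$ and $|\n u_1|^2+|\n u_2|^2$ (its $2\times 2$ matrix is positive definite) this absorbs into $\e\int_\Sg Q\,dV_g+C_{\e,\d}$ for any prescribed $\e>0$. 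For the Dirichlet term I would expand $\n\tilde u_i=\chi\n u_i+(u_i-c_i)\n\chi$ and, via Cauchy--Schwarz on the cross term, aim at a decomposition of the form
\begin{equation*}
\int_\Sg Q(\tilde u_1,\tilde u_2)\,dV_g\le\int_{\tilde\O}Q(u_1,u_2)\,dV_g+\e\int_\Sg Q(u_1,u_2)\,dV_g+C_{\e,\d}.
\end{equation*}

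The main obstacle is this last step: the sharp coefficient $\tfrac{1}{4\pi}$ must be preserved in front of $\int_{\tilde\O}Q$, which forces the cut-off error to be a genuine $\e$--fraction of $\int_\Sg Q$ rather than a fixed $C_\d$ multiple of it. Following the strategy of \cite{mr2}, I would exploit the fact that the boundary-layer contributions $(u_i-c_i)\n\chi$ are supported only in the thin shell $\tilde\O\setminus\O'$, and apply a localized Poincar\'e inequality on that shell after subtracting the local mean of $u_i$ there; Young's inequality then lets us balance the $C/\d$ factor coming from $\|\n\chi\|_\infty$ against the small local Poincar\'e constant, so that the difference $c_i-\fint_{\tilde\O\setminus\O'}u_i$ is reabsorbed into the $\e\int_\Sg Q$ term through step two above. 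Inserting the three estimates into the Jost--Wang inequality and dividing by $4\pi$ then produces the claim with $C=C(\e,\d)$.
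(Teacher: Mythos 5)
Note first that the paper does not prove this lemma at all: it is stated with the citation \cite{mr2} and used as a black box, so your attempt is a from-scratch proof rather than a variant of an argument in the paper, and it has to be judged on its own.

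The reduction you set up (cut-off, apply \eqref{mtjw} to $\wtilde u_i=\chi(u_i-c_i)$, absorb the mean-value terms by Poincar\'e plus Young since they are \emph{linear} in $\|\n u_i\|_{L^2}$) is the natural first attempt, and you correctly isolate the decisive step, namely $\int_\Sg Q(\wtilde u_1,\wtilde u_2)\le \int_{\wtilde\O}Q(u_1,u_2)+\e\int_\Sg Q(u_1,u_2)+C_{\e,\d}$. But the mechanism you propose to close that step does not work. The Poincar\'e constant of the shell $\wtilde\O\setminus\O'$ is not small: the shell is thin only in the transversal direction, while it wraps around $\O$, so its Poincar\'e--Wirtinger constant is of order one (take a function equal to $\pm1$ on two angular halves of the shell). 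Even granting the most optimistic scaling, in which that constant were of order $\d$ and exactly balanced $\|\n\chi\|_\infty\le C/\d$, the boundary-layer terms $\int_{\wtilde\O\setminus\O'}(u_i-m_i)^2|\n\chi|^2$ come out as a \emph{fixed} constant (depending on $\e,\d$, after Young on the cross term) times $\int_{\wtilde\O\setminus\O'}|\n u_i|^2\le\int_{\wtilde\O}|\n u_i|^2$, i.e. a full multiple of the local Dirichlet energy rather than an $\e$-fraction of the global one. This is fatal for the statement: the coefficient in front of $\int_{\wtilde\O}Q$ becomes $\frac{1+C}{4\pi}$ with $C$ not small, and with a non-sharp local constant the summation over the $k+1$ disjoint regions in Lemma \ref{l:imprc} no longer produces the factor $(k+1)$ needed against $\rho_1<4(k+1)\pi$ in Lemma \ref{l:II<-M}. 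In addition, the discrepancy $c_i-\fint_{\wtilde\O\setminus\O'}u_i$ enters your error through $|\n\chi|^2$ \emph{quadratically}, so it cannot be ``reabsorbed through step two,'' which only handles quantities linear in $\|\n u_i\|_{L^2}$ (subtracting the shell mean inside the exponential and paying the constant difference multiplicatively would fix this particular point, but not the main one above). Nor does a pigeonhole choice of a thinner sub-shell help: the gain of one factor from the pigeonhole is beaten by the quadratic growth of $\|\n\chi\|_\infty^2$ in the number of shells, while the shell Poincar\'e constant does not shrink with its width. Some genuinely different ingredient is required to make the cut-off error $\e$-small with respect to the \emph{global} energy --- for instance splitting $u_i-c_i$ into a low-frequency part, bounded in $L^\infty$ by $C_\Lambda\|\n u_i\|_{L^2(\Sg)}$ and hence absorbable by Young, and a high-frequency remainder whose $L^2$ norm is at most a prescribed small fraction of $\|\n u_i\|_{L^2(\Sg)}$, to which alone the cut-off is applied --- which is the type of device used for such localized inequalities and in \cite{mr2}. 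As written, the proposal has a genuine gap at its key step.
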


\noindent Here comes the improved inequality: basically, if the {\em mass} of both $\dis{\wtilde h_1e^{u_1}}$ and $\dis{\wtilde h_2e^{u_2}}$ is spread respectively on at least $\dis{k+1}$ and $\dis{l+1}$ different sets, then the logarithms in $\dis{\eqref{mtjw}}$ can be multiplied by $\dis{k+1}$ and $\dis{l+1}$ respectively.\\
Notice that this result was given in \cite{cheikh} in the case $\dis{l=0}$ and in \cite{mr2} in the case $\dis{k=l=1}$.

\begin{lem}\label{l:imprc}
Let $\dis{\d>0}$, $\dis{\th>0}$, $\dis{k,l\in\N}$ and
$\dis{\{\O_{1,i},\O
_{2,j}\}_{i\in\{0,\dots,k\},j\in\{0,\dots,l\}}\subset\Sigma}$ be
such that
$$d(\O_{1,i},\O_{1,i'})\ge\d\qquad\qquad\forall\;i,\ i'\in\{0,\dots,k\}\hbox{ with }i\ne i';$$
$$d(\O_{2,j},\O_{2,j'})\ge\d\qquad\qquad\forall\;j,\ j'\in\{0,\dots,l\}\hbox{ with }j\ne j'.$$
Then, for any $\dis{\e>0}$ there exists $\dis{C=C\left(\e,\d,\th,k,l,\Sg\right)}$ such that any $\dis{u=(u_1,u_2)\in H^1(\Sg)\times H^1(\Sg)}$ satisfying
$$\int_{\O_{1,i}}\wtilde h_1e^{u_1}dV_g\ge\th\int_\Sg\wtilde h_1e^{u_1}dV_g\qquad\qquad\forall\;i\in\{0,\dots,k\};$$
$$\int_{\O_{2,j}}\wtilde h_2e^{u_2}dV_g\ge\th\int_\Sg\wtilde h_2e^{u_2}dV_g\qquad\qquad\forall\;j\in\{0,\dots,l\}$$
verifies
$$(k+1)\log\int_\Sg\wtilde h_1e^{u_1-\ov{u_1}}dV_g+(l+1)\log\int_\Sg\wtilde h_2e^{u_2-\ov{u_2}}dV_g\le \frac{1+\e}{4\pi}\int_\Sg Q(u_1,u_2)dV_g+C.$$
\end{lem}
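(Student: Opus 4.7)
The strategy is to localize the Toda Moser--Trudinger inequality of Lemma \ref{l:mrtoda} on suitably chosen disjoint neighborhoods and then exploit the mass assumptions to upgrade it to the claimed improved form. This extends the scheme of \cite{cheikh} (case $l=0$) and \cite{mr2} (case $k=l=1$) to the general situation $k\ge l\ge 0$.

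First I would apply Lemma \ref{l:step1} to the normalized densities $f_i=\wtilde h_ie^{u_i}/\int_\Sg\wtilde h_ie^{u_i}dV_g$ to obtain $\ov\d,\ov\th>0$ and consolidated sets $\{\O_n\}_{n=0}^k$, pairwise at distance at least $\ov\d$, with $|\O_n|\ge\ov\th$ and carrying at least a fraction $\ov\th$ of the $\wtilde h_1e^{u_1}$-mass for every $n\in\{0,\dots,k\}$ and of the $\wtilde h_2e^{u_2}$-mass for $n\in\{0,\dots,l\}$. I would then enlarge each $\O_n$ to an open set $\wtilde\O_n$, still pairwise disjoint and with $d(\O_n,\pa\wtilde\O_n)\ge \ov\d/4$, and apply Lemma \ref{l:mrtoda} on each pair $(\O_n,\wtilde\O_n)$ with a small parameter $\eta$:
\[
\log\int_{\O_n}e^{u_1-\fint_{\wtilde\O_n}u_1}dV_g+\log\int_{\O_n}e^{u_2-\fint_{\wtilde\O_n}u_2}dV_g\le \frac{1}{4\pi}\int_{\wtilde\O_n}Q\,dV_g+\eta\int_\Sg Q\,dV_g+C.
\]

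For the $k-l$ indices $n\in\{l+1,\dots,k\}$ the mass hypothesis on $u_2$ is not available, so I would bound the second log from below by a Jensen/Poincar\'e argument: since $|\O_n|\ge\ov\th$, one has $\fint_{\O_n}u_2-\fint_{\wtilde\O_n}u_2\ge -C\|\n u_2\|_{L^2(\Sg)}$ by Poincar\'e on $\wtilde\O_n$, hence
\[
\log\int_{\O_n}e^{u_2-\fint_{\wtilde\O_n}u_2}dV_g\ge \log|\O_n|+\fint_{\O_n}u_2-\fint_{\wtilde\O_n}u_2\ge -\eta\int_\Sg Q\,dV_g-C_\eta,
\]
where the last step uses $\|\n u_i\|_{L^2(\Sg)}^2\le 6\int_\Sg Q\,dV_g$ (a consequence of the positive-definiteness of $Q$) and Young's inequality. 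Summing all $k+1$ inequalities, exploiting the disjointness of the $\wtilde\O_n$ to control $\sum_n\int_{\wtilde\O_n}Q\le\int_\Sg Q$, and using the mass assumptions together with $\wtilde h_i\le\|\wtilde h_i\|_\infty$ to pass from each $\log\int_{\O_n}e^{u_i-\fint_{\wtilde\O_n}u_i}$ to $\log\int_\Sg\wtilde h_ie^{u_i-\fint_{\wtilde\O_n}u_i}$ up to an additive constant depending on $\ov\th$ and $\|\wtilde h_i\|_\infty$, I would arrive at
\[
\sum_{n=0}^k\log\int_\Sg\wtilde h_1e^{u_1-\fint_{\wtilde\O_n}u_1}dV_g+\sum_{n=0}^l\log\int_\Sg\wtilde h_2e^{u_2-\fint_{\wtilde\O_n}u_2}dV_g\le \frac{1+\eta'}{4\pi}\int_\Sg Q\,dV_g+C.
\]
Finally I would replace the local averages $\fint_{\wtilde\O_n}u_i$ by the global ones $\ov{u_i}$: since $|\wtilde\O_n|\ge\ov\th$, Poincar\'e and Young give $|\fint_{\wtilde\O_n}u_i-\ov{u_i}|\le C\|\n u_i\|_{L^2(\Sg)}\le\eta''\int_\Sg Q\,dV_g+C_{\eta''}$, and the asserted inequality then follows upon choosing $\eta,\eta',\eta''$ sufficiently small in terms of the prescribed $\e$ (the finitely many small errors pick up a constant factor at most $k+1$, which is harmless).

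The main subtlety I foresee is the treatment of the extra indices $n\in\{l+1,\dots,k\}$ where only the $u_1$ mass condition is available: here one cannot use the two-sided strength of Lemma \ref{l:mrtoda}, and the Jensen/Poincar\'e lower bound on the otherwise unused $u_2$ log is what allows one to extract just the $u_1$ contribution at the cost of a small perturbation of $\int_\Sg Q$. The remaining work is a careful bookkeeping of these $\eta$-errors to ensure that they can be absorbed without destroying the sharp coefficient $\frac{1+\e}{4\pi}$ on the right-hand side.
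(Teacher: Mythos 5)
Your proposal is correct and follows essentially the same route as the paper: consolidate the sets via Lemma \ref{l:step1}, apply the localized inequality of Lemma \ref{l:mrtoda} on enlarged pairwise-disjoint neighbourhoods, handle the surplus indices $n\in\{l+1,\dots,k\}$ by a Jensen-type lower bound on the unused $u_2$-term, and use Poincar\'e (with $\|\n u_i\|_{L^2}^2\le C\int_\Sg Q\,dV_g$) plus the mass hypotheses to replace local by global averages and local by global integrals, absorbing all small errors into the factor $\frac{1+\e}{4\pi}$. Your explicit Poincar\'e control of $\fint_{\O_n}u_2-\fint_{\wtilde\O_n}u_2$ in the Jensen step is in fact a slightly more careful rendering of the corresponding estimate in the paper.
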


\begin{pf} In the proof we assume that $\ov{u_1}=\ov{u_2}=0$.
After relabelling the indexes, we can suppose $\dis{k\ge l}$ and apply Lemma $\dis{\ref{l:step1}}$ with $\dis{f_i=\frac{\wtilde h_ie^{u_i}}{\int_\Sg\wtilde h_ie^{u_i}dV_g}}$ to get $\dis{\{\O_j\}_{j=0}^k\subset\Sigma}$ with
$$d(\O_i,\O_j)\ge\ov\d\qquad\qquad\forall\;i,j\in\{0,\dots,k\}\hbox{ with }i\ne j$$
and
$$\int_{\O_i}\wtilde h_1e^{u_1}dV_g\ge\ov\th\int_\Sg\wtilde h_1e^{u_1}dV_g\qquad\qquad\forall\;i\in\{0,\dots,k\};$$
$$\int_{\O_j}\wtilde h_2e^{u_2}dV_g\ge\ov\th\int_\Sg\wtilde h_2e^{u_2}dV_g\qquad\qquad\forall\;j\in\{0,\dots,l\}.$$

Notice that:
$$\log\int_\Sg\wtilde h_ie^{u_i}dV_g=\fint_{\wtilde\O_j} u_i dV_g+\log\int_\Sg\wtilde h_1e^{u_i-\fint_{\wtilde\O_j}u_idV_g}dV_g, \ i=1,2.$$

The average on $\wtilde\O_j$ can be estimated by Poincar\'{e} inequality:

\begin{eqnarray}
&&\fint_{\wtilde\O_j} u_i dV_g \le \frac{1}{\left|\wtilde\O_j\right|} \int_{\Sg} |u_i| dV_g \le C \left ( \int_{\Sg} |\nabla u_i|^2 dV_g  \right )^{1/2} \le C+ \e  \int_{\Sg} |\nabla u_i|^2 dV_g,\ i=1,\ 2.
\end{eqnarray}

We now apply, for any $\dis{j\in\{0,\dots,k\}}$ Lemma $\dis{\ref{l:mrtoda}}$ with , $\dis{\O=\O_j}$ and $\dis{\wtilde\O=\wtilde\O_j:=\left\{x \in \Sigma: d(x,\O_j)<\frac{\ov \d}{2}\right\}}
$: for $\dis{j\in\{0,\dots,l\}}$ we get

\begin{eqnarray} \label{caso1}
&&\log\int_\Sg\wtilde
h_1e^{u_1-\fint_{\wtilde\O_j}u_1dV_g}dV_g+\log\int_\Sg\wtilde
h_2e^{u_2-\fint_{\wtilde\O_j}u_2dV_g}dV_g\\\nonumber &\le&
2\log\frac{1}{\ov\th}+\log\int_{\O_j}\wtilde
h_1e^{u_1-\fint_{\wtilde\O_j}u_1dV_g}dV_g+\log\int_{\O_j}\wtilde
h_2e^{u_2-\fint_{\wtilde\O_j}u_2dV_g}dV_g\\\nonumber &\le& C+
\log\int_{\O_j}e^{u_1-\fint_{\wtilde\O_j}u_1dV_g}dV_g+\log\int_{\O_j}e^{u_2-\fint_{\wtilde\O_j}u_2dV_g}dV_g\\\nonumber
\label{eq:l0} &\le&
C+\frac{1}{4\pi}\int_{\wtilde\O_j}Q(u_1,u_2)dV_g + \e
\int_{\Sigma}Q(u_1,u_2)dV_g, \ j=1, \dots l.
\end{eqnarray}
For $\dis{j\in\{l+1,\dots,k\}}$ we have
\begin{eqnarray}
&& \log\int_\Sg\wtilde h_1e^{u_1-\fint_{\wtilde\O_j}
u_1dV_g}dV_g\le\log\frac{1}{\ov\th}+\left\|\wtilde
h_1\right\|_{L^\infty(\Sg)}+\log\int_{\O_j}e^{u_1-\fint_{\wtilde\O_j}u_1dV_g}dV_g\\\nonumber
\label{eq:l1} &\le&
C-\log\int_{\O_j}e^{u_2-\fint_{\wtilde\O_j}u_2dV_g}dV_g+\frac{1}{4\pi}\int_{\wtilde\O_j}Q(u_1,u_2)dV_g
+ \e \int_{\Sigma} Q(u_1,u_2)dV_g.
\end{eqnarray}

The exponential term on the second component can be estimated  by
using Jensen's inequality:
\begin{eqnarray}
&&\log\int_{\O_j}e^{u_2-\fint_{\wtilde\O_j}u_2dV_g}dV_g=\log|\O_j|+\log\fint_{\O_j}e^{u_2-\fint_{\wtilde\O_j}u_2dV_g}dV_g \\\nonumber
\label{eq:l3}
&\ge&\log|\O_j|\ge -C.
\end{eqnarray}

Putting together \eqref{eq:l1} and \eqref{eq:l3}, we have:
\begin{equation} \label{caso2} \log\int_\Sg\wtilde h_1e^{u_1-\fint_{\wtilde\O_j}u_1dV_g}dV_g\le
\frac{1}{4\pi}\int_{\wtilde\O_j}Q(u_1,u_2)dV_g + \e \int_{\Sigma}
Q(u_1,u_2)dV_g + C, \ j= l+1 \dots k. \end{equation} Summing over
all $\dis{j\in\{0,\dots,k\}}$ and taking into account
\eqref{caso1}, \eqref{caso2}, we obtain the result, renaming $\e$
appropriately. \end{pf}

\noindent We will now use a technical result that gives sufficient conditions to apply Lemma $\dis{\ref{l:imprc}}$. Its proof can be found for instance in \cite{dja,cheikh}.

\begin{lem}\label{l:er} (\cite{cheikh,mr2})
Let $\dis{f\in L^1(\Sg)}$ be a non-negative function with $\dis{\|f\|_{L^1(\Sg)}=1}$ and let $\dis{m\in\N}$ be such that there exist $\dis{\e>0,\;s>0}$ with
$$\int_{\bigcup_{j=0}^mB_s(x_j)}fdV_g<1-\e\qquad\qquad\forall\;\{x_j\}_{j=0}^m\subset\Sg.$$
Then there exist $\dis{\ov\e>0,\;\ov s>0}$, not depending on $\dis{f}$, and $\dis{\left\{\ov x_j\right\}_{j=1}^m\subset\Sg}$ satisfying
$$\int_{B_{\ov s}(\ov x_j)}fdV_g>\ov{\e}\qquad\qquad\forall\;j\in\{1,\dots,m\},$$
$$B_{2\ov s}\left(\ov x_i\right)\cap B_{2\ov s}\left(\ov x_j\right)=\emptyset\qquad\qquad\forall\;i,j\in\{1,\dots,m\},i\ne j.$$
\end{lem}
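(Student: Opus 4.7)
The plan is to construct the points $\ov x_1, \dots, \ov x_m$ by a greedy procedure: at each stage one selects a point where a $\ov s$-ball carries the largest possible mass, subject to being at distance at least $4\ov s$ from the previously chosen points. The lemma reduces to showing that, for a suitable choice of $\ov s$ and $\ov\e$ depending only on $s,\e,m$ and $\Sg$, this greedy procedure cannot stall before the $m$-th step, the obstruction being ruled out by the hypothesis.

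Concretely, I would fix $\ov s := s/4$, so that $4\ov s\le s$, and use compactness of $\Sg$ to produce a finite cover $\Sg=\bigcup_{i=1}^N B_{\ov s}(z_i)$ with $N=N(\ov s,\Sg)\in\N$; then set $\ov\e:=\e/(2N)$. The map $x\mapsto\int_{B_{\ov s}(x)}f\,dV_g$ is continuous on the compact set $\Sg$, so the suprema in the greedy step are attained. Given $\ov x_1,\dots,\ov x_k$ with $k<m$, define the closed set $K_k:=\left\{x\in\Sg:\;d(x,\ov x_j)\ge 4\ov s\ \forall\,j\le k\right\}$ and let $\ov x_{k+1}$ be a maximizer of $x\mapsto\int_{B_{\ov s}(x)}f\,dV_g$ on $K_k$ (if $K_k=\emptyset$ the analogous contradiction below is immediate).

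The key step is the following contradiction: suppose $\int_{B_{\ov s}(\ov x_{k+1})}f\,dV_g\le\ov\e$; by maximality the same bound holds for every $x\in K_k$. Covering $K_k$ by the subfamily of $\{B_{\ov s}(z_i)\}_{i=1}^N$ whose elements meet it, one obtains $\int_{K_k}f\,dV_g\le N\ov\e=\e/2$, whence $\int_{\bigcup_{j=1}^k B_{4\ov s}(\ov x_j)}f\,dV_g\ge 1-\e/2$. Since $4\ov s\le s$ and $k\le m$, one may pad the list $\ov x_1,\dots,\ov x_k$ by repetitions to obtain $m+1$ points $\{x_j\}_{j=0}^m$ whose $s$-balls contain $\bigcup_{j=1}^k B_{4\ov s}(\ov x_j)$, contradicting the hypothesis $\int_{\bigcup_{j=0}^m B_s(x_j)}f\,dV_g<1-\e$. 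Consequently the procedure runs for $m$ full steps and delivers $\ov x_1,\dots,\ov x_m$ with $\int_{B_{\ov s}(\ov x_j)}f\,dV_g>\ov\e$ and $d(\ov x_i,\ov x_j)\ge 4\ov s$, the latter immediately yielding $B_{2\ov s}(\ov x_i)\cap B_{2\ov s}(\ov x_j)=\emptyset$.

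The main (and only) subtle point is making all constants independent of $f$: this is achieved by the choices $\ov s=s/4$ and $\ov\e=\e/(2N(\ov s,\Sg))$, which depend only on $s$, $\e$ and the geometry of $\Sg$. No regularity or upper bound on $f$ is needed beyond $f\in L^1(\Sg)$ with unit mass, exactly because the covering count $N$ is a purely metric quantity.
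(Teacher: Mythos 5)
The paper itself does not prove this lemma (it is quoted from \cite{cheikh,mr2}, see also \cite{dja}, where it is established precisely by a covering-and-selection argument), so the comparison is with that standard proof; your greedy scheme is essentially that argument, and its overall structure --- pick successive maximizers of $x\mapsto\int_{B_{\ov s}(x)}f\,dV_g$ subject to $4\ov s$-separation, and rule out stalling by showing that otherwise the whole mass would be trapped in at most $m$ balls of radius $s$, contradicting the hypothesis --- is correct, with constants depending only on $s,\e$ and the geometry of $\Sg$.

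There is, however, one step that does not follow as written. Maximality of $\ov x_{k+1}$ only gives $\int_{B_{\ov s}(x)}f\,dV_g\le\ov\e$ for centers $x\in K_k$, but in the covering step you apply this bound to the balls $B_{\ov s}(z_i)$ of the fixed cover that merely \emph{meet} $K_k$: their centers $z_i$ need not belong to $K_k$, so the inequality $\int_{B_{\ov s}(z_i)\cap K_k}f\,dV_g\le\ov\e$ is not justified (the natural ball centered at a point of $K_k$ containing $B_{\ov s}(z_i)\cap K_k$ has radius $2\ov s$, not $\ov s$, and nothing controls the mass of $2\ov s$-balls). The repair is immediate: fix instead a cover of $\Sg$ by $N=N(\ov s/2,\Sg)$ balls of radius $\ov s/2$; for each ball of the cover meeting $K_k$ choose a point $y_i$ in the intersection, observe that $B_{\ov s/2}(z_i)\subset B_{\ov s}(y_i)$ with $y_i\in K_k$, and sum to get $\int_{K_k}f\,dV_g\le N\ov\e$, now with $\ov\e:=\e/(2N)$ for this $N$, still a purely metric quantity independent of $f$. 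With this modification the rest of your proof (continuity of $x\mapsto\int_{B_{\ov s}(x)}f\,dV_g$ for $f\in L^1$ so that the maxima are attained, the padding to $m+1$ points using $4\ov s\le s$, the resulting contradiction with the hypothesis, and the disjointness of the $2\ov s$-balls coming from the $4\ov s$-separation) goes through verbatim.
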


\

Now we have enough tools to obtain information on the structure of very low sub-levels of $\dis{J_\rh}$:
\begin{lem}\label{l:II<-M}
Suppose $\dis{\rh_1\in(4k\pi,4(k+1)\pi)}$ and $\dis{\rh_2\in(4l\pi,4(l+1)\pi)}$. Then, for any $\dis{\e>0,\;s>0}$, there exists $\dis{L=L(\e,s)>0}$ such that for any $\dis{u\in J_\rh^{-L}}$ there are either some $\dis{\{x_i\}_{i=1}^k\subset\Sg}$ verifying
$$\frac{\int_{\bigcup_{i=1}^kB_s(x_i)}\wtilde h_1e^{u_1}dV_g}{\int_\Sg\wtilde h_1e^{u_1}dV_g}\ge1-\e$$
or some $\dis{\{y_j\}_{j=1}^l\subset\Sg}$ verifying
$$\frac{\int_{\bigcup_{j=1}^lB_s(y_j)}\wtilde h_2e^{u_2}dV_g}{\int_\Sg\wtilde h_2e^{u_2}dV_g}\ge1-\e.$$
\end{lem}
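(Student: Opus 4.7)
The plan is to argue by contradiction. Suppose the statement fails: there exist $\varepsilon_0, s_0 > 0$ and a sequence $u^{(n)} = (u_1^{(n)}, u_2^{(n)})$ with $J_\rho(u^{(n)}) \to -\infty$ such that for every $n$ neither $\wtilde h_1 e^{u_1^{(n)}}$ concentrates in any $k$ balls of radius $s_0$ up to an $\varepsilon_0$-error, nor $\wtilde h_2 e^{u_2^{(n)}}$ in any $l$ such balls. The goal is to produce a uniform lower bound $J_\rho(u^{(n)}) \geq -C$, which contradicts the divergence to $-\infty$.

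First I would apply the covering result of Lemma \ref{l:er} to the normalized densities $\wtilde h_i e^{u_i^{(n)}}/\int_\Sg \wtilde h_i e^{u_i^{(n)}} dV_g$, turning the non-concentration assumption into the existence of $k+1$ points $\{\bar x_i^{(n)}\}$ and $l+1$ points $\{\bar y_j^{(n)}\}$ in $\Sg$, together with constants $\bar\varepsilon, \bar s > 0$ independent of $n$, such that the balls $B_{\bar s}(\bar x_i^{(n)})$ (resp.\ $B_{\bar s}(\bar y_j^{(n)})$) are pairwise at distance at least $2\bar s$ and each carries a fraction $\geq \bar\varepsilon$ of the total mass $\int_\Sg \wtilde h_i e^{u_i^{(n)}} dV_g$ of the corresponding density.

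Next I would plug these $k+1$ and $l+1$ separated configurations into the improved Moser--Trudinger inequality (Lemma \ref{l:imprc}) with a small $\varepsilon'>0$, obtaining
\begin{equation*}
(k+1) A_1^{(n)} + (l+1) A_2^{(n)} \leq \frac{1+\varepsilon'}{4\pi} \int_\Sg Q(u_1^{(n)}, u_2^{(n)}) \, dV_g + C_{\varepsilon'}, \qquad A_i^{(n)} := \log \int_\Sg \wtilde h_i \, e^{u_i^{(n)} - \overline{u_i^{(n)}}} dV_g.
\end{equation*}
This is the key quantitative improvement: the prefactor $4\pi$ in the Jost--Wang inequality (Theorem \ref{th:jw}) is effectively multiplied by $k+1$ on the first component and $l+1$ on the second. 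I would then translate this into a lower bound on $J_\rho(u^{(n)}) = \int_\Sg Q \, dV_g - \rho_1 A_1^{(n)} - \rho_2 A_2^{(n)}$. Set $\lambda := \max\!\left(\tfrac{\rho_1}{k+1}, \tfrac{\rho_2}{l+1}\right) < 4\pi$, where the strict inequality comes from $\rho_1 < 4(k+1)\pi$ and $\rho_2 < 4(l+1)\pi$. Jensen's inequality together with the integrability of $\log \wtilde h_i$ near the singular points (where $\wtilde h_i \simeq d(\cdot,p_j)^{2\alpha_{i,j}}$) yield the uniform lower bound $A_i^{(n)} \geq \int_\Sg \log \wtilde h_i \, dV_g \geq -C$. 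Writing
\begin{equation*}
\rho_1 A_1^{(n)} + \rho_2 A_2^{(n)} = \lambda \bigl[(k+1) A_1^{(n)} + (l+1) A_2^{(n)}\bigr] - [\lambda(k+1) - \rho_1] A_1^{(n)} - [\lambda(l+1) - \rho_2] A_2^{(n)},
\end{equation*}
the last two terms are bounded above: the coefficients $\lambda(k+1) - \rho_1, \lambda(l+1) - \rho_2$ are non-negative by definition of $\lambda$, so the uniform lower bound on $A_i^{(n)}$ produces a constant upper bound for those terms. Combining with the improved Moser--Trudinger estimate and choosing $\varepsilon'$ small enough that $\lambda(1+\varepsilon') < 4\pi$, one finds
\begin{equation*}
J_\rho(u^{(n)}) \geq \left(1 - \frac{\lambda(1+\varepsilon')}{4\pi}\right) \int_\Sg Q(u_1^{(n)}, u_2^{(n)}) \, dV_g - C' \geq -C',
\end{equation*}
since $Q \geq 0$, which is the desired contradiction.

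The chief obstacle I anticipate is the decoupling in the final step: the improved Moser--Trudinger estimate couples $A_1^{(n)}$ and $A_2^{(n)}$ with the fixed weights $k+1$ and $l+1$, whereas $J_\rho$ weighs them with the distinct parameters $\rho_1$ and $\rho_2$. The mismatch is paid for by the one-sided Jensen lower bound on each $A_i^{(n)}$, which is available precisely because $\log \wtilde h_i$ is integrable ($\alpha_{i,j} \geq 0$).
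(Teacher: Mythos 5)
Your proposal is correct and follows essentially the same route as the paper: argue by contradiction, upgrade the non-concentration hypothesis to $k+1$ and $l+1$ separated balls of definite mass via Lemma \ref{l:er}, feed these into the improved Moser--Trudinger inequality of Lemma \ref{l:imprc}, and close the estimate with the Jensen lower bound $\log\int_\Sg\wtilde h_ie^{u_i-\ov{u_i}}dV_g\ge\int_\Sg\log\wtilde h_i\,dV_g$. Your introduction of $\lambda=\max\{\rho_1/(k+1),\rho_2/(l+1)\}$ is only a cosmetic repackaging of the paper's direct splitting of the coefficients $\frac{4\pi(k+1)}{1+\wtilde\e}-\rho_1$ and $\frac{4\pi(l+1)}{1+\wtilde\e}-\rho_2$, so no substantive difference.
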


\begin{pf}
Suppose by contradiction that the statement is not true, that is
there are $\dis{\e_1,\e_2>0,\;s_1,s_2>0}$, and
$\dis{\{u_n=(u_{1,n},u_{2,n})\}_{n\in\N}\subset H^1(\Sg)\times
H^1(\Sg)}$ such that
$\dis{J_\rh(u_{1,n},u_{2,n})\underset{n\to+\infty}\longrightarrow
-\infty}$ and
$$\frac{\int_{\bigcup_{i=1}^kB_{s_1}\left(x_i\right)}\wtilde h_1e^{u_{1,n}}dV_g}{\int_\Sg\wtilde h_1e^{u_{1,n}}dV_g}<1-\e_1;\qquad\frac{\int_{\bigcup_{j=1}^lB_{s_2}\left(y_j\right)}\wtilde h_2e^{u_{2,n}}dV_g}{\int_\Sg\wtilde h_2e^{u_{2,n}}dV_g}<1-\e_2,\qquad\forall\;\{x_i\}_{i=1}^k,\{y_j\}_{j=1}^l\subset\Sg.$$
Then, we may apply twice Lemma $\dis{\ref{l:er}}$ with $\dis{f=\frac{\wtilde h_ie^{u_i}}{\int_\Sg\wtilde h_ie^{u_i}dV_g}}$, $\dis{\wtilde\e=\e_i}$, $\dis{\wtilde s=s_i}$ and find $\dis{\ov\e_1,\ov\e_2>0,\;\ov s_1,\ov s_2>0}$ and $\dis{\{\ov x_i\}_{i=0}^k,\{\ov y_j\}_{j=0}^l}$ with
$$\int_{B_{\ov s_1}\left(\ov x_i\right)}\wtilde h_1e^{u_1}dV_g\ge\ov\e_1\int_\Sg\wtilde h_1e^{u_1}dV_g\qquad\qquad\forall\;i\in\{0,\dots,k\};$$
$$\int_{B_{\ov s_2}\left(\ov y_j\right)}\wtilde h_2e^{u_2}dV_g\ge\ov\e_2\int_\Sg\wtilde h_2e^{u_2}dV_g\qquad\qquad\forall\;j\in\{0,\dots,l\},$$
and
$$B_{2\ov s_1}\left(\ov x_i\right)\cap B_{2\ov s_1}\left(\ov x_j\right)=\emptyset\qquad\qquad\forall\;i,j\in\{0,\dots,k\}\hbox{ with }i\ne j;$$
$$B_{2\ov s_2}\left(\ov y_j\right)\cap B_{2\ov s_2}\left(\ov y_j\right)=\emptyset\qquad\qquad\forall\;i,j\in\{0,\dots,l\}\hbox{ with }i\ne j.$$
Hence, we obtain an improved Moser-Trudinger inequality for $\dis{u_n=(u_{1,n},u_{2,n})}$ applying Lemma $\dis{\ref{l:imprc}}$ with $\dis{\wtilde\d:=2\min\{\ov s_1,\ov s_2\}}$, $\dis{\wtilde\th:=\min\{\ov\e_1,\ov\e_2\}}$ and $\dis{\O_{1,i}:=B_{\ov s_1}\left(\ov x_i\right)}$, $\dis{\O_{2,j}:=B_{\ov s_2}\left(\ov y_j\right)}$.\\
Moreover, Jensen's inequality gives
$$\int_\Sg\wtilde h_ie^{u_{i,n}-\ov{u_{i,n}}}dV_g=\int_\Sg e^{\log\wtilde h_i+u_{i,n}-\ov{u_{i,n}}}dV_g\ge e^{\int_\Sg\log\wtilde h_idV_g},$$
so, choosing
$$\wtilde\e\in\left(0,\min\left\{\frac{4\pi(k+1)}{\rh_1}-1,\frac{4\pi(l+1)}{\rh_2}-1\right\}\right)$$
we get
\begin{eqnarray*}
-\infty&\underset{n\to+\infty}\longleftarrow& J_\rh(u_{1,n},u_{2,n})\\
&\ge&\left(\frac{4\pi(k+1)}{1+\wtilde\e}-\rh_1\right)\log\int_\Sg\wtilde h_1e^{u_{1,n}-\ov{u_{1,n}}}dV_g+\left(\frac{4\pi(l+1)}{1+\wtilde\e}-\rh_2\right)\log\int_\Sg\wtilde h_2e^{u_{2,n}-\ov{u_{2,n}}}dV_g-C\\
&\ge&\left(\frac{4\pi(k+1)}{1+\wtilde\e}-\rh_1\right)\int_\Sg\log\wtilde h_1dV_g+\left(\frac{4\pi(l+1)}{1+\wtilde\e}-\rh_2\right)\int_\Sg\log\wtilde h_2dV_g-C\\
&\ge&-C
\end{eqnarray*}
that is a contradiction.
\end{pf}

\

\noindent An immediate consequence of the previous Lemma is that at least one of the two $\dis{\wtilde h_ie^{u_i}}$'s
(once normalized in $L^1$) has to be very close respectively to the sets of $\dis{k}$-barycenters or $\dis{l}$-barycenters over $\dis{\Sg}$:
\begin{pro}\label{p:altern}
Suppose $\dis{\rh_1\in(4k\pi,4(k+1)\pi)}$ and
$\dis{\rh_2\in(4l\pi,4(l+1)\pi)}$. Then, for any
$\displaystyle{\e>0}$, there exists $\dis{L>0}$ such that any
$\dis{u\in J_\rh^{-L}}$ verifies either $$\dkr\left(\frac{\wtilde
h_1e^{u_1}}{\int_\Sg\wtilde
h_1e^{u_1}dV_g},\Sg_k\right)<\e\qquad\qquad\text{or}\qquad\qquad
\dkr\left(\frac{\wtilde h_2e^{u_2}}{\int_\Sg\wtilde
h_2e^{u_2}dV_g},\Sg_l\right)<\e.$$

\end{pro}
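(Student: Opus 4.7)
\medskip

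\noindent\textbf{Proof proposal.} The plan is to reduce the statement to Lemma \ref{l:II<-M} and then convert concentration of mass in small balls to closeness in Kantorovich--Rubinstein distance via an explicit approximating barycenter.

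\medskip

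First, given $\e > 0$, I would choose parameters $\e_0 > 0$ and $s_0 > 0$ (both small, to be tuned at the end as functions of $\e$ and of $\mathrm{diam}(\Sg)$) and apply Lemma \ref{l:II<-M} with these parameters. This furnishes $L = L(\e_0,s_0)$ such that for every $u = (u_1,u_2) \in J_\rho^{-L}$ one has the alternative: either there exist $\{x_i\}_{i=1}^k \subset \Sg$ with
$$
\int_{\bigcup_{i=1}^k B_{s_0}(x_i)} \wtilde h_1 e^{u_1} \, dV_g \ge (1-\e_0) \int_\Sg \wtilde h_1 e^{u_1} \, dV_g,
$$
or the analogous statement holds for the second component with points $\{y_j\}_{j=1}^l$. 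By symmetry it is enough to treat the first case and prove $\dkr\bigl(f_1, \Sg_k\bigr) < \e$, where $f_1 := \frac{\wtilde h_1 e^{u_1}}{\int_\Sg \wtilde h_1 e^{u_1} dV_g}$ is a probability measure.

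\medskip

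Next, I would construct a candidate element of $\Sg_k$. Partition $\bigcup_{i=1}^k B_{s_0}(x_i)$ into disjoint Borel sets $A_1,\dots,A_k$ with $A_i \subseteq B_{s_0}(x_i)$ (for instance $A_i := B_{s_0}(x_i) \setminus \bigcup_{j<i} B_{s_0}(x_j)$), and set $A_0 := \Sg \setminus \bigcup_i A_i$. Put $t_i := \int_{A_i} f_1 \, dV_g$ and define the normalized weights $\wtilde t_i := t_i/(1-t_0)$, so that $\s := \sum_{i=1}^k \wtilde t_i \, \d_{x_i} \in \Sg_k$ (if $t_0 = 1$ the alternative is vacuous, so we may assume $t_0 \le \e_0 < 1$). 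Then for any Lipschitz $\var : \Sg \to \R$ with $\|\var\|_{\mathrm{Lip}} \le 1$, I normalize by $\var(x_1) = 0$ so that $|\var| \le \mathrm{diam}(\Sg)$, and estimate
$$
\left| \int_\Sg \var f_1 \, dV_g - \int_\Sg \var \, d\s \right|
\le \sum_{i=1}^k \int_{A_i} |\var(x) - \var(x_i)| \, f_1 \, dV_g \,+\, \int_{A_0} |\var| f_1 \, dV_g \,+\, \sum_{i=1}^k |t_i - \wtilde t_i|\, |\var(x_i)|.
$$
Using $|\var(x) - \var(x_i)| \le s_0$ on $A_i$, $|\var| \le \mathrm{diam}(\Sg)$, and $|t_i - \wtilde t_i| \le t_i \frac{t_0}{1-t_0}$, the right-hand side is bounded by $s_0 + 2 \, \mathrm{diam}(\Sg) \cdot \e_0 /(1-\e_0)$. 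Choosing $s_0$ and $\e_0$ small enough in terms of $\e$ makes this less than $\e$, which gives $\dkr(f_1, \s) < \e$ and hence $\dkr(f_1, \Sg_k) < \e$.

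\medskip

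I expect no genuine obstacle here beyond bookkeeping: once Lemma \ref{l:II<-M} is granted, the result is a standard ``concentration $\Rightarrow$ proximity to barycenters'' argument. The only subtlety is handling the residual mass $t_0$ on $A_0$, which requires fixing $\var(x_1) = 0$ so that the Lipschitz bound $\|\var\|_{\mathrm{Lip}} \le 1$ yields an $L^\infty$ bound, and renormalizing the weights $t_i$ to $\wtilde t_i$ so that $\s$ is an honest element of $\Sg_k$.
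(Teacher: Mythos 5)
Your proposal is correct and follows essentially the same route as the paper: apply Lemma \ref{l:II<-M}, build an explicit barycenter from the mass of $\wtilde h_1 e^{u_1}$ in the balls $B_{s_0}(x_i)$, and estimate the Kantorovich--Rubinstein distance by testing against Lipschitz functions. The only cosmetic differences are that the paper assigns the residual mass in equal shares $\tfrac{1}{k}$ to the atoms instead of renormalizing by $1-t_0$, and bounds $\|\var\|_{L^\infty}$ directly by the $Lip(\Sg)$-norm rather than via your normalization $\var(x_1)=0$ and the diameter of $\Sg$.
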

\begin{pf}
We apply Lemma $\dis{\ref{l:II<-M}}$ with $\dis{\wtilde\e=\frac{\e}4,\;\wtilde s=\frac{\e}2}$; it is not restrictive to suppose that the first alternative occurs and that $\dis{\int_\Sg\wtilde h_1e^{u_1}dV_g=1}$. Hence we get $\dis{L}$ and $\dis{\{x_i\}_{i=1}^k}$ and we define, for such an $\dis{u=(u_1,u_2)\in J_\rho^{-L}}$,
$$\s_1(u)=\sum_{i=1}^{k}t_i(u)\d_{x_i}\in\Sg_k\qquad\text{where }t_i(u)=\int_{B_{\wtilde s}(x_i)\backslash\bigcup_{j=1}^{i-1}B_{\wtilde s}(x_j)}\wtilde h_1e^{u_1}dV_g+\frac{1}{k}\int_{\Sg\backslash\bigcup_{j=1}^kB_{\wtilde s}(x_j)}\wtilde h_1e^{u_1}dV_g.$$
Then, for any $\dis{\phi\in Lip(\Sg)}$,
\begin{eqnarray*}
&& \left|\int_{\Sg\backslash\bigcup_{i=1}^kB_{\wtilde s}(x_i)}\left(\frac{\wtilde h_1e^{u_1}}{\int_\Sg\wtilde h_1e^{u_1}dV_g}-\s_1(u)\right)\phi dV_g\right|=\\
&=&\int_{\Sg\backslash\bigcup_{i=1}^kB_{\wtilde s}(x_i)}\wtilde h_1e^{u_1}\phi dV_g\le\int_{\Sg\backslash\bigcup_{i=1}^kB_{\wtilde s}(x_i)}\wtilde h_1e^{u_1}dV_g\|\phi\|_{L^\infty(\Sg)}<\wtilde\e\|\phi\|_{L^\infty(\Sg)}
\end{eqnarray*}
and
\begin{eqnarray*}
&&\left|\int_{\bigcup_{i=1}^kB_{\wtilde s}(x_i)}\left(\frac{\wtilde h_1e^{u_1}}{\int_\Sg\wtilde h_1e^{u_1}dV_g}-\s_1(u)\right)\phi dV_g\right|\\
&=&\left|\int_{\bigcup_{i=1}^kB_{\wtilde s}(x_i)}\wtilde h_1e^{u_1}\phi dV_g-\sum_{i=1}^k\left(\int_{B_{\wtilde s}(x_i)\backslash\bigcup_{j=1}^{i-1}B_{\wtilde s}(x_j)}\wtilde h_1e^{u_1}dV_g+\frac{1}k\int_{\Sg\backslash\bigcup_{j=1}^kB_{\wtilde s}(x_j)}\wtilde h_1e^{u_1}dV_g\right)\phi(x_i)\right|\\
&=&\left|\int_{\bigcup_{i=1}^k\left(B_{\wtilde s}(x_i)\backslash\bigcup_{j=1}^{i-1}B_{\wtilde s}(x_j)\right)}\wtilde h_1e^{u_1}(\phi-\phi(x_i))dV_g-\int_{\Sg\backslash\bigcup_{j=1}^kB_{\wtilde s}(x_j)}\wtilde h_1e^{u_1}dV_g\phi(x_i)\right|\\
&\le&\wtilde s\|\n\phi\|_{L^\infty(\Sg)}\int_{\bigcup_{i=1}^k\left(B_{\wtilde s}(x_i)\backslash\bigcup_{j=1}^{i-1}B_{\wtilde s}(x_j)\right)}\wtilde h_1e^{u_1}dV_g+\|\phi\|_{L^\infty(\Sg)}\int_{\Sg\backslash\bigcup_{j=1}^kB_{\wtilde s}(x_j)}\wtilde h_1e^{u_1}dV_g\\
&<&\wtilde s\|\n\phi\|_{L^\infty(\Sg)}+\wtilde\e\|\phi\|_{L^\infty(\Sg)}.
\end{eqnarray*}
Hence we can conclude the proof:
\begin{eqnarray*}
&& \dkr\left(\frac{\wtilde h_1e^{u_1}}{\int_\Sg\wtilde h_1e^{u_1}dV_g},\Sg_k\right)\le \dkr\left(\frac{\wtilde h_1e^{u_1}}{\int_\Sg\wtilde h_1e^{u_1}dV_g},\s_1(u)\right)=\sup_{\|\phi\|_{Lip(\Sg)}=1}\left|\int_\Sg\left(\frac{\wtilde h_1e^{u_1}}{\int_\Sg\wtilde h_1e^{u_1}dV_g}-\s_1(u)\right)\phi dV_g\right|\\
&=&\sup_{\|\phi\|_{Lip(\Sg)}=1}\left|\int_{\Sg\backslash\bigcup_{i=1}^kB_{\wtilde s}(x_i)}\left(\frac{\wtilde h_1e^{u_1}}{\int_\Sg\wtilde h_1e^{u_1}dV_g}-\s_1(u)\right)\phi dV_g\right|\\
&+&\sup_{\|\phi\|_{Lip(\Sg)}=1}\left|\int_{\bigcup_{i=1}^kB_{\wtilde s}(x_i)}\left(\frac{\wtilde h_1e^{u_1}}{\int_\Sg\wtilde h_1e^{u_1}dV_g}-\s_1(u)\right)\phi dV_g\right|\\
&<&\sup_{\|\phi\|_{Lip(\Sg)}=1}2\wtilde\e\|\phi\|_{L^\infty(\Sg)}+\wtilde s\|\n\phi\|_{L^\infty(\Sg)}\le2\wtilde\e+\wtilde s=\e,
\end{eqnarray*}
as desired.
\end{pf}

\

With the previous estimates, it is now easy to define a projection
map in the following form:

\begin{pro}\label{p:proj}
Suppose $\rho_1 \in (4 k \pi, 4 (k+1) \pi)$, $\rho_2 \in (4 l \pi, 4 (l+1) \pi)$ and let $\Phi_\lambda$ be as in $\eqref{e:pl}$. Then for
$L$ sufficiently large there exists a continuous map
$$
\Psi : J_\rho^{-L} \to (\g_1)_k * (\g_2)_l
$$
such that the composition
$$
(\g_1)_k * (\g_2)_l \quad \stackrel{\Phi_{\l}}{\longrightarrow}\quad
J_{\rho}^{-L} \quad\stackrel{\Psi}{\longrightarrow}\quad (\g_1)_k * (\g_2)_l
$$
is homotopically equivalent to the identity map on $(\g_1)_k *
(\g_2)_l$ provided that $\l$ is large enough.
\end{pro}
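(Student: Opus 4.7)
Write $f_i(u) := \widetilde h_i e^{u_i}/\int_\Sg \widetilde h_i e^{u_i}\,dV_g$, $d_i(u) := \dkr(f_i(u), \Sg_{\kappa_i})$ with $\kappa_1 = k$, $\kappa_2 = l$, and let $\e_0$ be the constant of Proposition \ref{p:projbar}. By Proposition \ref{p:altern} applied with $\e = \e_0/4$, choose $L$ so large that $\min\{d_1(u), d_2(u)\} < \e_0/4$ for every $u \in J_\rho^{-L}$. On the open set $\{d_i(u) < \e_0\}$ define
$$
\sigma_i(u) := (\Pi_i)_\ast\, \psi_{\kappa_i}(f_i(u)) \in (\gamma_i)_{\kappa_i},
$$
which is continuous thanks to Proposition \ref{p:projbar} combined with the $Lip'$-continuity of the push-forward under the Lipschitz retraction $\Pi_i$. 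Fix a continuous cut-off $\chi \colon [0,\infty)\to[0,1]$ with $\chi \equiv 1$ on $[0,\e_0/4]$ and $\chi \equiv 0$ on $[\e_0/2,\infty)$; using it, assemble a continuous join coordinate $r(u) \in [0,1]$ out of the values $d_1(u), d_2(u)$ (as a suitable $\chi$-weighted ratio) so that $r(u)$ is pinned to the endpoint annihilating the ill-defined $\sigma_i$ whenever $d_i(u) \ge \e_0/2$, and so that in the region where both $d_i$ are small the quantitative bubble rates obtained from Proposition \ref{p:en} force $r(\Phi_\lambda(\zeta)) \to r$ as $\lambda \to \infty$. Set
$$
\Psi(u) := (1 - r(u))\, \sigma_2(u) + r(u)\, \sigma_1(u) \in (\gamma_1)_k \ast (\gamma_2)_l;
$$
wherever one $\sigma_i(u)$ is undefined, the cut-off pins $r(u)$ to the extreme that kills its coefficient and the join equivalence absorbs the degenerate factor, producing a continuous map $\Psi$.

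\textbf{Computing $\Psi \circ \Phi_\lambda$.} For $\zeta = (1-r)\sigma_2 + r \sigma_1$, an inspection of the bubble structure of the test function \eqref{e:pl}, analogous to the calculations in the proof of Proposition \ref{p:en}, yields: whenever a scale $\lambda_{i,r} \in \{(1-r)\lambda, r\lambda\}$ diverges, the corresponding measure $f_i(\Phi_\lambda(\zeta))$ converges weakly to $\sigma_i$ with the quantitative bound $d_i(\Phi_\lambda(\zeta)) = O(1/\lambda_{i,r})$; at the degenerate extreme $\lambda_{i,r} \to 0$ the corresponding $f_i$ spreads to essentially the uniform measure on $\Sg$, whose $\dkr$-distance from $\Sg_{\kappa_i}$ stays bounded below. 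Since $\sigma_i$ is supported on $\gamma_i$, both $\psi_{\kappa_i}$ and $(\Pi_i)_\ast$ fix it, hence $\sigma_i(\Phi_\lambda(\zeta)) \to \sigma_i$ uniformly on the region where $f_i$ concentrates. Combined with the defining properties of $r(u)$ this yields $\Psi(\Phi_\lambda(\zeta)) \to \zeta$ uniformly on the compact space $(\gamma_1)_k \ast (\gamma_2)_l$ as $\lambda \to +\infty$.

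\textbf{Homotopy to the identity.} By Remark \ref{r:sphere} the join $(\gamma_1)_k \ast (\gamma_2)_l$ is homeomorphic to a sphere $S^{2k+2l-1}$ and is in particular an ANR; hence any two continuous maps into it that are uniformly close are homotopic. For $\lambda$ sufficiently large one may thus build an explicit homotopy from $\Psi \circ \Phi_\lambda$ to the identity by interpolating linearly in the join coordinate $r$ and geodesically on each $\gamma_i \cong S^1$ inside each barycentric entry of $\sigma_1(u), \sigma_2(u)$. The delicate point of this strategy is the behaviour of $\Psi \circ \Phi_\lambda$ near the endpoints of the join, where one component of the test function \eqref{e:pl} no longer concentrates and the corresponding $\sigma_i(\Phi_\lambda(\zeta))$ is ill-defined; the role of the cut-off $\chi$ paired with the equivalence relation defining the join is precisely to absorb this degeneracy continuously, and verifying that all the constructions mesh smoothly across these transition regions is the main technical obstacle.
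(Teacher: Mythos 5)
Your construction of $\Psi$ is essentially the paper's: the same distances $d_1,d_2$, a cut-off of a ratio of them as join coordinate, and the push-forwards $(\Pi_i)_*\psi_{\kappa_i}$, with Proposition \ref{p:altern} guaranteeing that at least one projection is defined and the cut-off killing the possibly undefined one. The genuine gap is in the homotopy step. You claim that $\Psi\circ\Phi_\lambda\to\mathrm{id}$ \emph{uniformly} and then invoke the ANR property of $(\g_1)_k*(\g_2)_l\simeq S^{2k+2l-1}$. The barycentric entries do converge ($\psi_k(f_1(\Phi_\lambda\zeta))\to\s_1$ etc., as in the paper), but the join coordinate does not: all the information you have about the test functions is of the form $c_0\min\{1,1/\l_{i,r}\}\le d_i\le C_0/\l_{i,r}$ (this is the content of Lemma \ref{lemmino} and Proposition \ref{p:dist}, which you would in any case have to prove -- Proposition \ref{p:en} is an energy estimate and gives no $\dkr$ information, and the lower bound is needed at all scales, not only when $\l_{i,r}\to0$). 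From such two-sided bounds any ratio-type quantity built from $d_1(u),d_2(u)$ reproduces $r$ only up to the fixed multiplicative gap $C_0/c_0$; for instance $d_1/(d_1+d_2)$ evaluated at $\Phi_\lambda(\zeta)$ stays in an interval around $r$ whose length does not shrink as $\l\to+\infty$. Hence there is no continuous $r(u)$ depending on $d_1,d_2$ for which $r(\Phi_\lambda(\zeta))\to r$, the limit of $\Psi\circ\Phi_\lambda$ is not the identity in the join coordinate, and the uniform-closeness hypothesis of your ANR argument is simply not available. Moreover the endpoint regions, which you correctly single out as the delicate point (there one projection is undefined exactly where the test function's other component stops concentrating), are acknowledged but not actually handled.

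The paper closes precisely this gap with a two-stage explicit homotopy instead of a closeness argument. First one checks, using both the upper and the lower bounds of Proposition \ref{p:dist}, that for $r<\d$ (resp.\ $r>1-\d$) the cut-off value $\wtilde r$ is exactly $0$ (resp.\ $1$), so the ill-defined projection always carries zero weight thanks to the plateaus of $f$ in \eqref{def:f}. Then $H_1$ lets the concentration parameter run to infinity ($\l/\mu$, $\mu\to0$) so that the barycentric entries converge to $\s_1,\s_2$ by Proposition \ref{p:projbar}, while the join coordinate is interpolated linearly from $\wtilde r_\l$ to $f(r)$; a second homotopy $H_2$ deforms $f(r)$ to $r$. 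If you want to keep your scheme, you must either replace the uniform-closeness claim by such an interpolation of the join coordinate (checking well-definedness across the transition regions), or find some other mechanism forcing the join coordinate of $\Psi\circ\Phi_\lambda$ to converge to $r$, which the distance data alone cannot give.
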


\noindent The rest of this section is devoted to the proof of this
proposition.

\

\noindent By Propositions \ref{p:altern} and \ref{p:proj} we know
that either $\dis \psi_k\left(\frac{\wtilde{h}_1
e^{u_1}}{\int_\Sg\wtilde{h}_1 e^{u_1}dV_g}\right)$ or $\dis
\psi_l\left(\frac{\wtilde{h}_2 e^{u_2}}{\int_\Sg\wtilde{h}_2
e^{u_2}dV_g}\right)$ is well defined (or both), since either $\dis
\dkr\left(\frac{\wtilde{h}_1 e^{u_1}}{\int_\Sg\wtilde{h}_1
e^{u_1}dV_g}, \Sigma_k\right) < \e$ or $\dis
\dkr\left(\frac{\wtilde{h}_2 e^{u_2}}{\int_\Sg\wtilde{h}_2
e^{u_2}dV_g}, \Sigma_l\right) < \e$ (or both).

We then set
$$
  d_1 = \dkr\left(\frac{\wtilde{h}_1 e^{u_1}}{\int_\Sg\wtilde{h}_1 e^{u_1}dV_g}, \Sigma_k\right); \qquad \qquad d_2 = \dkr\left(\frac{\wtilde{h}_2 e^{u_2}}{\int_\Sg\wtilde{h}_2 e^{u_2}dV_g}, \Sigma_l\right),
$$
and consider a function $\wtilde r = \wtilde r(d_1, d_2)$ defined as

\begin{equation} \label{def:r} \wtilde{r}(d_1,d_2)=f\left (\frac{d_1}{d_1+d_2} \right
),
\end{equation}
where $f$ is such that

\begin{equation} \label{def:f} f(z)= \left \{ \begin{array}{ll} 0 & \mbox{ if } z \in[0,1/4],\\
2z-\frac{1}2 & \mbox{ if } z \in (1/4, 3/4),\\ 1 &
\mbox{ if } z \in [3/4,1].\end{array} \right.\end{equation}

%

Consider the global retractions $\Pi_1 : \Sigma \to \gamma_1$ and
$\Pi_2 : \Sigma \to \gamma_2$ given in Lemma \ref{new}, and define:
\begin{equation}
\label{eq:psi}
  \Psi(u_1, u_2) = (1- \wtilde r) (\Pi_1)_* \psi_k \left(\frac{\wtilde{h}_1 e^{u_1}}{\int_\Sg\wtilde{h}_1 e^{u_1}dV_g}\right)+ \wtilde r (\Pi_2)_* \psi_l \left(\frac{\wtilde{h}_2
  e^{u_2}}{\int_\Sg\wtilde{h}_2 e^{u_2}dV_g}\right),
\end{equation}
where $(\Pi_i)_*$ stands for the push-forward of the map $\Pi_i$.
Notice that when one of the two $\psi$'s is not defined the other necessarily
is, and the map is well defined by the equivalence relation.

In what follows, we are going to need the following auxiliary lemma:

\begin{lem} \label{lemmino} Given $n\in \N$, define $\chi_\l$ as
$\dis{\chi_\l(x)= \sum_{i=1}^{n} t_i \left( \frac{\l}{1 + \l^2
d(x,x_i)^2} \right)^2}$. Take a $L^{\infty}$ function $\tau:
\Sigma \to \R$ satisfying:

\begin{enumerate}

\item[i)]  $\tau(x) >m>0$ for all $x \in B(x_i,\delta)$.

\item[ii)] $|\tau(x)| \leq M$ for all $x \in \Sigma$.

\end{enumerate}

Then, there exist constants $c>0$, $C>0$ depending only on
$\Sigma$, $m$, $M$, such that for every $\lambda>0$,

$$ c_0 \min \left \{ 1, \frac{1}{\l} \right \} < \dkr \left (\frac{\tau \, \chi_{\lambda}}{\int_{\Sigma} \tau \, \chi_{\l} \, dV_g}, \Sigma_n \right ) < \frac{C_0}{\l}. $$

\end{lem}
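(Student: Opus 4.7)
The plan is to obtain the two bounds by complementary arguments, both organized around the density $f := \tau\chi_\lambda/\int_\Sg \tau\chi_\lambda \, dV_g$ and the sharp pointwise estimate $\|f\|_{L^\infty} \le C\max\{1,\lambda^2\}$. The latter is a consequence of $\|\tau\chi_\lambda\|_{L^\infty} \le M\lambda^2$ combined with the lower bound
$$
\int_\Sg \tau\chi_\lambda \, dV_g \ge m\sum_{i=1}^n t_i \int_{B_\delta(x_i)} \chi_\lambda^i \, dV_g \ge c\min\{1,\lambda^2\},
$$
writing $\chi_\lambda^i(x) := \lambda^2/(1+\lambda^2 d(x,x_i)^2)^2$; indeed a direct computation in normal coordinates gives $\int_{B_\delta(x_i)} \chi_\lambda^i \, dV_g = \pi\lambda^2\delta^2/(1+\lambda^2\delta^2)$.

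For the upper bound I would pick the Voronoi cells $A_i$ of the $x_i$ and the weights $t_i' := \int_{A_i} f\,dV_g$, so that $\mu := \sum_i t_i'\delta_{x_i}\in\Sigma_n$. For any $1$-Lipschitz $\phi$,
$$
\left|\int_\Sg \phi \, d(f-\mu)\right| = \left|\sum_i\int_{A_i}\bigl(\phi(x)-\phi(x_i)\bigr)f\,dV_g\right| \le \sum_i\int_{A_i} d(x,x_i)f\,dV_g.
$$
Splitting $\chi_\lambda = \sum_k t_k\chi_\lambda^k$, the diagonal term $\int_{A_i} d(x,x_i)\chi_\lambda^i\,dV_g = O(\log\lambda/\lambda^2)$ by the change of variable $u=\lambda r$ in normal coordinates, while for $k\ne i$ the bubble $\chi_\lambda^k$ is uniformly $O(1/\lambda^2)$ on $A_i$ since $d(x,x_k)\ge\delta/2$ there. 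Dividing by the lower bound on $\int_\Sg\tau\chi_\lambda\,dV_g$ yields $\dkr(f,\Sigma_n) \le C_0/\lambda$ for $\lambda\ge 1$, and the bound is trivial for $\lambda<1$ from $\dkr\le\mathrm{diam}(\Sg)$.

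For the lower bound I would use the Kantorovich duality $\dkr(f,\mu) = W_1(f,\mu)$. Any coupling decomposes $f = \sum_j f_j$ with $\int_\Sg f_j\,dV_g = s_j$ where $\mu = \sum_j s_j\delta_{y_j}\in\Sigma_n$, with cost $\sum_j\int_\Sg d(x,y_j)f_j\,dV_g$. A layer-cake identity combined with the density estimate $f_j(B_t(y_j))\le\|f\|_{L^\infty}|B_t(y_j)|\le C\|f\|_{L^\infty}t^2$, optimized at the truncation $T_j = \sqrt{s_j/(C\|f\|_{L^\infty})}$, gives
$$
\int_\Sg d(x,y_j)f_j\,dV_g \ge \frac{c\,s_j^{3/2}}{\sqrt{\|f\|_{L^\infty}}}.
$$
Summing over the at most $n$ atoms and invoking the convexity inequality $\sum_j s_j^{3/2}\ge 1/\sqrt{n}$ (Jensen applied to $x\mapsto x^{3/2}$), I obtain $W_1(f,\mu) \ge c/\sqrt{\|f\|_{L^\infty}}$ uniformly over $\mu\in\Sigma_n$. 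Plugging in $\sqrt{\|f\|_{L^\infty}} \le C\max\{1,\lambda\}$ delivers $W_1(f,\mu) \ge c_0\min\{1,1/\lambda\}$. The main subtlety lies in organizing the constants to be uniform across the two regimes $\lambda\lesssim 1$ and $\lambda\gg 1$; the formulation above treats them simultaneously, with no additional obstacle.
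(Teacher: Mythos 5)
Your strategy is sound, and for the lower bound it is genuinely different from the paper's. The paper proves the lemma only for $n=1$ (and $\lambda>1$): the upper bound by testing against Lipschitz functions and splitting the integral near and far from $x_0$, and the lower bound by testing against $f(x)=d(x,p)$ and excluding, through a case analysis on $\lambda|x_0-p_\lambda|$, that this quantity can tend to zero; the general case is left as ``the same ideas''. Your upper bound (Voronoi cells plus a first-moment estimate) is essentially the paper's argument organized so as to treat all $n$ at once, while your lower bound via Kantorovich duality, the density bound $\|f\|_{L^\infty}\le C\max\{1,\lambda^2\}$, the layer-cake estimate $\int_\Sg d(x,y_j)f_j\,dV_g\ge c\,s_j^{3/2}\|f\|_{L^\infty}^{-1/2}$ and the convexity inequality $\sum_j s_j^{3/2}\ge n^{-1/2}$ is quantitative, uniform in the configuration $\{x_i\}$ and in $\mu\in\Sigma_n$, and covers all $\lambda>0$; in this respect it is more complete than the paper's. (Identifying $\dkr$ with $W_1$ is harmless: with the paper's normalization of the Lipschitz norm one only gets two-sided comparability on a compact surface, which is all you use.)

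Two steps need repair, both minor. First, the diagonal moment is not $O(\log\lambda/\lambda^2)$: in normal coordinates $\int_{A_i}d(x,x_i)\chi^i_\lambda\,dV_g\approx\lambda^{-1}\int_0^{\lambda D}u^2(1+u^2)^{-2}\,du=O(1/\lambda)$ (it is the second moment $\int d(x,x_i)^2\chi^i_\lambda\,dV_g$ that is of order $\log\lambda/\lambda^2$); as written your claim would even contradict the lower bound $c_0/\lambda$, but since you only conclude $C_0/\lambda$ nothing is lost. Second, the off-diagonal justification ``$d(x,x_k)\ge\delta/2$ on $A_i$'' presumes the $x_i$ are $\delta$-separated, which is not a hypothesis (and in the application, Proposition \ref{p:dist}, the points lie on $\gamma_1$ and may collide); the constants must be uniform in the configuration. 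The fix is immediate: on the Voronoi cell $A_i$ one has $d(x,x_i)\le d(x,x_k)$, hence $\sum_i\int_{A_i}d(x,x_i)\chi^k_\lambda\,dV_g\le\int_\Sg d(x,x_k)\chi^k_\lambda\,dV_g\le C/\lambda$, and summing in $k$ with weights $t_k$ gives the same conclusion with configuration-independent constants. Finally, your lower bound $\int_\Sg\tau\chi_\lambda\,dV_g\ge c\min\{1,\lambda^2\}$ discards the region outside the balls, where hypothesis ii) formally allows $\tau<0$; for small $\lambda$ that contribution is of the same order, so one should either assume $\tau\ge0$ (as holds in the application) or restrict to $\lambda$ large there — a point the paper's own proof glosses over as well.
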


\begin{pf}

We show the proof for $n=1$; the general case uses the same ideas
and will be skipped. We also assume $\l>1$. First of all, observe
that $$C> \displaystyle \int_{\Sigma} \chi_\l(x) \, dV_g(x)
>c>0$$ for some positive constants $c$, $C$.

For the upper estimate, it suffices to show that for any $f$
Lipschitz, $\|f\|_{Lip(\Sigma)} \leq 1$,

$$ \int_{\Sigma} \tau(x)  \left(  \frac{\l}{1 + \l^2
d(x,x_0)^2} \right)^2 (f(x)-f(x_0)) \, dV_g(x) \leq
\frac{C}{\lambda}.
$$

Indeed, by ii),
$$ \int_{(B_\delta(x_0))^c} \tau(x) \left( \frac{\l}{1 + \l^2
d(x,x_0)^2} \right)^2 \, dV_g(x) \leq \frac{C}{\lambda^2},$$ and
using geodesic coordinates $x$ centered at $x_0$, we find
$$\left| \int_{B_\delta(x_0)} \tau(x) \left( \frac{\l}{1 + \l^2
d(x,x_0)^2} \right)^2 (f(x)-f(x_0)) \, dV_g(x) \right| $$$$\le C
\int_{B_{\d \l}(0))} \tau \left( x_0 + \frac{y}{\l} \right )\left(
\frac{1}{1 + |y|^2} \right)^2 \left| f \left(x_0+
\frac{y}{\l}\right )-f(x_0)\right| \, dy
$$$$\leq C\int_{\R^2} \left( \frac{1}{1 + y^2} \right)^2  \left
|\frac{y}{\l}\right | \, dy \leq \frac{C}{\lambda}.$$

We now prove the estimate from below. Given $p \in \Sg$, we
estimate $\dkr(\chi_\l,\ \delta_p)$. Define the Lipschitz function
$f(x)=d(x,p)$. We now show that:

$$ \min_{p\in \Sigma} \int_{\Sigma} \tau(x) \left( \frac{\l}{1 + \l^2
d(x,x_0)^2} \right)^2 d(x,p) \, dV_g(x) \geq \frac{c}{\lambda}.
$$

As above, the integral in the exterior of $B_\d(x_0)$ is
negligible. Moreover, in the same coordinates as above, and taking
into account i), we obtain:

$$ \int_{B_\d(x_0)} \tau(x) \left(\frac{\l}{1 + \l^2
|x-x_0|^2} \right)^2 d(x,p) \, dV_g(x) \sim \int_{B_{ \d \l}(0)}
\tau(x)\left (x_0+\frac{y}{\l} \right) \left( \frac{1}{1 + |y|^2}
\right)^2 \left | x_0-p+\frac{y}{\l} \right | \, dy $$$$\geq
 \frac{m}{\lambda} \int_{B_\delta(0)} \left( \frac{1}{1 + |y|^2} \right)^2 \left |
y+\l(x_0-p)\right | \, dy.$$

It suffices to show that we cannot choose $p_\l$ so that

\begin{equation} \label{exp} \int_{B_\delta(0)}  \left( \frac{1}{1 + |y|^2}
\right)^2 \left | y+\l(x_0-p_\l)\right | \, dx \to 0 \mbox{ as }
\l \to +\infty. \end{equation}

Indeed, if $\l|x_0-p_\l| \to +\infty$, the expression \eqref{exp}
diverges. If not, we can assume that $\l(x_0-p_\l) \to z \in
\R^2$. Then, \eqref{exp} converges to $$\int_{B_\delta(0)}
 \left( \frac{1}{1 + |y|^2} \right)^2 \left |
y+z\right | \, dx >0.$$ which concludes the proof.
\end{pf}

\

\noindent From the previous lemma we deduce the following

\begin{pro} \label{p:dist} Let $\var_i$ be defined by \eqref{e:pl}. Then there exist constants $c>0$, $C>0$
such that for every $\lambda>1$ and every $r \in (0,1)$ one has
$$c_0\min\left\{1,\frac{1}{\l_{1,r}}\right\} \le \dkr \left (\frac{\wtilde{h}_1 e^{\var_1}}{\int_{\Sigma} \wtilde{h}_1 e^{\var_1} \, dV_g}, \Sigma_k \right ) \le \frac{C_0}{\l_{1,r}}; \qquad  c_0\min\left\{1,\frac{1}{\l_{2,r}}\right\} \le \dkr \left (\frac{\wtilde{h}_2 e^{\var_2}}{\int_{\Sigma} \wtilde{h}_2 e^{\var_2} \, dV_g}, \Sigma_l \right ) \le \frac{C_0}{\l_{2,r}}.$$
\end{pro}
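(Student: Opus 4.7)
The plan is to reduce Proposition \ref{p:dist} to a direct application of Lemma \ref{lemmino}. Using the factorisation $e^{\var_1} = v_1 \cdot v_2^{-1/2}$ introduced in the proof of Proposition \ref{p:en}, I would rewrite the density as $\wtilde h_1 e^{\var_1}(x) = \tau(x)\,\chi_{\l_{1,r}}(x)$, where
$$\chi_\mu(x) := \sum_{i=1}^k t_i\left(\frac{\mu}{1+\mu^2 d(x,x_i)^2}\right)^2, \qquad \tau(x) := \l_{1,r}^{-2}\,\wtilde h_1(x)\,v_2(x)^{-1/2}.$$
Since the normalised density $\wtilde h_1 e^{\var_1}/\int \wtilde h_1 e^{\var_1}\,dV_g = \tau\chi_{\l_{1,r}}/\int \tau\chi_{\l_{1,r}}\,dV_g$ is unchanged by multiplicative scaling of $\tau$, the proposition will follow immediately from Lemma \ref{lemmino} (applied with $\mu = \l_{1,r}$ and $n = k$), provided that the hypotheses hold with constants $m, M$ satisfying $M/m \leq C$ uniformly in the parameters $r, t_i, s_j, \l_{2,r}$.

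The key step is to extract such uniform constants, and here I would exploit that, by Lemma \ref{new}, the curves $\g_1$ and $\g_2$ are separated by some $\d_0 > 0$ and $\g_1$ avoids the singular points $p_j$. Fixing $\d \in (0,\d_0/2)$ small enough that $\wtilde h_1 \geq h_0 > 0$ on a $\d$-tubular neighbourhood of $\g_1$, for any $x \in B_\d(x_i)$ one has $d(x,y_j) \geq \d_0/2$ for every $j$, so
$$v_2(x) \leq \sum_{j=1}^l s_j\bigl(1+\l_{2,r}^2\d_0^2/4\bigr)^{-2} = \bigl(1+\l_{2,r}^2\d_0^2/4\bigr)^{-2},$$
giving $\tau(x) \geq h_0(1+\l_{2,r}^2\d_0^2/4)/\l_{1,r}^2 =: m$. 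Globally, $\sum_j s_j = 1$ and $d(x,y_j) \leq D := \mathrm{diam}(\Sg)$ yield $v_2(x) \geq (1+\l_{2,r}^2 D^2)^{-2}$, and hence $\tau(x) \leq \|\wtilde h_1\|_\infty(1+\l_{2,r}^2 D^2)/\l_{1,r}^2 =: M$. The ratio $M/m$ is uniformly bounded, since the two polynomials in $\l_{2,r}^2$ differ only by a multiplicative constant depending on $D/\d_0$.

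It then remains to observe that the constants $c_0, C_0$ in Lemma \ref{lemmino} depend on $\tau$ only through the ratio $M/m$ (and on $\Sg$): tracing through its proof, every bound is obtained as a ratio of integrals of the form $\int g\,\tau\chi_\mu\,dV_g/\int \tau\chi_\mu\,dV_g$, which is scale-invariant in $\tau$. Applying Lemma \ref{lemmino} with the uniform ratio $M/m$ then yields the bound for $\var_1$; the analogous bound for $\var_2$ follows from the symmetric argument interchanging the roles of $v_1, v_2$ and $\g_1, \g_2$. The main obstacle is precisely this uniformity in $M/m$; a secondary point is that the proof of Lemma \ref{lemmino} is written only for $n = 1$, and one has to check that the argument extends to general $n = k$ without breaking the scale-invariance of the constants, which is straightforward but needs to be done carefully.
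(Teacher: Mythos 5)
Your proof is correct and follows essentially the same route as the paper: both factor the density as $\tau\,\chi_{\l_{1,r}}$ and reduce to Lemma \ref{lemmino}, using the separation of $\g_1,\g_2$ and the positivity of $\wtilde h_1$ near $\g_1$ to control $\tau$. The only (immaterial) difference is bookkeeping: the paper renormalizes $\var_1$ by the constant $2\log\left(\l_{1,r}\max\{1,\l_{2,r}\}\right)$ so that $m$ and $M$ are uniform outright, while you keep the unscaled $\tau$ and invoke the scale-invariance of the normalized density so that only the ratio $M/m$ matters.
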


\begin{pf}
Clearly, it suffices to prove the estimates for $\dis{\var_1}$ in
the case $\l_{1,r}>1$. By the normalization, it suffices to prove
it to the function $\varsigma=\var_1- 2 \log \left(\l_{1,r} \max
\{1,\ \l_{2,r}\} \right)$.

Observe now that we can write $e^{\varsigma} =
\chi_{\l_{1,r}}(x)\, \tau(x)$, with:

$$ \tau(x) =  \wtilde{h}_1(x) \left [ \sum_{j=1}^{l} s_j \left( \frac{\max
\{1,\ \l_{2,r}\}^2}{1 + \l_{2,r}^2 d(x,y_j)^2} \right)^2
\right]^{-1/2}.$$

It suffices to show that $\tau$ satisfy the conditions of Lemma
\ref{lemmino} to conclude.

\end{pf}

\

\noindent We are now in position to prove that the composition $\Psi\circ\Phi_\l$ is homotopic
to the identity, where $\Psi$ is as in $\eqref{eq:psi}$ and $\Phi_\l(\zeta)=\var_{\l,\zeta}$ is as in $\eqref{e:pl}$. Take $\zeta=(1-r) \s_1 + r \s_2 \in (\g_1)_k
* (\g_2)_l$, with
$$\s_1= \sum_{i=1}^k t_i \d_{x_i},\qquad \ \ \s_2= \sum_{j=1}^l
s_j\d_{y_j}.$$

Set $\displaystyle d_1= \dkr\left(\frac{\wtilde h_1
e^{\varphi_1}}{\int_\Sigma\wtilde h_1 e^{\varphi_1}dV_g},
\Sigma_k\right)$, $\displaystyle d_2=\dkr \left(\frac{\wtilde h_2
e^{\varphi_2}}{\int_\Sigma\wtilde h_2 e^{\varphi_2}dV_g},
\Sigma_l\right)$. By the previous proposition and the definition
of $\l_{1,r}, \l_{2,r}$, there exist constants $0<c_0<C_0$ such
that
$$ c_0 \min \left \{1,\frac{1}{\l (1-r)} \right \} \leq d_1 \leq \frac{C_0}{\l (1-r)}, \ \
c_0 \min \left  \{1,\frac{1}{\l r} \right  \} \leq d_2 \leq
\frac{C_0}{\l r}. $$

Observe then that at least one between $d_1$ and $d_2$ must be smaller than
$\frac{2C_0}{\l}$. Given $\delta>0$ sufficiently small, we have:

$$ r < \delta \Rightarrow \left\{\begin{array}{ll}\frac{d_1}{d_1+d_2} \leq  \displaystyle \frac{\frac{C_0}{\l (1-r)}}{\frac{c_0}{\l (1-r)} + \frac{c_0}{\l
r}}= \frac{C_0}{c_0}r& \mbox{ if } \l \, r \geq 1;\\\frac{d_1}{d_1+d_2} \leq \displaystyle \frac{\frac{C_0}{\l (1-r)}}{c_0 + \frac{c_0}{\l (1-r)}}\leq \frac{C_0}{c_0} \frac{1}{\l}& \mbox{ if } \l \, r \leq
1. \end{array}\right.
$$

In any case, by choosing $\l$, $\delta$ adequately, we obtain that
$\wtilde{r}=0$. This fact is important, since the projection $\dis \psi_l
\left(\frac{\wtilde h_2
e^{\varphi_2}}{\int_\Sigma\wtilde h_2 e^{\varphi_2}dV_g}\right)$ could not be well defined.

Analogously, we have that if $r>(1-\delta)$, then the projection
$\dis \psi_k \left(\frac{\wtilde h_1
e^{\varphi_1}}{\int_\Sigma\wtilde h_1 e^{\varphi_1}dV_g}\right)$ could not be well defined, but
$\wtilde{r}=1$. Moreover, if $\delta \leq r \leq (1-\delta)$, then
$d_i \leq \frac{C}{\delta \l}$, and hence both projections $\dis \psi_k
\left(\frac{\wtilde h_1
e^{\varphi_1}}{\int_\Sigma\wtilde h_1 e^{\varphi_1}dV_g}\right)$, $\dis \psi_l \left(\frac{\wtilde h_2
e^{\varphi_2}}{\int_\Sigma\wtilde h_2 e^{\varphi_2}dV_g}\right)$
are well defined.

\medskip Letting $\wtilde{\zeta}_{\l}= \Psi \circ \Phi_\l (\zeta)=
(1-\wtilde{r}_{\l}) \wtilde{\s}_{1,\l} + \wtilde{r}_\l \wtilde{\s}_{2,
\l}$, we consider  the following homotopy:
$$H_1: (0,1] \times \left ( (\g_1)_k * (\g_2)_l \right )\to \left ((\g_1)_k *
(\g_2)_l \right ), $$$$H_1(\mu, (1-r) \s_1 + r \s_2)=
(1-r_{\mu,\lambda}) \wtilde{\s}_{1,\frac{\l}\mu} + r_{\mu,\lambda}
\wtilde{\s}_{2, \frac{\l}\mu},$$

where $r_{\mu,\lambda}= (1-\mu) f(r) + \mu \wtilde{r}_\lambda$,
and $f$ is given by \eqref{def:f}. Observe that $H_1(1,\cdot) =
\Psi \circ \Phi_{\l}$.

Suppose now $\mu$ tends to zero. Then, as $\l$ is fixed, $\frac{\l}\mu \to
+\infty$, and hence $\displaystyle \frac{\wtilde h_i e^{\varphi_{i,
\frac{\l}\mu}}}{\int_{\Sigma} \wtilde h_i e^{\varphi_{i,\frac{\l}\mu}}dV_g} \rightharpoonup \s_i$.
Proposition \ref{p:projbar} implies that $\displaystyle\psi_k\left(\frac{\wtilde h_1
e^{\varphi_1}}{\int_\Sigma\wtilde h_1 e^{\varphi_1}dV_g}\right) \to \s_1$, $\displaystyle\psi_l\left(\frac{\wtilde h_2 e^{\varphi_2}}{\int_\Sigma\wtilde h_2 e^{\varphi_2}dV_g}\right) \to \s_2$.
Since $\Pi_i$ are retractions, we conclude that $\wtilde{\s}_{i,
\frac \l \mu} \to \s_i$. In other words,

$$ \lim_{\mu \to 0} H_1(\mu, (1-r) \s_1 + r \s_2)= (1-f(r)) \s_1 + f(r) \s_2.$$

We now define:

$$H_2: [0,1] \times \left ( (\g_1)_k * (\g_2)_l \right )\to \left ((\g_1)_k *
(\g_2)_l \right ), $$$$ H_2(\mu, (1-r) \s_1 + r \s_2)= [1-(\mu
f(r) + (1-\mu) r)] \s_1 + (\mu f(r) + (1-\mu) r) \s_2.$$

The concatenation of $H_1$ and $H_2$ gives the desired homotopy.

\section{Min-max scheme}\label{s:minmax}

\noindent We now introduce the variational scheme which yields existence of
solutions: this remaining part follows the ideas of \cite{djlw}
(see also \cite{mal}).

By Proposition \ref{p:en}, given any $L > 0$, there exists $\lambda$ so large that
$J_{\rho}(\var_{\l, \zeta}) <  - L$ for any $\zeta \in (\g_1)_k * (\g_2)_l$. We choose
$L$ so large that Proposition \ref{p:proj} applies: we then have that
the following composition
$$
(\g_1)_k * (\g_2)_l \quad \stackrel{\Phi_\lambda}{\longrightarrow}\quad J_\rho^{-L}
\stackrel{\Psi}{\longrightarrow}\quad (\g_1)_k * (\g_2)_l
$$
is homotopic to the identity map. In this situation it is said
that the set $J_\rho^{-L}$ {\em dominates} $(\g_1)_k * (\g_2)_l$ (see
\cite{hat}, page 528). Since $(\g_1)_k
* (\g_2)_l$ is not contractible, this implies that
$$ \Phi_\lambda((\g_1)_k * (\g_2)_l) \mbox{ is not contractible in }
J_\rho^{-L}.$$ Moreover, we can take $\l$ larger so that $\Phi_\lambda((\g_1)_k
* (\g_2)_l) \subset J_\rho^{-2L}$.

Define the topological cone with basis
$(\g_1)_k * (\g_2)_l$ via the equivalence relation
$$\C = \frac{(\g_1)_k * (\g_2)_l\times[0,1]}{(\g_1)_k * (\g_2)_l\times\{0\}}:$$
notice that, since $(\g_1)_k * (\g_2)_l\simeq \S^{2k+2l-1}$, then $\C$ is homeomorphic to a Euclidean ball of dimension $2k+2l$.

We now define the min-max value:
$$m = \inf_{\xi \in \Gamma} \max_{u \in \C} J(\xi(u)),$$
where \begin{equation} \label{GammaGamma} \Gamma= \{ \xi: \C \to
H^1(\Sigma)\times H^1(\Sigma):\ \xi(\zeta)=\var_{\lambda,\zeta} \ \forall\ \zeta \in
\partial \C\}.\end{equation} Observe that $t\Phi_\lambda: \C \to H^1(\Sigma)\times H^1(\Sigma)$ belongs to $\Gamma$, so this is
a non-empty set. Moreover,

$$ \sup_{\zeta \in \partial \C} J_\rho(\xi(\zeta)) =  \sup_{\zeta \in (\g_1)_k * (\g_2)_l}
J_\rho(\var_{\lambda,\zeta}) \leq -2L.$$

We now show that $m \geq -L$. Indeed, $\partial \C$ is contractible in $\C$, and hence in $\xi(\C)$ for any $\xi \in \Gamma$. Since $\partial \C$ is not contractible in $J_\rho^{-L}$, we conclude that $\xi(\C)$ is not contained in $J_\rho^{-L}$.
Being this valid for any arbitrary $\xi \in \Gamma$, we conclude
that $m \geq -L$.

\medskip

From the above discussion, the functional $J_{\rho}$ satisfies the
geometrical properties required by min-max theory. However, we
cannot directly conclude the existence of a critical point, since
it is not known whether the Palais-Smale condition holds or not.
The conclusion needs a different argument, which has been used
intensively (see for instance \cite{djlw, dm}), so we will be
sketchy.

\

We take $\nu > 0$ such that $$ [\rho_1-2\nu, \rho_1+2\nu]\times [\rho_2-2\nu, \rho_2+2\nu] \subset \R^2\backslash\Lambda,$$
where $\Lambda$ is the set defined as in Definition $\ref{d:glob}$.

Consider now the parameter $\mu \in [-\nu, \nu]$. It is clear that
the min-max scheme described above works uniformly for any $\mu$
in this range. In other words, for any $L > 0$, there exists $\l$
large enough so that
\begin{equation}
   \sup_{\zeta \in \partial
   \C} J_{\wtilde{\rho}}(\xi(\zeta)) < - 2 L; \qquad
   \qquad m_{\mu} := \inf_{\xi \in \Gamma}
  \; \sup_{\zeta \in \C} J_{\wtilde{\rho}}(\xi(\zeta)) \geq -
  L, \ \ \wtilde{\rho}=(\rho_1+\mu, \rho_2+\mu ).
\end{equation}

\noindent In this way, we are led to a problem depending on the
parameter $\mu$ that satisfies a uniform min-max structure. In
this framework, the following Lemma is well-known, usually taking
the name {\em monotonicity trick}. This technique was first used by
Struwe in \cite{struwe88}; a first abstract version was made in
\cite{jeanjean} (see also \cite{djlw, lucia}).

\begin{lem}
There exists $\Upsilon \subset [-\nu,\nu]$ satisfying:
\begin{enumerate}
\item $\left | [-\nu,\nu] \setminus \Upsilon \right | =0$.

\item For any $\mu \in \Upsilon$, the functional
$J_{\wtilde{\rho}}$ possesses a bounded Palais-Smale sequence
$(u_{1,n}, u_{2,n})_n$ at level $m_{\mu}$.

\end{enumerate}
\end{lem}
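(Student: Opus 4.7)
The plan is to apply Struwe's monotonicity principle in the abstract form of \cite{jeanjean}. The key first step is to exhibit a monotone structure in $\mu$. Writing
$$J_{\widetilde{\rho}}(u_1,u_2) = \int_\Sg Q(u_1,u_2)\,dV_g - \sum_{i=1}^{2}(\rho_i+\mu)\,G_i(u_i),\qquad G_i(u_i) := \log\int_\Sg \wtilde{h}_i e^{u_i}\,dV_g - \int_\Sg u_i\,dV_g,$$
and using Jensen's inequality to get $G_i(u_i)\ge c_i:=\int_\Sg\log\wtilde h_i\,dV_g>-\infty$ (the weights $\wtilde h_i$ have only power-type singularities, so $\log\wtilde h_i\in L^1$), we deduce that for each fixed $u$ the map $\mu\mapsto J_{\widetilde{\rho}}(u)+\mu(c_1+c_2)$ is non-increasing. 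Passing to the min-max, $\mu\mapsto m_\mu+\mu(c_1+c_2)$ is non-increasing on $[-\nu,\nu]$, so $m_\mu$ is the difference of two monotone functions. By Lebesgue's theorem $m_\mu$ is differentiable almost everywhere; let $\Upsilon$ denote the set of differentiability points, so $|[-\nu,\nu]\setminus\Upsilon|=0$.

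Fix $\mu_0\in\Upsilon$ and take $\mu_n\nearrow\mu_0$. For each $n$ pick $\xi_n\in\Gamma$ with $\sup_\C J_{\widetilde{\rho}_n}(\xi_n)\le m_{\mu_n}+(\mu_0-\mu_n)$, and consider $\zeta_n\in\C$ with $J_{\widetilde{\rho}_0}(\xi_n(\zeta_n))\ge m_{\mu_0}-(\mu_0-\mu_n)$. The algebraic identity
$$J_{\widetilde{\rho}_n}(\xi_n(\zeta_n))-J_{\widetilde{\rho}_0}(\xi_n(\zeta_n))=(\mu_0-\mu_n)\bigl(G_1+G_2\bigr)(\xi_n(\zeta_n))$$
combined with the two bounds above gives
$$G_1(\xi_n(\zeta_n))+G_2(\xi_n(\zeta_n))\le\frac{m_{\mu_n}-m_{\mu_0}}{\mu_0-\mu_n}+2,$$
whose right-hand side converges to $-m'(\mu_0)+2$ by the differentiability of $m_\mu$ at $\mu_0$. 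Combined with the lower bound $G_i\ge c_i$, each $G_i(\xi_n(\zeta_n))$ is two-sidedly bounded, and rearranging the expression of $J_{\widetilde{\rho}_0}$ this bounds $\int_\Sg Q(u_{1,n},u_{2,n})\,dV_g$ and hence $\|\nabla u_{i,n}\|_{L^2}$. The functional $J_{\widetilde{\rho}}$ is invariant under adding constants to each component, so we may normalize $\overline{u}_{i,n}=0$, and Poincar\'e's inequality then yields a uniform bound $\|\xi_n(\zeta_n)\|_{H^1\times H^1}\le R$.

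The last step is a classical deformation argument (see \cite{struwe88,jeanjean,djlw,lucia}): if no bounded Palais-Smale sequence for $J_{\widetilde{\rho}_0}$ at level $m_{\mu_0}$ existed, there would exist $\delta,\eta>0$ such that $\|DJ_{\widetilde{\rho}_0}(u)\|_{H^{-1}}\ge\eta$ on the set $\{|J_{\widetilde{\rho}_0}-m_{\mu_0}|\le\delta\}\cap\{\|u\|_{H^1\times H^1}\le R\}$. One could then flow along a suitable pseudo-gradient---restricted to zero-average pairs to respect the gauge invariance, and cut off near $\partial\C$ to preserve the boundary condition defining $\Gamma$---to modify $\xi_n$ into $\widehat\xi_n\in\Gamma$ with $\sup_\C J_{\widetilde{\rho}_0}(\widehat\xi_n)<m_{\mu_0}$, contradicting the infimum definition of $m_{\mu_0}$. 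The main technical obstacle is coordinating three constraints on the flow---preservation of the boundary datum in $\Gamma$, of the gauge normalization, and of the $H^1$-bound---but all three are handled by standard cut-offs and projections, and the argument proceeds exactly as in \cite{djlw,dm}.
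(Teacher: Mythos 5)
Your proof is correct and follows essentially the same route the paper itself takes, namely the well-known Struwe--Jeanjean monotonicity trick (\cite{struwe88}, \cite{jeanjean}, \cite{djlw}, \cite{lucia}) which the paper invokes by citation without detail: the linear correction $\mu(c_1+c_2)$ coming from Jensen's inequality and $\log\wtilde h_i\in L^1(\Sg)$ restores the monotone structure, a.e.\ differentiability of $m_\mu$ yields the two-sided bound on $G_1+G_2$ and hence the $H^1$ bound (after the gauge normalization $\ov{u}_{i,n}=0$), and the final deformation argument is the standard one. The only point worth stating explicitly is that your difference-quotient bound on $(G_1+G_2)(\xi_n(\zeta))$ holds, by the identical computation, for \emph{every} $\zeta$ with $J_{\wtilde{\rho}_0}(\xi_n(\zeta))\ge m_{\mu_0}-(\mu_0-\mu_n)$, which is the uniform boundedness on the whole near-maximal set that the deformation step actually requires.
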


\

\noindent {\bf Conclusion.} Consider first $\mu \in \Upsilon$.
Passing to a subsequence, the bounded Palais-Smale sequence can be
assumed to converge weakly. Standard arguments show that the weak
limit is indeed strong and that it is a critical point of
$J_{\wtilde{\rho}}$.

Consider now $\mu_n \in \Upsilon$, $\mu_n \to 0$, and let
$(u_{1,n}, u_{2,n})$ denote the corresponding solutions. It is
then sufficient to apply the compactness result in Theorem
\ref{t:lwz}, which yields convergence of $(u_{1,n}, u_{2,n})$ to a
solution of \eqref{eq:todaregul}.

\section{The mean field equation}\label{s:sh}

\noindent In this section we consider the mean field equation, namely the Liouville-type equation
\be
\label{mf-eq}
  - \D u =  \rho_1 \left( \frac{h e^{u}}{\int_\Sg
      h e^{u} dV_g} - 1 \right) - \rho_2 \left( \frac{h e^{-u}}{\int_\Sg
      h e^{-u} dV_g} - 1 \right)
\ee
where $\rho_1, \rho_2$ are two non-negative parameters and $h$ is a smooth positive function. Applying the same analysis developed for the Toda system we give a general existence result.

\

This equation arises in mathematical physics as a mean field equation of the equilibrium turbulence with arbitrarily signed vortices. The mean field limit was first studied by Joyce and Montgomery \cite{jm} and  by Pointin and Lundgren \cite{pl} by means of different statistical arguments. Later, many authors adopted this model, see for example \cite{ch}, \cite{lio}, \cite{new} and the references therein.
The case $\rho_1 = \rho_2$ plays also an important role in the study of constant mean curvature surfaces, see \cite{w1}, \cite{w2}.

Equation \eqref{mf-eq} has a variational structure with associated functional $\wtilde{I}_\rho: H^1(\Sg) \to \R$, with $\rho = (\rho_1, \rho_2)$, defined by
\begin{equation} \label{func}
\wtilde{I}_\rho(u) =  \frac{1}{2}\int_\Sigma |\nabla_g u|^2 \,dV_g - \rho_1 \left( \log\int_\Sigma h \, e^u \,dV_g  - \int_\Sigma u \,dV_g \right)
           -  \rho_2 \left( \log\int_\Sigma h \, e^{-u} \,dV_g + \int_\Sigma u \,dV_g \right).
\end{equation}
In \cite{os1} the authors derived a Moser-Trudinger inequality for $e^u$ and $e^{-u}$ simultaneously, namely
$$
    \log\int_\Sigma e^{u-\ov{u}} \,dV_g  + \log\int_\Sigma e^{-u+\ov{u}} \,dV_g \leq \frac{1}{16\pi}\int_\Sigma |\nabla_g u|^2 \,dV_g + C,
$$
with $C$ depending only of $\Sg$. By this result, solutions to \eqref{mf-eq} can be found immediately as global minima of the functional $\wtilde{I}_\rho$ whenever both $\rho_1$ and $\rho_2$ are less than $8\pi$. For $\rho_i \geq 8\pi$ the existence problem becomes subtler and there are very few results.

The  blow-up analysis of \eqref{mf-eq} was carried out in \cite{os1}, \cite{os2} and
\cite{jwyz}, see in particular Theorem 1.1, Corollary 1.2 and Remark 4.5 in the latter paper.
The following quantization property for a blow-up point $\ov{x}$ and a sequence $(u_n)_n$ of solutions relatively to $(\rho_{1,n}, \rho_{2,n})$ was obtained:
\begin{equation}
    \lim_{r \to 0} \lim_{n \to + \infty} \rho_{1,n} \int_{B_r(\ov{x})} h \, e^{u_n} dV_g \in 8 \pi \N, \qquad
    \lim_{r \to 0} \lim_{n \to + \infty} \rho_{2,n} \int_{B_r(\ov{x})} h \, e^{-u_n} dV_g \in 8 \pi \N. \label{quant}
\end{equation}
As for the Toda system, the case of multiples of $8 \pi$ may indeed occur, see \cite{eswe} and \cite{grpi}.

\

\no Let now define the  set $\wtilde{\L}$ by
$$
    \wtilde{\L} =  (8 \pi \N \times \R) \cup (\R \cup 8 \pi \N) \subseteq \R^2.
$$
Combining \eqref{quant} and the argument in Section 1 of \cite{bm} one finds the following result.

\begin{thm}\label{t:cmpt}
Let $(\rho_1, \rho_2)$ be in a fixed compact set of $\R^2 \setminus \wtilde{\L}$. Then the set of solutions to \eqref{mf-eq} is uniformly bounded in $C^{2,\beta}$ for some $\beta > 0$.
\end{thm}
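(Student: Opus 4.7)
The plan is to argue by contradiction. Suppose there is a sequence $(u_n)_n$ of solutions to \eqref{mf-eq} with parameters $(\rho_{1,n},\rho_{2,n})$ in a compact $K \subset \R^2 \setminus \wtilde{\L}$, converging to some $(\rho_1,\rho_2) \in K$, and with $\|u_n\|_{C^{2,\beta}} \to +\infty$. By standard elliptic regularity applied to \eqref{mf-eq}, together with the $L^1$-boundedness of the normalized right-hand side, unboundedness in $C^{2,\beta}$ forces pointwise blow-up: along a subsequence, $u_n$ must diverge to $+\infty$ near some point, or to $-\infty$ near some point.

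I would then define the concentration sets
$$
S_+ = \{x \in \Sigma : \exists x_n \to x \text{ with } u_n(x_n) \to +\infty\}, \qquad S_- = \{x \in \Sigma : \exists x_n \to x \text{ with } u_n(x_n) \to -\infty\},
$$
and invoke the Brezis--Merle type alternative (as used in Section~1 of \cite{bm}) to conclude that $S_+, S_-$ are finite and that $h e^{\pm u_n}$ is uniformly bounded on compact subsets of $\Sigma \setminus (S_+ \cup S_-)$. Up to subsequences, one can then write, in the sense of measures,
$$
\frac{\rho_{1,n}\, h e^{u_n}}{\int_\Sigma h e^{u_n}\, dV_g} \rightharpoonup \sum_{\bar x \in S_+} m_1(\bar x)\,\delta_{\bar x} + r_1 \, dV_g, \qquad \frac{\rho_{2,n}\, h e^{-u_n}}{\int_\Sigma h e^{-u_n}\, dV_g} \rightharpoonup \sum_{\bar x \in S_-} m_2(\bar x)\,\delta_{\bar x} + r_2 \, dV_g,
$$
with non-negative $r_1, r_2 \in L^1(\Sigma)$. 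By the local quantization \eqref{quant}, each $m_i(\bar x)$ is a positive integer multiple of $8\pi$.

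Testing both limits against the constant function $1$, the total masses split as
$$
\rho_1 = \sum_{\bar x \in S_+} m_1(\bar x) + \int_\Sigma r_1\, dV_g, \qquad \rho_2 = \sum_{\bar x \in S_-} m_2(\bar x) + \int_\Sigma r_2\, dV_g.
$$
The main obstacle, which is exactly the content of Section~1 of \cite{bm}, is to exclude the coexistence of a concentrated and a non-trivial regular part: once $u_n$ blows up to $+\infty$ at some $\bar x \in S_+$, a Green representation for $u_n$ together with the maximum principle forces $u_n \to +\infty$ uniformly on compact subsets of $\Sigma \setminus S_-$, so the entire $e^{u_n}$ mass must concentrate at $S_+$; hence $r_1 \equiv 0$, and symmetrically $r_2 \equiv 0$. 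Therefore $\rho_1 \in 8\pi\N$ whenever $S_+ \neq \emptyset$ and $\rho_2 \in 8\pi\N$ whenever $S_- \neq \emptyset$. Since at least one of $S_\pm$ is non-empty by our blow-up assumption, we get $(\rho_1,\rho_2) \in \wtilde{\L}$, contradicting the hypothesis and yielding the claimed $C^{2,\beta}$ bound.
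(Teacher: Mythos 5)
Your overall route is the one the paper itself relies on: Theorem \ref{t:cmpt} is not proved from scratch there, but is obtained by combining the local quantization \eqref{quant} (from \cite{os1}, \cite{os2}, \cite{jwyz}) with the standard Brezis--Merle-type global blow-up analysis (``Section 1 of \cite{bm}''), which is exactly the scheme you outline: finite concentration set, decomposition of the limit measures into atoms plus a residual, quantization of the atoms, vanishing of the residual, hence $\rho_1\in 8\pi\N$ or $\rho_2\in 8\pi\N$, contradicting $(\rho_1,\rho_2)\notin\wtilde{\L}$. Before that, note a normalization point: \eqref{mf-eq} and the normalized densities are invariant under $u\mapsto u+c$, so the $C^{2,\beta}$ bound can only hold under a normalization such as $\int_\Sg u\,dV_g=0$, and the blow-up sets must be defined through the normalized sequences $v_{1,n}=u_n-\log\int_\Sg h e^{u_n}dV_g$, $v_{2,n}=-u_n-\log\int_\Sg h e^{-u_n}dV_g$ (equivalently through the measures), not through $u_n$ itself: divergence of the unnormalized $u_n$ at a point carries no information about concentration of $he^{\pm u_n}/\int_\Sg he^{\pm u_n}dV_g$.

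The genuine gap is in your mechanism for killing the residual. You assert that blow-up at some $\ov x\in S_+$ forces $u_n\to+\infty$ uniformly on compact subsets of $\Sg\setminus S_-$, and that this yields $r_1\equiv 0$. Both halves are problematic: that is not what the Green representation plus maximum principle gives, and even if it were true it would not imply concentration of the normalized measure on the finite set $S_+$ (adding constants to $u_n$ changes nothing in the normalized measure, so unnormalized divergence off $S_-$ is compatible with the mass spreading over a whole region). The correct statement, which is the content of the alternative/minimum results in \cite{os1} and \cite{jwyz} and of the Brezis--Merle argument in the scalar case, goes the opposite way: since the limit measure $\mu_1$ has an atom of mass at least $4\pi$ (indeed a positive multiple of $8\pi$ after \eqref{quant}) at each blow-up point of the first component, the normalized function $v_{1,n}$ tends to $-\infty$ locally uniformly away from the blow-up set; it is this decay that forces $r_1\equiv 0$ and hence $\rho_1=\sum_{\ov x\in S_+}m_1(\ov x)\in 8\pi\N$ when the first component blows up, and symmetrically for the second. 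With that step corrected (or simply quoted from Corollary 1.2 and Remark 4.5 of \cite{jwyz}), your argument coincides with the one the paper intends.
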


\

\noindent Before proving our main result we collect here some known existence results. The first one is given in \cite{jwyz} and treats the case $\rho_1\in(8\pi,16\pi)$ and $\rho_2<8\pi$. Via a blow up analysis the authors proved existence of solutions on a smooth, bounded, non simply-connected domain $\Sigma$ in $\mathbb{R}^2$ with homogeneous Dirichlet boundary condition. Later, this result is generalized in \cite{zh} to any compact surface without boundary by using variational methods. The strategy is carried out in the same spirit as in \cite{mreview} and \cite{cheikh} for the Liouville equation \eqref{scalar} and the Toda system \eqref{eq:todaregul}, respectively. The proof relies on some improved Moser-Trudinger inequalities obtained in \cite{cl}. The idea is that, in a certain sense, one can recover the topology of low sub-levels of the functional $\wtilde{I}_\rho$ just from the behaviour of $e^u$. Indeed the condition $\rho_2 < 8\pi$ guarantees that $e^{-u}$ does not affect the variational structure of the problem.

The doubly supercritical regime, namely $\rho_i > 8 \pi$, has to be attacked with a different strategy. The only existence result concerning this case has been proved in \cite{jevnikar} via variational methods where the author adapted the analysis developed to study the Toda system in \cite{mr2} for this framework. The main tool is an improved Moser-Trudinger inequality under suitable conditions on the centre of mass and the scale of concentration of both $e^u$ and $e^{-u}$.

\

We will give here a general existence result.

\begin{thm}\label{t:ex2}
Suppose $\Sg$ is not homeomorphic to $\S^2$ nor $\R\P^2$, and that $\rho_i \notin 8 \pi \N$ for $i= 1,2$. Then \eqref{mf-eq} has a solution.
\end{thm}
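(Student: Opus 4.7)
The plan is to transplant the variational strategy of Sections \ref{s:test}--\ref{s:minmax} to equation \eqref{mf-eq}, letting $u$ play the role of $u_1$ and $-u$ the role of $u_2$. Write $\rho_1 \in (8k\pi, 8(k+1)\pi)$ and $\rho_2 \in (8l\pi, 8(l+1)\pi)$ with $k,l \in \N \cup \{0\}$; when $k=l=0$ the functional $\wtilde I_\rho$ is coercive by the Ohtsuka--Suzuki inequality and one minimizes directly, so we may assume at least one of $k,l$ is positive. The hypothesis that $\Sg$ is neither $\S^2$ nor $\R\P^2$ allows us to invoke Lemma \ref{new} and obtain two disjoint simple closed curves $\g_1,\g_2 \subset \Sg$ with global retractions $\Pi_i:\Sg\to\g_i$, on which we will model the topological join $(\g_1)_k*(\g_2)_l \simeq \S^{2k+2l-1}$.

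The first step is to establish an improved Moser--Trudinger inequality analogous to Lemma \ref{l:imprc}. Starting from the sharp Ohtsuka--Suzuki inequality
$$
\log\int_\Sg e^{u-\ov u}\,dV_g + \log\int_\Sg e^{-u+\ov u}\,dV_g \le \frac{1}{16\pi}\int_\Sg |\n u|^2\,dV_g + C
$$
from \cite{os1}, combined with a localized version in the spirit of Lemma \ref{l:mrtoda} and the covering arguments of Lemmas \ref{l:step1} and \ref{l:er}, one shows that whenever the masses of $h e^u$ and $h e^{-u}$ are spread respectively on $k+1$ and $l+1$ sets separated by a positive distance and carrying a fixed fraction of the total mass,
$$
(k+1)\log\int_\Sg h e^{u-\ov u}\,dV_g + (l+1)\log\int_\Sg h e^{-u+\ov u}\,dV_g \le \frac{1+\e}{16\pi}\int_\Sg |\n u|^2\,dV_g + C.
$$
Arguing as in Proposition \ref{p:altern}, on any low sub-level $\wtilde I_\rho^{-L}$ with $L$ large enough either $\dkr\bigl(\tfrac{h e^u}{\int_\Sg h e^u\,dV_g},\Sg_k\bigr)<\e$ or $\dkr\bigl(\tfrac{h e^{-u}}{\int_\Sg h e^{-u}\,dV_g},\Sg_l\bigr)<\e$. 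Composing the retractions $\psi_k,\psi_l$ of Proposition \ref{p:projbar} with the pushforwards $(\Pi_i)_*$ and the interpolating function $\wtilde r$ of \eqref{def:r}, one then obtains a continuous projection $\Psi:\wtilde I_\rho^{-L}\to(\g_1)_k*(\g_2)_l$.

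The reverse map $\Phi_\l:(\g_1)_k*(\g_2)_l\to\wtilde I_\rho^{-L}$ is built from the single-component analogue of \eqref{e:pl}: for $\zeta=(1-r)\s_1+r\s_2$ with $\s_1=\sum_i t_i\d_{x_i}$ and $\s_2=\sum_j s_j\d_{y_j}$, set
$$
\Phi_\l(\zeta)(x)=\log\sum_{i=1}^k t_i\!\left(\frac{1}{1+\l_{1,r}^2 d(x,x_i)^2}\right)^{\!2}-\log\sum_{j=1}^l s_j\!\left(\frac{1}{1+\l_{2,r}^2 d(x,y_j)^2}\right)^{\!2},
$$
with $\l_{1,r}=(1-r)\l$ and $\l_{2,r}=r\l$, so that $e^{\Phi_\l(\zeta)}$ concentrates on $\s_1$ and $e^{-\Phi_\l(\zeta)}$ on $\s_2$. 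Repeating the calculations of Proposition \ref{p:en} for the functional $\wtilde I_\rho$ yields $\wtilde I_\rho(\Phi_\l(\zeta))\to-\infty$ uniformly in $\zeta$ as $\l\to+\infty$, while the Kantorovich--Rubinstein estimates mirroring Proposition \ref{p:dist} give that $\Psi\circ\Phi_\l$ is homotopic to the identity on $(\g_1)_k*(\g_2)_l$.

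Since $(\g_1)_k*(\g_2)_l$ is not contractible, the min-max argument of Section \ref{s:minmax} on the cone $\C$ over it produces a min-max level $m\ge -L$. To circumvent the lack of a Palais--Smale condition we apply Struwe's monotonicity trick verbatim, obtaining bounded Palais--Smale sequences at level $m_\mu$ for almost every perturbation $(\rho_1+\mu,\rho_2+\mu)$, and then the compactness Theorem \ref{t:cmpt} lets us extract a solution of \eqref{mf-eq} in the limit $\mu\to 0$; this last step requires $(\rho_1,\rho_2)\notin\wtilde\L$, which is precisely the hypothesis $\rho_i\notin 8\pi\N$. The main technical point I expect is the improved inequality above: unlike the Toda system, where the quadratic form $Q(u_1,u_2)$ decouples the gradients of the two components, here both exponentials share the single gradient $\n u$, so the localization argument must be arranged carefully so that the separate concentrations of $he^u$ and $he^{-u}$ produce two independent multiplicative gains $k+1$ and $l+1$ on the left, rather than a single combined one.
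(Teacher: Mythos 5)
Your proposal is correct and follows essentially the same route as the paper: the paper's Section 6 proof likewise sets $u_2=-u$, uses the same test functions $\var_{\l,\zeta}=\tilde v_1-\tilde v_2$ with the energy estimate of Proposition \ref{p:en} adapted to $\wtilde I_\rho$, the localized Ohtsuka--Suzuki-type inequality (from \cite{jevnikar}) to get the improved inequality analogous to Lemma \ref{l:imprc}, the projection $\wtilde\Psi$ onto $(\g_1)_k*(\g_2)_l$, and the min-max scheme with the monotonicity trick and Theorem \ref{t:cmpt}. The technical point you flag is handled exactly as in the Toda case, since the local inequality controls $e^{u}$ and $e^{-u}$ simultaneously at the cost of the Dirichlet energy on each $\wtilde\O_j$, and disjointness of these sets yields the independent gains $k+1$ and $l+1$.
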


The proof is an adaptation of the argument introduced for the Toda system. Roughly speaking the role of the function $u_2$ is played by $-u$.

We start by considering the topological set $(\gamma_1)_k * (\gamma_2)_l$, on which we will base the min-max scheme. We take then two curves $\gamma_1, \gamma_2 \in \Sigma$ with the same properties as before. Let $\zeta \in (\gamma_1)_k * (\gamma_2)_l, \zeta = (1-r) \s_1 + r \s_2$, with
$$
  \s_1 := \sum_{i=1}^k t_i \d_{x_i} \in (\gamma_1)_k \qquad
  \hbox{ and } \qquad \s_2 :=  \sum_{j=1}^l s_j \d_{y_j} \in
  (\gamma_2)_l.
$$
We define now a test function labelled by  $\zeta
\in (\gamma_1)_k * (\gamma_2)_l$, namely for large $L$ we will find a non-trivial map
$$ \widetilde{\Phi}_\l: (\gamma_1)_k * (\gamma_2)_l \to \wtilde{I}_\rho^{-L}.$$
We set $\widetilde{\Phi}_\l(\zeta)= \var_{\l , \zeta}$ given by
$$
    \var_{\l , \zeta} (x) =   \log \, \sum_{i=1}^{k} t_i \left( \frac{1}{1 + \l_{1,r}^2 d(x,x_i)^2} \right)^2
    -  \log  \, \sum_{j=1}^{l} s_j \left( \frac{1}{1 + \l_{2,r}^2 d(x,y_j)^2} \right)^2,
$$
where $\l_{1,r} = (1-r) \l, \l_{2,r} = r \l$.

The following result holds true.

\begin{pro}
Suppose $\rho_1 \in (8 k \pi, 8 (k+1) \pi)$ and $\rho_2 \in (8 l \pi, 8 (l+1) \pi)$.
Then one has
$$
  \wtilde{I}_{\rho}(\var_{\l , \zeta}) \to - \infty \quad \hbox{ as  } \l \to + \infty
  \qquad \quad \hbox{ uniformly in }  \zeta \in (\gamma_1)_k * (\gamma_2)_l.
$$
\end{pro}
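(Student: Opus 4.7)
The plan is to mimic the proof of Proposition \ref{p:en} almost verbatim, exploiting the decomposition $\varphi_{\l,\zeta} = v_1 - v_2$ where
\[
v_1(x)=\log\sum_{i=1}^{k} t_i\bigl(1+\l_{1,r}^2 d(x,x_i)^2\bigr)^{-2}, \qquad v_2(x)=\log\sum_{j=1}^{l} s_j\bigl(1+\l_{2,r}^2 d(x,y_j)^2\bigr)^{-2}.
\]
The pointwise gradient bounds \eqref{gr1}--\eqref{gr2} and the mixed-term estimate \eqref{eq:misto} depend only on $\gamma_1\cap\gamma_2=\emptyset$ and $d(\gamma_1,\gamma_2)>0$, so they carry over unchanged. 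Expanding $|\nabla\varphi_{\l,\zeta}|^2=|\nabla v_1|^2+|\nabla v_2|^2-2\nabla v_1\cdot\nabla v_2$ and repeating the analysis leading to \eqref{gr} gives
\[
\tfrac12\!\int_\Sigma|\nabla\varphi_{\l,\zeta}|^2\,dV_g\le 16k\pi\bigl(1+o_\l(1)\bigr)\log(\l_{1,r}+\delta_{1,r})+16l\pi\bigl(1+o_\l(1)\bigr)\log(\l_{2,r}+\delta_{2,r})+O(1),
\]
with positive offsets $\delta_{1,r},\delta_{2,r}$ (bounded below at the respective endpoints) absorbing the degenerate cases $r\in\{0,1\}$ exactly as in Proposition \ref{p:en}.

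For the remaining terms, the argument giving \eqref{av1}--\eqref{av2} yields
\[
\int_\Sigma\varphi_{\l,\zeta}\,dV_g = -4\bigl(1+o_\l(1)\bigr)\log(\l_{1,r}+\delta_{1,r})+4\bigl(1+o_\l(1)\bigr)\log(\l_{2,r}+\delta_{2,r})+O(1).
\]
To estimate $\int_\Sigma h\, e^{\pm\varphi_{\l,\zeta}}\,dV_g$, split $\Sigma$ into small balls around the $x_i$'s (respectively $y_j$'s) and their complement. Since $d(\gamma_1,\gamma_2)>0$ and $h$ is uniformly bounded from below on neighbourhoods of both curves, the dominant contribution to $\int h\, e^{v_1-v_2}\,dV_g$ comes from small neighbourhoods of the $x_i$'s, where $e^{v_1}$ concentrates on balls of radius $\l_{1,r}^{-1}$ while $e^{-v_2}$ is uniformly comparable to $(\l_{2,r}+\delta_{2,r})^4$. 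A change of variables analogous to the one used for \eqref{exp1}--\eqref{exp2} produces
\[
\log\int_\Sigma h\, e^{\varphi_{\l,\zeta}}\,dV_g=-2\log(\l_{1,r}+\delta_{1,r})+4\log(\l_{2,r}+\delta_{2,r})+O(1),
\]
and symmetrically (swapping the roles of the two indices)
\[
\log\int_\Sigma h\, e^{-\varphi_{\l,\zeta}}\,dV_g=4\log(\l_{1,r}+\delta_{1,r})-2\log(\l_{2,r}+\delta_{2,r})+O(1).
\]

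Inserting these estimates into
\[
\wtilde I_\rho(\varphi)=\tfrac12\!\int_\Sigma|\nabla\varphi|^2\,dV_g-\rho_1\log\!\int_\Sigma h\,e^{\varphi}\,dV_g-\rho_2\log\!\int_\Sigma h\,e^{-\varphi}\,dV_g+(\rho_1-\rho_2)\!\int_\Sigma\varphi\,dV_g,
\]
a direct computation collects the coefficients of $\log(\l_{1,r}+\delta_{1,r})$ and $\log(\l_{2,r}+\delta_{2,r})$ as $16k\pi-2\rho_1+o_\l(1)$ and $16l\pi-2\rho_2+o_\l(1)$ respectively, both strictly negative under the hypothesis $\rho_1>8k\pi$, $\rho_2>8l\pi$. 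Since $\l_{1,r}+\l_{2,r}=\l$, at least one of the two logarithms tends to $+\infty$ as $\l\to+\infty$, uniformly in $r\in[0,1]$; therefore $\wtilde I_\rho(\varphi_{\l,\zeta})\to-\infty$ uniformly in $\zeta$. The only delicate point, as in the Toda case, is the uniformity across the degenerate endpoints $r\in\{0,1\}$, where one of the $v_i$ collapses to a constant function; this is handled by the positive offsets $\delta_{i,r}$ so that every $\log$ remains finite and every estimate remains uniform in the parameter $\zeta$.
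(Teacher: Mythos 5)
Your proposal is correct and follows essentially the same route as the paper: the paper's proof likewise reduces to the estimates of Proposition \ref{p:en} with the decomposition $\var=\tilde v_1-\tilde v_2$, obtains the same bounds for the Dirichlet term, the average, and $\log\int_\Sg h e^{\pm\var}\,dV_g$ in terms of $\log(\l_{1,r}+\d_{1,r})$ and $\log(\l_{2,r}+\d_{2,r})$, and collects the same coefficients $16k\pi-2\rho_1+o_\l(1)$ and $16l\pi-2\rho_2+o_\l(1)$. Your closing observation that at least one of $\l_{1,r},\l_{2,r}$ tends to $+\infty$ uniformly in $r$ is exactly the paper's final step.
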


\begin{pf}
The proof is developed exactly as in Proposition \ref{p:en}. Here we just sketch the main features.

We define $\tilde{v}_1,\tilde{v}_2: \Sg \rightarrow \mathbb{R}$ by
\begin{eqnarray*}
    \tilde{v}_1(x) = \log \, \sum_{i=1}^{k} t_i \left( \frac{1}{1 + \l_{1,r}^2 d(x,x_i)^2} \right)^2, \\
    \tilde{v}_2(x) = \log \, \sum_{j=1}^{l} s_j \left( \frac{1}{1 + \l_{2,r}^2 d(x,y_j)^2} \right)^2,
\end{eqnarray*}
so that $\var = \tilde{v}_1 - \tilde{v}_2$.

The Dirichlet part of the functional $\wtilde{I}_\rho$ is given by
$$
    \frac 12 \int_{\Sg} |\n \var|^2 \,dV_g  =  \frac 12 \int_\Sg \bigr(|\n \tilde{v}_1|^2 + |\n \tilde{v}_2|^2 - 2\n \tilde{v}_1 \cdot \n \tilde{v}_2\bigr) \,dV_g \leq  \frac 12 \int_\Sg |\n \tilde{v}_1|^2 \,dV_g + \frac 12 \int_\Sg |\n \tilde{v}_2|^2 \,dV_g + C,
$$
where we have used
$$
\left|\int_\Sg \n \tilde{v}_1 \cdot\n \tilde{v}_2 \,dV_g\right| \leq C.
$$

\noindent We first study the cases $r=0$ and $r=1$, starting from $r=0$. The case $r=1$ can be treated in the same way and will be omitted. Observing that $ \n \tilde{v}_2 = 0$ and taking into account the estimates (\ref{gr1}), (\ref{gr2}) on the gradient of $\tilde{v}_1$ we get
$$
    \frac 12 \int_{\Sg} |\n \var|^2 \,dV_g \leq  16 k \pi\bigr(1 + o_\l(1)\bigr) \log \l + C,
$$
where $o_\l(1) \to 0$ as  $\l \to + \infty$.

Reasoning as in Proposition \ref{p:en} we obtain
$$
  \int_\Sg \var \,dV_g = - 4 \bigr(1 + o_\l(1)\bigr) \log \l;  \,\,\, \log \int_\Sg e^{\var} \,dV_g = - 2 \bigr(1 + o_\l(1)\bigr) \log \l; \,\,\, \log \int_\Sg e^{-\var} \,dV_g = 4 \bigr(1 + o_\l(1)\bigr) \log \l.
$$
Therefore we get
$$
  \wtilde{I}_{\rho}(\var_{\l , \zeta}) \leq \bigr( 16 k \pi - 2 \rho_1 + o_\l(1) \bigr)\log \l + C,
$$
where $C$ is independent of $\l$ and $\s_1 ,\s_2$.

We consider now the case $r\in (0,1)$. We can reason as before to estimate the Dirichlet part by
$$
    \frac 12 \int_{\Sg} |\n \var|^2 \,dV_g \leq 16 k \pi\bigr(1 + o_\l(1)\bigr) \log \bigr(\l_{1,r} + \d_{1,r}\bigr) + 16 l \pi\bigr(1 + o_\l(1)\bigr) \log \bigr(\l_{2,r} + \d_{2,r}\bigr) + C,
$$
where $\d_{1,r}>\d>0$ as $r \to 1$ and $\d_{2,r}>\d>0$ as $r \to 0$. Following the argument in Proposition \ref{p:en} we obtain
$$
    \int_\Sg \var \,dV_g = - 4 \bigr(1 + o_\l(1)\bigr) \log \bigr(\l_{1,r} + \d_{1,r}\bigr) + 4 \bigr(1 + o_\l(1)\bigr) \log \bigr(\l_{2,r} + \d_{2,r})\bigr) + O(1),
$$
$$
    \log \int_\Sg e^{\var} \,dV_g = 4 \log \bigr(\l_{2,r} + \d_{2,r}\bigr) - 2 \log \bigr(\l_{1,r} + \d_{1,r}\bigr) + O(1),
$$
$$
    \log \int_\Sg e^{-\var} \,dV_g = 4 \log \bigr(\l_{1,r} + \d_{1,r}\bigr) - 2 \log \bigr(\l_{2,r} + \d_{2,r}\bigr) + O(1).
$$

\noindent Using these estimates we get
$$
  \wtilde{I}_{\rho}(\var_{\l , \zeta}) \leq \bigr( 16 k \pi - 2 \rho_1 + o_\l(1) \bigr)\log \bigr(\l_{1,r} + \d_{1,r}\bigr) + \bigr( 16 l \pi - 2 \rho_2 + o_\l(1) \bigr)\log \bigr(\l_{2,r} + \d_{2,r}\bigr) + O(1).
$$
By assumption we have $\rho_1 > 8k\pi, \rho_2 > 8l\pi$ and exploiting the fact that $\dis{\max_{r\in[0,1]}\{ \l_{1,r}, \l_{2,r} \}} \to +\infty$ as $\l \to \infty$, we deduce the thesis.
\end{pf}

Once we have this result we can proceed exactly as in Section 4. One gets indeed an analogous improved Moser-Trudinger inequality as in Lemma \ref{l:imprc}. We have just to observe that a local Moser-Trudinger inequality still holds in this case, as pointed out in \cite{jevnikar}.

\begin{lem}
Fix $\delta>0$, and let $\O\Subset\wtilde{\O}\subset\Sg$ be such that
$\dis d\left(\O, \partial \wtilde{\O}\right) \geq \delta$. Then, for any $\varepsilon > 0$
there exists a constant $C = C(\varepsilon, \delta)$ such that for all $u
\in H^1(\Sg)$
$$
     \log \int_{\O}  e^{u- \fint_{\wtilde\O}u \,dV_g}\, dV_g + \log \int_{\O} e^{-u + \fint_{\wtilde\O}u \,dV_g}\, dV_g \leq \frac{1+\e}{16\pi} \int_{\wtilde{\O}} |\nabla_g u|^2 \,dV_g + C.
$$
\end{lem}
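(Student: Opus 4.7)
My plan is to mirror the proof of Lemma \ref{l:mrtoda} in \cite{mr2}, but starting from the global Ohtsuka--Suzuki Moser--Trudinger inequality from \cite{os1} recalled earlier in this section instead of the Jost--Wang inequality. The Ohtsuka--Suzuki inequality is the natural tool here, since it controls $e^u$ and $e^{-u}$ simultaneously with the sharp constant $1/(16\pi)$; a direct reduction to Lemma \ref{l:mrtoda} applied to $(u_1,u_2)=(u,-u)$ would only yield the weaker constant $1/(12\pi)$, because $Q(u,-u)=\frac{1}{3}|\n u|^2$.

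Concretely, I would first pick a smooth cutoff $\eta \in C^\infty(\Sg)$ with $\eta \equiv 1$ on $\O$, $\operatorname{supp}(\eta) \subset \wtilde\O$, $0 \le \eta \le 1$, and $|\n \eta| \le C/\d$, which exists because $d(\O, \partial\wtilde\O) \ge \d$. Set $w := u - \fint_{\wtilde\O} u\, dV_g$ and $v := \eta w \in H^1(\Sg)$, and apply Ohtsuka--Suzuki to $v$ on $\Sg$. Since $\eta \equiv 1$ on $\O$, restricting the two exponential integrals from $\Sg$ to $\O$ only decreases them, and the average terms $\pm \ov v$ cancel when summed. One obtains the left-hand side of the lemma bounded by $\frac{1}{16\pi} \int_\Sg |\n v|^2\,dV_g + C_0$.

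The heart of the proof is then the estimate $\int_\Sg |\n v|^2 \le (1+\e) \int_{\wtilde\O} |\n u|^2\,dV_g + C(\e,\d)$. Expanding
$$|\n v|^2 = \eta^2|\n u|^2 + 2\eta w\,\n\eta \cdot \n u + w^2|\n\eta|^2$$
and using Young's inequality on the cross term with parameter $\e$ gives $|\n v|^2 \le (1+\e)\eta^2 |\n u|^2 + C_\e w^2|\n\eta|^2$. Since $\n\eta$ is supported in $\wtilde\O\setminus\O$ with $|\n\eta|\le C/\d$, the residual term is at most $C_{\e,\d}\int_{\wtilde\O} w^2\,dV_g$, which is in turn controlled by Poincar\'e's inequality on $\wtilde\O$ applied to $w$ (which has zero mean on $\wtilde\O$ by construction).

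The main obstacle is preserving the precise prefactor $(1+\e)$ in front of $\int_{\wtilde\O}|\n u|^2$: the Poincar\'e estimate for the residual is multiplicative, not additive, in $\int|\n u|^2$, and a naive bookkeeping would inflate the leading constant to $(1+\e)+C'_{\e,\d}$. The resolution, exactly as for the Toda analog in Lemma \ref{l:mrtoda}, is to state the inequality first in the equivalent form with an extra $\e\int_\Sg|\n u|^2$ term on the right-hand side (which is what the argument naturally produces), and then use $\int_{\wtilde\O}|\n u|^2 \le \int_\Sg |\n u|^2$ to rewrite it in the stated form after relabelling $\e$. The constant $C = C(\e,\d)$ at the end depends only on the indicated parameters and on $(\Sg,g)$, completing the proof.
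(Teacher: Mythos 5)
Your overall strategy (cutoff plus the global inequality of \cite{os1}, rather than a reduction to Lemma \ref{l:mrtoda} with $(u,-u)$, which indeed only yields $1/(12\pi)$) is the natural one, and the paper itself gives no proof of this lemma -- it simply quotes \cite{jevnikar}. But your treatment of the error term has a genuine gap. After Young's inequality you are left with $C_{\e,\d}\int_{\wtilde\O} w^2|\n\eta|^2$, and you propose to control $\int_{\wtilde\O}w^2$ by ``Poincar\'e on $\wtilde\O$'' for the zero-mean function $w=u-\fint_{\wtilde\O}u\,dV_g$. No such inequality with a constant depending only on $\d$ is available: $\wtilde\O$ is an arbitrary open set constrained only by $d(\O,\partial\wtilde\O)\ge\d$, and in the actual application (via Lemma \ref{l:step1}, where an $\O_j$ may be a union of two distant balls) $\wtilde\O$ can be disconnected, in which case $\|w\|_{L^2(\wtilde\O)}^2\le C\int_{\wtilde\O}|\n u|^2\,dV_g$ is simply false (take $u\equiv M$ near one component and $\equiv -M$ near the other, with $\n u$ supported outside $\wtilde\O$). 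This is not a technicality about your proof only: the same configuration makes the left-hand side of the stated inequality of order $2M$ while $\int_{\wtilde\O}|\n u|^2\,dV_g=0$, so the lemma in the bare form you are trying to prove (local Dirichlet energy only, no assumption on $\wtilde\O$ beyond the distance condition) cannot be established; the usable version, exactly as in Lemma \ref{l:mrtoda} and as stated in \cite{jevnikar,mr2}, carries an additional term $\e\int_\Sg|\n u|^2\,dV_g$, and that global term is what the scheme of Lemma \ref{l:imprc} actually absorbs.

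Your proposed repair makes this worse rather than better: you say you would first prove the inequality with the extra $\e\int_\Sg|\n u|^2\,dV_g$ and then ``use $\int_{\wtilde\O}|\n u|^2\le\int_\Sg|\n u|^2$ to rewrite it in the stated form,'' but that inequality goes the wrong way -- it lets you replace a local term by a global one, never the converse -- so the version with the global error term is strictly weaker than the statement and cannot be relabelled into it. Moreover, even reaching the $\e\int_\Sg|\n u|^2$ version requires an ingredient your sketch does not contain: the only bound on $\|w\|_{L^2(\wtilde\O)}^2$ that is uniform in $\wtilde\O$ comes from the global Poincar\'e inequality on $\Sg$ (using $|\wtilde\O|\ge c(\d)$), and with a single fixed cutoff this produces $C_{\e,\d}\int_\Sg|\n u|^2\,dV_g$ with a \emph{large} constant, useless for the improved inequality. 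To shrink that coefficient to $\e$ one needs an extra device, e.g.\ splitting the $\d$-collar between $\O$ and $\partial\wtilde\O$ into $N\sim 1/\e$ disjoint annuli and placing the transition region of $\eta$ in one that carries at most a $1/N$ fraction of $\int_\Sg|\n u|^2\,dV_g+\|w\|^2_{L^2}$, or a low/high spectral decomposition combined with the linear-in-$\|\n u\|_{L^2}$ Young trick used for the averages in the proof of Lemma \ref{l:imprc}. As written, your argument neither proves the stated inequality (which needs qualification to be true) nor the weaker, actually needed one.
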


Therefore, considering $\rho_1 \in (8 k \pi, 8 (k+1) \pi)$ and $\rho_2 \in (8 l \pi, 8 (l+1) \pi)$, we deduce that on low sub-levels of the functional $\wtilde{I}_\rho$ at least one of the component of $\dis\left(\frac{he^u}{\int_\Sg he^udV_g},\frac{he^{-u}}{\int_\Sg he^{-u}dV_g}\right)$ has to be very close to the sets of $k$- or $l$- barycenters over $\Sg$, respectively, see Proposition \ref{p:altern} for details. It is then possible to construct a continuous map
$$
\wtilde{\Psi} : \wtilde{I}_\rho^{-L} \to (\g_1)_k * (\g_2)_l
$$
with $L$ sufficiently large, such that the composition
$$
(\g_1)_k * (\g_2)_l \quad \stackrel{\wtilde{\Phi}_{\l}}{\longrightarrow}\quad
\wtilde{I}_{\rho}^{-L} \quad\stackrel{\wtilde{\Psi}}{\longrightarrow}\quad (\g_1)_k * (\g_2)_l
$$
is homotopically equivalent to the identity map on $(\g_1)_k *
(\g_2)_l$ provided that $\l$ is large enough. $\wtilde{\Psi}$ is defined as in \eqref{eq:psi}, where basically $e^{u_2}$ is replaced by $e^{-u}$:
$$
  \wtilde{\Psi}(u) = (1 - \wtilde r) (\Pi_1)_* \psi_k \left(\frac{he^{u}}{\int_\Sg he^{u}dV_g}\right)+  \wtilde r  (\Pi_2)_* \psi_l \left(\frac{he^{-u}}{\int_\Sg he^{-u}dV_g}\right).
$$
With this at hand we argue as in Section 5 introducing a min-max scheme based on the set $(\g_1)_k *
(\g_2)_l$. Allowing $(\rho_1, \rho_2)$ to vary in a compact set of $(8 k \pi, 8 (k+1) \pi) \times (8 l \pi, 8 (l+1) \pi)$ we obtain a sequence of solutions $(u_n)_n$ corresponding to $(\rho_{1,n}, \rho_{2,n}) \to (\rho_1, \rho_2)$. We finally get a solution for $(\rho_1, \rho_2)$ by applying the compactness result in Theorem \ref{t:cmpt}.

\section{Appendix: On the CW structure of Barycenter Spaces, by Sadok Kallel}

\noindent In this appendix we show that barycenter spaces of CW-complexes
are again CW. The notation of this appendix is independent of the rest of the paper, and the proofs use arguments from algebraic topology.

\

We adopt the notation $\bar{n}$ for barycenter and
$\sj{n}$ for symmetric join, see \cite{kk}. We also need the notation
$$\Delta_{k-1} = \{(t_1,\ldots, t_k)\ \in [0,1]^k\ |\ \sum
t_i=1\}$$ for the $(k-1)$-dimensional complex. This we view as a
CW-complex with faces being subcomplexes. For $k<n$, we write as
$\Delta_{k-1}\hookrightarrow\Delta_{n-1}$ the standard face
inclusion given by adjoining trivial coordinate entries
$(t_1,\ldots, t_k)\mapsto (t_1,\ldots, t_k,0,\ldots, 0)$.
Similarly for based $X$, with basepoint $x_0$, we embed
$X^k\hookrightarrow X^n$ by adjoining basepoints.

\bpr\label{main} If $X$ is a based connected CW-complex, then
$\bar{n}(X)$ can be equipped with a CW structure so that all
vertical projections in the following diagram are cellular maps
and all horizontal maps are subcomplex inclusions
$$\xymatrix{\Delta_{k-1}\times X^k\ar@{^(->}[r]\ar[d]&\Delta_{n-1}\times X^n\ar[d]\\
\bar{k}(X)\ar@{^(->}[r]&\bar{n}(X). }$$ \epr

The proof uses standard facts about CW complexes which we now
review.

\begin{enumerate}
\item If $(X,A)$ is a relative CW complex, then the quotient space
$X/A$ is a CW complex with a vertex corresponding to $A$. \item
More generally if $A$ is a subcomplex of a CW complex $X, Y$ is a
CW complex, and $f: A\longrightarrow  Y$ is a cellular map, then
the {\em pushout} $Y\cup_fX$ has an induced CW complex structure that
contains $Y$ as a subcomplex and has one cell for each cell of $X$
that is not in $A$. We represent this construction by a diagram
    $$\xymatrix{A\ar@{^(->}[r]^i\ar[d]^f&X\ar[d]\\
    Y\ar[r]&X\cup_fY}$$
    with the understanding that all maps arriving at $X\cup_fY$ are cellular with respect to the induced cell structure there.
\item A finite group, or more generally a discrete group $G$ acts
{\em cellularly} on X means that: (i) if $\sigma$ is an open cell of
$X$ then $g\sigma$ is again an open cell in $X$ for all $g\in G$,
and (ii) if $g\in G$ fixes an open cell $\sigma$, that is $g\sigma
= \sigma$, then it fixes $\sigma$ pointwise (i.e. $gx=x$ for all
$x\in\sigma$). A CW-complex is a {\em cellular $G$-space} if $G$ acts
cellularly on $X$. If a finite group $G$ acts cellulary on $X$,
then $X/G$ is a CW-complex. Furthermore, if $f: X\longrightarrow
Y$ is a $G$-equivariant cellular map between cellular $G$-spaces,
then the induced map $X/G\longrightarrow  Y/G$ is cellular with
respect to the induced CW-structures.
\end{enumerate}

Properties (1) and (2) can be found in (\cite{may}, Chapter 10.2).
Property (3) follows from Prop. 1.15 and Ex. 1.17 of \cite{td}
(Chapter 2). Throughout we endow $X$ with a CW-structure so that
the permutation action of $\sn$ on $X^n$ is cellular,
and so that $x_0$ is a $0$-cell or vertex.\\

\noindent{\sl Proof of Proposition \ref{main}}. We recall the
definition of the barycenter spaces. Given $X$ a space, then its
$n$-th barycenter space is the quotient space
$$\bar{n}(X) := \coprod_{k=1}^n\Delta_{k-1}\times_{\mathfrak S_k} X^k/_{\sim}$$
where $\Delta_{k-1}\times_{\mathfrak S_k} X^k$ is the quotient of
$\Delta_{k-1}\times X^k$ by the symmetric group $\mathfrak S_k$
acting diagonally, and where $\sim$ is the equivalence relation
generated by:
\begin{eqnarray*}(i)
&&[t_1,\ldots, t_{i-1},0,t_{i+1},\ldots, t_n;x_1,\ldots,  x_i,\ldots x_n]\\
&&\sim [t_1,\ldots, t_{i-1},t_{i+1},\ldots, t_n;x_1,\ldots,\hat
x_i,\ldots x_n]
\end{eqnarray*}
(here $\hat x_i$ means the $i$-th entry has been suppressed), and
by
\begin{eqnarray*}(ii)
&&[t_1,\ldots, t_i,\ldots, t_j,\ldots, t_n;x_1,\ldots,x_i,\ldots , x_j,\ldots, x_n]\\
&&\sim [t_1,\ldots, t_{i-1}, t_i+t_j,t_{i+1},\ldots, \hat
t_j,\ldots, t_n; x_1,\ldots, x_i,\ldots, \hat x_j,\ldots ,x_n]\ \
\ \ \ \hbox{if}\ \ \ x_i=x_j.
\end{eqnarray*}
An intermediate construction is to consider the symmetric join
$\sj{n}(X)$ which is the quotient of
$\coprod_{k=1}^n\Delta_{k-1}\times_{\mathfrak S_k} X^k$ by the
equivalence relation $(i)$ only. There are quotient projections
$$\Delta_{n-1}\times X^n\longrightarrow \Delta_{n-1}\times_\sn X^n\longrightarrow \sj{n}(X)\longrightarrow \bar{n}(X)$$
and it is convenient to write an equivalence class in
$\Delta_{n-1}\times_\sn X^n$ or any of its images in $\sj{n}X$ and
$\bar{n}(X)$ by
$$\sum_{i=1}^n t_ix_i := [t_1,\ldots ,t_n;x_1,\ldots, x_n].$$
{\em Addition} means the sum is abelian and this reflects the
symmetric group action. The relation $(i)$ means the entry $0x_i$ is
suppressed, and relation $(ii)$ means that $t_ix+t_jx = (t_i+t_j)x$.

To show that $\bar{n}(X)$ is CW, we proceed by induction. When
$n=1$, $\bar{1}X=X$ so there is nothing to prove. For the general
case, write
$$\bar{n}X = \bar{n-1}X\cup \left(\Delta_{n-1}\times_\sn X^n\right)/_\sim$$
and write $X^n_{fat}\subset X^n$ the fat diagonal consisting of
all n-tuples $(x_1,\ldots, x_n)$ with $x_i=x_j$ for some $i\neq j$.
Denote by
$$W_n=\left(\partial\Delta_{n-1}\times_\sn X^n\right)\ \bigcup\ \left(\Delta_{n-1}\times_\sn X^n_{fat}\right)$$
the subspace of $\Delta_{n-1}\times_\sn X^n$ consisting of all
classes $\sum t_ix_i$ with $t_i=0$ for some $i$ or $x_i=x_j$ for
some $i\neq j$. Then $W_n$ is a CW subcomplex of $X^n$ because the
$\sn$-equivariant decomposition of $X^n$ can always be arranged so
that $\Delta_{fat}$ is a subcomplex. There is a well-defined
quotient map $f: W_n\longrightarrow \bar{n-1}$ sending
\begin{eqnarray*}\label{first}
\sum t_jx_j&\longmapsto& \sum_{j\neq i} t_jx_j\ \ \ \hbox{if}\ t_i=0\\
\sum t_jx_j&\longmapsto& t_1x_1+\cdots + (t_i+t_j)x_i + \cdots +
\widehat{t_jx_j} + \cdots + t_nx_n\ \ \hbox{if}\ x_i=x_j
\end{eqnarray*}
and we have the pushout diagram
\begin{equation*}(*)
\xymatrix{
W_n\ar@{^(->}[r]\ar[d]^f&\Delta_{n-1}\times_\sn X^n\ar[d]\\
\bar{n-1}X\ar[r]&\bar{n}(X). }\end{equation*} If we can show that
$f$ is cellular, then by property (2) and induction, $\bar{n}(X)$
will be CW as desired.

The map $f$ has two restrictions $f_1$ and $f_2$ on the pieces
$\partial\Delta_{n-1}\times_\sn X^n$ and $\Delta_{n-1}\times_\sn
X^n_{fat}\subset W_n$ respectively. To see that $f_1$ is cellular,
write $\partial\Delta_{n-1}$ as a union of faces
$F_i=\{(t_1,\ldots, t_n), t_i=0\}$ each homeomorphic to
$\Delta_{n-2}$. Write $X^n_i=\{(x_1,\ldots, x_n)\in X^n\ |\
x_i=x_0\}$ where $x_0\in X$ is the basepoint. The maps $F_i\times
X^n\longrightarrow  F_i\times X^n_i$, $(t_1,\ldots,
t_n;x_1,\ldots, x_n)\mapsto (t_1,\ldots, t_n;x_1,\ldots,
x_0,\ldots ,x_n)$; which for a given $i$ replaces $x_i$ by $x_0$,
are cellular and so is their union
$$\bigcup_i F_i\times X^n\longrightarrow  \bigcup_i F_i\times X^n_i.$$
This map is $\sn$-equivariant and so passes to a cellular map
between quotients
$$\xymatrix{\left(\bigcup_iF_i\times X^n\right)/_{\sn}\ar[r]\ar@{=}[d]&\left(\bigcup_i F_i\times X^n_i\right)/_{\sn}\ar@{=}[d]\\
\partial\Delta_{n-1}\times_\sn X^n\ar[r]^g&\Delta_{n-2}\times_{\mathfrak S_{n-1}}X^{n-1}.
}$$ The restriction $f_1$ is now the composite of cellular maps
$$\xymatrix{\partial\Delta_{n-1}\times_\sn X^n\ar[r]^g& \Delta_{n-2}\times_{\mathfrak S_{n-1}}X^{n-1}\ar[r]& \bar{n-1}(X)}$$
thus it is cellular. We proceed the same way for the restriction
$f_2$. Write $X^n_{fat} = \bigcup_{i<j} X^n_{ij}$ where $X^n_{ij}
= \{(x_1,\ldots, x_n)\in X^n\ |\ x_i=x_j, i<j\}$. Each $X^n_{ij}$
is identified with $X^{n-1}$. There are maps $\tau_{ij}:
\Delta_{n-1}\times X^n_{ij}\longrightarrow  F_i\times X^n_i$
sending
\begin{eqnarray*}
&&(t_1,\ldots, t_n,x_1,\ldots, x_n)\\
&&\longmapsto (t_1,\ldots, t_{i-1},0,t_{i+1},\ldots, t_{j-1},
t_i+t_j, t_{j+1},\ldots, t_n;x_1,\ldots,
x_{i-1},x_0,x_{i+1},\ldots ,x_n)
\end{eqnarray*}
which are cellular being the product of cellular maps (i.e it can
be checked that the map $\Delta_{n-1}\longrightarrow
\partial\Delta_{n-1}$ sending $(t_1,\ldots, t_n)\longrightarrow
(t_1,\ldots, t_{i-1},0,t_{i+1},\ldots, t_{j-1}, t_i+t_j,
t_{j+1},\ldots, t_n)$ sends faces to faces and hence is cellular).
The map $\bigcup\tau_{ij}$ is not $\sn$-equivariant, but the
composite
$$\xymatrix{\bigcup_{i<j}\Delta_{n-1}\times X^n_{ij}\ar[r]&\bigcup_i F_i\times X^n_i\ar[r]&\left(\bigcup_i F_i\times X^n_i\right)/_{\sn}}$$
factors through the $\sn$-quotient. More precisely, we have the
diagram
$$\xymatrix{\left(\bigcup_{i<j}\Delta_{n-1}\times X^n_{ij}\right)/_{\sn}\ar[r]\ar@{=}[d]&\left(\bigcup_i F_i\times X^n_i\right)/_{\sn}\ar@{=}[d]\\
\Delta_{n-1}\times_\sn
X^n_{fat}\ar[r]^\tau&\Delta_{n-2}\times_{\mathfrak S_{n-1}}X^{n-1}
\ar[r]&\bar{n-1}(X)}$$ with all maps in this diagram cellular. The
bottom composite $f_2$ must therefore be cellular.

 In conclusion, the map $f = f_1\cup f_2$ in the diagram (*) is cellular and this completes the proof.
\hfill$\Box$

\bex We take a special look at $\bar{2}(X)$. Consider $\sj{2}X$
which consists of elements of the form $t_1x + t_2y$ with
$t_1+t_2=1$ and the identification $0x+1y =y$. By using the order
on the $t_i$'s in $I=[0,1]$, this can be written as
\begin{eqnarray*}
\sj{2}(X) &=& \{(t_1,t_2,x_1,x_2)\ | \ t_1\leq t_2, t_1+t_2=1\}/_\sim\\
&=& J\times (X\times X)/_\sim
\end{eqnarray*}
where $J=\{0\leq t_1\leq t_2\leq 1, t_1+t_2=1\}$ is a copy of the
one-simplex, and the identification $\sim$ is such that
$(0,1,x,y)\sim (0,1,x',y)$ and $({1\over 2},{1\over 2}, x,y)\sim
({1\over 2},{1\over 2}, y,x)$. Note that $(0,1)$ and $({1\over
2},{1\over 2})$ are precisely the faces or endpoints of $J$. This
is saying that $\sj{2}X$ is precisely the double mapping cylinder
$$\xymatrix{X^2\times \{(0,1)\}\sqcup X^2\times \{({1\over 2},{1\over 2})\}\ar@{^(->}[r]\ar[d]^{p_2\sqcup\pi}&X^2\times J\ar[d]\\
X\sqcup \hbox{SP}^{2}X\ar[r]&\sj{2}X }$$ where $p_2$ is the
projection onto the second factor $X^2\longrightarrow  X$, and
$\pi$ is the ${\mathbb Z}_2$-quotient map $X^2\longrightarrow
\hbox{SP}^{2}X$ (see \cite{kk}). Both maps $p_2$ and $\pi$ are
cellular (Property (3)). This gives $\sj{2}(X)$ a CW-structure
according to property (3).  We can now consider the pushout
diagram
\begin{equation}\label{bar2}
\xymatrix{J\times X\ar[r]\ar[d]&\sj{2}X\ar[d]\\
X\ar[r]&\bar{2}X}
\end{equation}
where the left vertical map $J\times X\longrightarrow  X$ is
projection hence cellular, while the top map $J\times
X\longrightarrow \sj{2}X$, $((t_1,t_2),x)\mapsto t_1x+t_2x$, is a
subcomplex inclusion. By property (2), $\bar{2}(X)$ is CW. \eex

\bibliography{BJMR3}
\bibliographystyle{plain}

\end{document}